\documentclass[final,reqno,onefignum,onetabnum]{siamltex}

\usepackage{amsmath}
\usepackage{amssymb}
\usepackage{amsfonts}
\usepackage{graphicx}
\usepackage{color}
\usepackage{mathptmx} % assumes new font selection scheme installed
\usepackage{times}
\usepackage{datetime}

\setlength{\textwidth}{150truemm}
\setlength{\oddsidemargin}{4truemm}
\setlength{\evensidemargin}{4truemm}
\setlength{\textheight}{230truemm}
\setlength{\topmargin}{-5truemm}

\def\Sp{\mathrm{Sp}}       % matrix trace
\def\vect{\mathrm{vec}}
\def\vech{\mathrm{vech}}

\def\bTheta{\mathbfit{\Theta}}   % vectorization of matrices
\def\bitDelta{\mathbfit{\Delta}}
\def\bitPhi{\mathbfit{\Phi}}
\def\bitR{\mathbfit{R}}   % vectorization of matrices

%%%%%%%%%%%%%%%%%%%%%%%%%%%%%%%%%%%%%%%%%%%%%%%%%%%%%%%%%%%%%%%%%%%%%%%%%%%%%%%
\def\<{\leqslant}           % nice less than or equal to sign
\def\>{\geqslant}           % nice larger than or equal to sign
         % divergence
           % nice less than or equal to sign

%%%%%%%%%%%%%%%%%%%%%%%%%%%%%%%%%%%%%%%%%%%%%%%%%%%%%%%%%%%%%%%%%%%%%%%%%%%%%%%
\def\d{\partial}
\def\wh{\widehat}
\def\wt{\widetilde}

\def\Re{\mathrm{Re}}   % real part
\def\Im{\mathrm{Im}}   % imaginary part

%%%%%%%%%%%%%%%%%%%%%%%%%%%%%%%%%%%%%%%%%%%%%%%%%%%%%%%%%%%%%%%%%%%%%%%%%%%%%%%
   % vectorization of matrices
\def\cH{\mathcal{H}}   % Hardy space
\def\mA{\mathbb{A}}    % space of real antisymmetric matrices
    % class of controllers
    % set of integers
    % set of positive integers
\def\mR{\mathbb{R}}    % real line
\def\mC{\mathbb{C}}    % complex plane

%%%%%%%%%%%%%%%%%%%%%%%%%%%%%%%%%%%%%%%%%%%%%%%%%%%%%%%%%%%%%%%%%%%%%%%%%%%%%%%
       % probability law
       % probability law
\def\Tr{\mathrm{Tr}}       % matrix trace
\def\rT{\mathrm{T}}        % matrix transpose
        % matrix transpose
       % matrix trace
        % matrix transpose
%%%%%%%%%%%%%%%%%%%%%%%%%%%%%%%%%%%%%%%%%%%%%%%%%%%%%%%%%%%%%%%%%%%%%%%%%%%%%%%
\def\bS{\mathbf{S}}

\def\bitSigma{\mathbfit{\Sigma}}   % vectorization of matrices

%\def\bOmega{{\bf \Omega}}
%\def\sGamma{{\bf \Gamma}}

    % probability
\def\bE{\mathbf{E}}    % expectation

    % cumulant

%%%%%%%%%%%%%%%%%%%%%%%%%%%%%%%%%%%%%%%%%%%%%%%%%%%%%%%%%%%%%%%%%%%%%%%%%%%%%%%
\def\[[[{[\![\![}
\def\]]]{]\!]\!]}

\def\bra{\langle}
\def\ket{\rangle}

\def\Bra{\left\langle}
\def\Ket{\right\rangle}

%%%%%%%%%%%%%%%%%%%%%%%%%%%%%%%%%%%%%%%%%%%%%%%%%%%%%%%%%%%%%%%%%%%%%%%%%%%%%%%

\def\re{\mathrm{e}}        % number e
\def\rd{\mathrm{d}}        % differential

%%%%%%%%%%%%%%%%%%%%%%%%%%%%%%%%%%%%%%%%%%%%%%%%%%%%%%%%%%%%%%%%%%%%%%%%%%%%%%%

\def\cL{\mathcal{L}}

%%%%%%%%%%%%%%%%%%%%%%%%%%%%%%%%%%%%%%%%%%%%%%%%%%%%%%%%%%%%%%%%%%%%%%%%%%%%%%%

\def\bA{\mathbf{A}}

\def\bD{\mathbf{D}}

\def\bJ{\mathbf{J}}

\def\x{\times}
\def\ox{\otimes}
\def\op{\oplus}

\def\fB{\mathfrak{B}}

\def\fE{\mathfrak{E}}
\def\fF{\mathfrak{F}}

\def\fH{\mathfrak{H}}

\def\fS{\mathfrak{S}}
\def\fU{\mathfrak{U}}
\def\fR{\mathfrak{R}}
\def\fP{\mathfrak{P}}

\def\fZ{\mathfrak{Z}}

\def\bT{\mathbf{T}}

\def\cF{\mathcal{F}}
\def\cW{\mathcal{W}}

\def\cX{\mathcal{X}}
\def\fX{\mathfrak{X}}

\def\cD{\mathcal{D}}

\def\cM{\mathcal{M}}

\def\cC{\mathcal{C}}
\def\cR{\mathcal{R}}

\def\sP{\mathsf{P}}

\def\sR{\mathsf{R}}

\def\cG{\mathcal{G}}
\def\cI{\mathcal{I}}
\def\cP{\mathcal{P}}
\def\cQ{\mathcal{Q}}

\def\cY{\mathcal{Y}}
\def\cA{\mathcal{A}}
\def\cB{\mathcal{B}}

\def\cov{\mathbf{cov}}

\def\cN{\mathcal{N}}

\def\bL{\mathbf{L}}

\def\mS{\mathbb{S}}

\def\eps{\epsilon}
\def\Ups{\Upsilon}
\def\ups{\upsilon}

\def\diag{\mathop{\mathrm{diag}}}    % diagonal matrix
    % diagonal matrix

\DeclareMathAlphabet{\mathbfit}{OML}{cmm}{b}{it}
\title{EFFECTS OF PARAMETRIC UNCERTAINTIES IN CASCADED OPEN QUANTUM HARMONIC OSCILLATORS AND ROBUST GENERATION OF GAUSSIAN INVARIANT STATES\thanks{This work is supported by the Air Force Office of Scientific Research (AFOSR) under agreement number FA2386-16-1-4065.
}}

%%%%%%%%%%%%%%%%%%%%%%%%%%%%%%%%%%%%%%%%%%%%%%%%%%%%%%%%%%%%%%%%%%%%%%%%%%%%%%%
\author{Igor G. Vladimirov$^{\dagger}$, \qquad Ian R. Petersen$^{\dagger}$, \qquad Matthew R. James\thanks{College of Engineering and Computer Science, Australian National University, Canberra, ACT 2601, Australia, E-mail: {\tt\small igor.g.vladimirov@gmail.com, i.r.petersen@gmail.com, matthew.james@anu.edu.au}}}

\begin{document}
\maketitle

\begin{abstract}
This paper is concerned with the generation of Gaussian invariant states in cascades of open quantum harmonic oscillators governed by linear quantum stochastic differential equations.
We carry out infinitesimal perturbation analysis of the covariance matrix for the invariant Gaussian state of such a system and the related purity functional subject to inaccuracies in the energy and coupling matrices of the subsystems. This leads to the problem of balancing the state-space realizations of the component oscillators through symplectic similarity transformations    in order to minimize the mean square sensitivity of the purity functional to small random perturbations of the parameters.
This results in a quadratic optimization problem with an effective solution in the case of cascaded one-mode oscillators, which is demonstrated by a numerical example. We also discuss a connection of the sensitivity index with classical statistical distances 
and outline infinitesimal  perturbation analysis for translation invariant cascades of identical oscillators. The findings of the paper are applicable to robust state generation in quantum stochastic networks.
\end{abstract}

%%%%%%%%%%%%%%%%%%%%%%%%%%%%%%%%%%%%%%%%%%%%%%%%%%%%%%%%%%%%%%%%%%%%%%%%%%%%%%%%%%%%%%%%%%%%%%%%%%%

\begin{keywords}
Linear quantum stochastic system,
Gaussian invariant state,
purity functional,
Fisher information distance,
perturbation analysis,
balanced realization.
\end{keywords}

\begin{AMS}
81S22, % Open systems, reduced dynamics, master equations, decoherence
81S25, % Quantum stochastic calculus
81P16, % Quantum state spaces, operational and probabilistic concepts
81Q15, % Perturbation theories for operators and differential equations
94A17,      % Measures of information, entropy
93E15,  	% Stochastic stability
49L20,  	% Dynamic programming method
60G15,   	% Gaussian processes
93B35,      %Sensitivity (robustness)
93B51.      % Design techniques (robust design, computer-aided design, etc.)
\end{AMS}

%%%%%%%%%%%%%%%%%%%%%%%%%%%%%%%%%%%%%%%%%%%%%%%%%%%%%%%%%%%%%%%%%%%%%%%%%%%%%%%%%%%%%%%%%%%%%%%%%%%

\pagestyle{myheadings}
\thispagestyle{plain}
\markboth{IGOR G. VLADIMIROV,\quad IAN R. PETERSEN,\quad MATTHEW R. JAMES}{BALANCING CASCADED QUANTUM HARMONIC OSCILLATORS FOR GAUSSIAN STATE GENERATION}

\thispagestyle{empty}

%%%%%%%%%%%%%%%%%%%%%%%%%%%%%%%%%%%%%%%%%%%%%%%%%%%%%%%%%%%%%%%%%%%%%%%%%%%%%%%%%%%%%%%%%%%%%%%%%%%
\section{Introduction}\label{sec:intro}
%%%%%%%%%%%%%%%%%%%%%%%%%%%%%%%%%%%%%%%%%%%%%%%%%%%%%%%%%%%%%%%%%%%%%%%%%%%%%%%%%%%%%%%%%%%%%%%%%%%

The present paper is concerned with robustness of  state generation in a class of open quantum systems with respect to unavoidable uncertainties which accompany practical implementation of such systems. This issue is important in the context of the emerging quantum information  and quantum computation  technologies which exploit the potential resources of physical systems at atomic scales described by
quantum mechanics. Compared to classical systems with real-valued state variables  whose evolution obeys the laws of  Newtonian mechanics, quantum systems have more complicated operator-valued dynamic variables  which act on a Hilbert space and evolve according to unitary similarity transformations. This unitary evolution is specified by an operator-valued Hamiltonian which quantifies the self-energy of the system when it is isolated from the environment.  Interaction with a classical measuring device modifies the internal state of the quantum system in a random fashion,  which depends on the quantum observable being measured and makes noncommuting operators inaccessible to simultaneous measurement \cite{H_2001,M_1998,S_1994}. The inherently stochastic nature of quantum systems is reflected in their quantum probability theoretic description \cite{BVJ_2007,M_1995,P_1992} which replaces scalar-valued classical probability measures with density operators (quantum states) acting on the same Hilbert space as the dynamic variables. Regardless of whether measurements are involved, this probabilistic description is more complicated in the case of interaction between several open quantum systems, especially if one of them is organised  as an infinite reservoir of ``elementary'' systems representing a quantum field.

A unified theoretic framework for the modelling  and analysis of open quantum systems interacting with external bosonic fields (such as nonclassical light) is provided by the Hudson-Parthasarathy quantum stochastic calculus \cite{HP_1984,P_1992} (see also \cite{H_1991}). This approach
represents the Heisenberg picture evolution of system operators in the form of quantum stochastic differential equations (QSDEs) driven by noncommutative counterparts of the classical Wiener process \cite{KS_1991}. Reflecting quantized energy exchange, the quantum Wiener processes involve  annihilation and creation operators acting on symmetric Fock spaces \cite{PS_1972}. The energetics of the quantum system itself and its interaction with the external fields, which specifies the structure of the QSDEs, is captured by the system Hamiltonian, system-field coupling operators and the scattering matrix (which pertains to the photon exchange between the fields in multichannel settings). In combination with  the theory of quantum feedback networks \cite{GJ_2009,JG_2010}, this approach  allows for the modelling of  a wide class of interconnected open quantum systems which interact with one another and the environment. In fact, coherent (measurement-free) quantum control and filtering for quantum systems by direct or field-mediated interconnection \cite{JNP_2008,MJ_2012,MP_2009,NJP_2009,P_2014,PH_2015,VP_2013a,VP_2013b,VP_2016,ZJ_2011b}  constitute a promising modern paradigm which can potentially outperform the traditional observation-actuation approach of classical control theory. The principal advantage of this paradigm is that it avoids the loss of quantum information which accompanies the conversion of operator-valued quantum variables into classical real-valued signals in the process of measurement \cite{GZ_2004,WM_2010}.

The interaction of an open quantum system with external fields can be arranged in a dissipative fashion so that the resulting quantum stochastic system has an invariant state with desired properties. This provides an alternative approach \cite{Y_2012,Y_2009}  to the system state preparation which  can otherwise be carried out by steering the system to the required state through varying the parameters of the Hamiltonian  in an open-loop fashion or by using feedback \cite{DP_2010}, similarly to the classical terminal state control problem \cite{PBGM_1962,SW_1997}. Since quantum state generation via dissipation does not involve measurement-based feedback, it is more aligned with the coherent quantum control paradigm mentioned above. State generation is important for quantum computation  protocols \cite{NC_2000}, which often need the quantum system of interest  to be initialized  in a certain class of states.

Engineering a quantum state with required properties is relevant both for finite-level systems (such as qubit registers) and quantum systems with continuous variables which find applications in quantum optical platforms of quantum computing and quantum information processing \cite{NC_2000,WPGGCRSL_2012}. An important class of such systems is constituted by open quantum harmonic oscillators (OQHOs) whose dynamic variables satisfy the canonical commutation relations (CCRs), similar to those of the quantum mechanical positions and momenta \cite{M_1998,S_1994}, and    are governed by linear QSDEs \cite{P_2017}. The linearity of these QSDEs comes from the CCRs and the fact that the Hamiltonian and coupling operators of the OQHO are quadratic and linear functions of the system variables. The dynamics of such oscillators interacting with bosonic fields in the vacuum state  are, in many respects,  similar to those of classical Gaussian Markov diffusion processes (including the Ornstein-Uhlenbeck process \cite{KS_1991}) and are particularly tractable at the level of the first two moments of the system variables. For example, the linear dynamics preserve the Gaussian nature \cite{KRP_2010,PS_2015}  of the system state  in time, provided the OQHO is initialized in such a state. Moreover, irrespective of whether the initial state is Gaussian, linear QSDE with a Hurwitz matrix ensures the weak convergence \cite{B_1968,CH_1971}  of the system state to a unique invariant Gaussian state. It is this property of OQHOs that enables them to be employed for generating  Gaussian quantum states by allowing the system to evolve over a sufficiently long period of time \cite{MWPY_2014,Y_2012}. The dissipation, which is built in this state generation procedure, secures stability of the invariant state being achieved in the long run \cite{PAMGUJ_2014}.

However, the practical realization of OQHOs, which involves quantum optical components such as cavities, beam splitters and phase shifters, is accompanied by modelling inaccuracies and implementation errors. Even if the energetics of such a system remains linear-quadratic, the energy and coupling matrices can deviate from their nominal values. The resulting Gaussian invariant states can have covariance matrices  which differ from the theoretical predictions. These deviations can lead to a deterioration or loss of important properties of quantum states such as purity \cite{SSM_1988}, which can be critical due to the role of the pure states as extreme points of the convex set of density operators. This gives rise to the issue of robustness of the  Gaussian state generation in linear quantum stochastic systems.

This circle of problems is the main theme of the present paper, which is concerned with Gaussian invariant states for cascaded OQHOs driven by vacuum fields. Such chain-like networks of linear quantum stochastic systems are relatively easy to implement and find applications, for example, in the generation of pure states \cite{MWPY_2014,Y_2012}. Since the transfer functions of the component oscillators satisfy the physical realizability conditions \cite{JNP_2008,SP_2012}, their $\cH_{\infty}$-norms are not less than one. This can potentially enhance the propagation of perturbations in the energy and coupling matrices of the subsystems over long cascades. To this  end, we carry out an infinitesimal perturbation analysis of the covariance matrix for the invariant Gaussian state and the related  purity functional subject to inaccuracies in the energy and coupling matrices of the component oscillators. Since these quantities depend in a ``spatially  causal'' fashion  on the energy and coupling matrices of the oscillators along the cascade, their Frechet derivatives with respect to the matrix-valued parameters  are amenable to recursive computation. This computation employs a connection with conditional covariance matrices of auxiliary classical Gaussian random vectors (which are updated similarly to the covariance equations of the discrete-time Kalman filter) and variational techniques for solutions of algebraic Lyapunov and Sylvester equations \cite{SIG_1998,VP_2013a}.

By modelling the parametric uncertainties in the component oscillators as small zero-mean classical random elements which are uncorrelated for different subsystems, the corresponding perturbations in the purity functional are quantified by a mean square sensitivity index which  depends on a particular state-space realization of the system.
This leads to the problem of balancing the subsystems (for example, one-mode oscillators) by using CCR-preserving symplectic similarity transformations  of their realizations in order to minimize the sensitivity of the purity functional to such uncertainties. By using group theoretic properties of Lyapunov equations, we obtain  a closed-form dependence of the mean square sensitivity index on the transformation matrices, which allows its minimization to be decomposed into independent optimization problems for the subsystems. Each of these lower dimensional   problems is organised as the minimization of a quartic polynomial of the corresponding symplectic matrix, which, in the one-mode case,  reduces to a quadratic optimization problem with an effective solution. We demonstrate this result by a numerical example of balancing a cascade of  one-mode oscillators.

The specific choice of the optimality criterion for balancing the cascaded oscillators is not unique and can be based on different cost functionals. To this end, we also discuss a connection of the above criterion with the classical Fisher information distance \cite{S_1984} applied to Gaussian quantum  states. Note that balanced realizations, developed previously for classical linear systems and their quantum counterparts \cite{N_2013}, were mainly concerned with equating the controllability and observability Gramians,  which comes from the Kalman duality principle. However, the quantum setting of the present paper employs different criteria pertaining to the infinitesimal perturbation analysis of the state generation. In the case of translation invariant cascades of identical oscillators,
the propagation of perturbations in subsystems over such cascades is particularly amenable to analysis using the technique of spatial $z$-transforms \cite{VP_2014} (see also \cite{SVP_2015a,SVP_2015b}) which is also considered in the present study.
The results of this paper may also find applications to perturbation analysis and robust Gaussian  state generation in linear quantum stochastic networks with more complicated architectures.

The paper is organised as follows.
Section~\ref{sec:LQSS} provides a background material on quantum stochastic  systems.
Section~\ref{sec:sys} describes the class of cascaded linear quantum stochastic systems under consideration.
Section~\ref{sec:inv} specifies the invariant Gaussian quantum state for the composite system and the purity functional.
Section~\ref{sec:reccov}  provides recurrence equations for the steady-state covariances using the cascade structure of the system.
Section~\ref{sec:diff} carries out an infinitesimal perturbation analysis of the purity functional with respect to the energy and coupling matrices.
Section~\ref{sec:recder} describes a recursive computation of the appropriate Frechet derivatives using the cascade structure of the system.
Section~\ref{sec:opt} quantifies mean square sensitivity of the purity functional with respect to random implementation errors as an optimality criterion and splits its minimization into independent problems.
Section~\ref{sec:example} provides an illustrative numerical example of balancing a cascade of one-mode oscillators.
Section~\ref{sec:conc} makes concluding remarks.
The appendices provide an additional material for completeness.
Appendix~\ref{sec:relent} discusses a connection between the sensitivity index for the purity functional and the Fisher information distance applied to Gaussian states.
Appendix~\ref{sec:trans} carries out an infinitesimal perturbation analysis of steady-state covariances for translation invariant cascades of identical oscillators.

%%%%%%%%%%%%%%%%%%%%%%%%%%%%%%%%%%%%%%%%%%%%%%%%%%%%%%%%%%%%%%%%%%%%%%%%%%%%%%%%%%%%%%%%%%%%%%%%%%%
\section{Linear quantum stochastic systems}
\label{sec:LQSS}
%%%%%%%%%%%%%%%%%%%%%%%%%%%%%%%%%%%%%%%%%%%%%%%%%%%%%%%%%%%%%%%%%%%%%%%%%%%%%%%%%%%%%%%%%%%%%%%%%%%

For the purposes of the subsequent sections, we will provide a background on a class of quantum stochastic systems, including open quantum harmonic oscillators which play the role of building blocks in linear quantum systems theory \cite{P_2017}.
The evolution of such a system in time $t\>0$ is described in terms of an even number $n$ of dynamic variables $x_1(t),\ldots, x_n(t)$ assembled into a vector
\begin{equation}
\label{Xx}
    X
    :=
    \begin{bmatrix}
    x_1\\
    \vdots\\
    x_n
    \end{bmatrix}
\end{equation}
(vectors are organised as columns unless indicated otherwise, and the time arguments are often omitted for brevity). These system variables are time-varying self-adjoint operators\footnote{there also are alternative formulations which employ non-Hermitian operators such as the annihilation and creation operators.} on a complex separable Hilbert space $\fH$ (whose structure is clarified below), with $x_1(0), \ldots, x_n(0)$ acting  on a  subspace $\fH_0$ which is referred to as the initial system space.  Their dynamics are governed by a Markovian Hudson-Parthasarathy quantum stochastic differential equation (QSDE) \cite{HP_1984,P_1992}
\begin{equation}
\label{dX}
    \rd X =
    \cG(X)\rd t  - i[X,L^{\rT}]\rd W
\end{equation}
whose structure is described below.
Although it formally resembles classical SDEs \cite{KS_1991}, the QSDE (\ref{dX}) is driven by a vector
\begin{equation}
\label{W}
    W
    :=
    \begin{bmatrix}
        w_1\\
        \vdots
        \\
        w_m
    \end{bmatrix}
\end{equation}
of an even number $m$ of self-adjoint operator-valued quantum Wiener processes $w_1, \ldots, w_m$ acting on a symmetric Fock space $\fF$. These  processes model the external bosonic fields \cite{H_1991,HP_1984,P_1992}, interacting with the system (see Fig.~\ref{fig:sys}).
%==============================================================================
\begin{figure}[htbp]
\centering
\unitlength=0.9mm
\linethickness{0.2pt}
\begin{picture}(100.00,45.00)
    \put(40,20){\framebox(20,20)[cc]{system}}
    \put(20,38){\vector(1,0){20}}
    \put(20,33){\vector(1,0){20}}
    \put(20,22){\vector(1,0){20}}
    \put(30,29){\makebox(0,0)[cc]{{$\vdots$}}}
    \put(15,29){\makebox(0,0)[cc]{{$\vdots$}}}

    \put(15,38){\makebox(0,0)[cc]{$w_1$}}
    \put(15,33){\makebox(0,0)[cc]{$w_2$}}
    \put(15,22){\makebox(0,0)[cc]{$w_m$}}

    \put(60,38){\vector(1,0){20}}
    \put(60,33){\vector(1,0){20}}
    \put(60,22){\vector(1,0){20}}
    \put(70,29){\makebox(0,0)[cc]{{$\vdots$}}}
    \put(85,29){\makebox(0,0)[cc]{{$\vdots$}}}

    \put(85,38){\makebox(0,0)[cc]{$y_1$}}
    \put(85,33){\makebox(0,0)[cc]{$y_2$}}
    \put(85,22){\makebox(0,0)[cc]{$y_m$}}

\end{picture}\vskip-15mm
\caption{A schematic depiction of an open quantum stochastic system which interacts with the input quantum Wiener processes $w_1, \ldots, w_m$ and produces the output fields  $y_1, \ldots, y_m$.}
\label{fig:sys}
\end{figure}
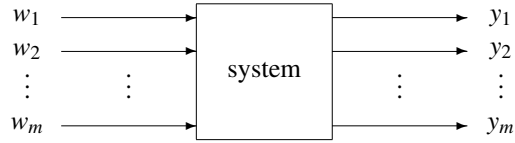
%==============================================================================
Accordingly, the above space $\fH$ is organised as the tensor-product Hilbert space $\fH:= \fH_0 \ox \fF$ which provides a common domain for the system and field operators. The quantum Wiener processes in (\ref{W}) satisfy the quantum Ito relations
\begin{equation}
\label{dWdW}
    \rd W\rd W^{\rT}
    :=
    (\rd w_j\rd w_k)_{1\<j,k\< m}
    =
    \Omega \rd t,
\end{equation}
where
\begin{align}
\label{Omega}
        \Omega
    &
    :=
    I_m + iJ,\\
\label{J}
        J
        & :=
       \begin{bmatrix}
           0 & I_{m/2}\\
           -I_{m/2} & 0
       \end{bmatrix}
       =
        \bJ \ox I_{m/2},\\
\label{bJ}
    \bJ
    & :=
    \begin{bmatrix}
        0 & 1\\
        -1 & 0
    \end{bmatrix}.
\end{align}
Here, $i:= \sqrt{-1}$ is the imaginary unit, $\ox$ denotes the Kronecker product of matrices, $I_r$ is the identity matrix of order $r$ (which will often be omitted when it is clear from the context), and $\bJ$
spans the space of antisymmetric matrices of order $2$.
Also, the transpose $(\cdot)^{\rT}$ acts on matrices of operators as if their entries were scalars.
In contrast to the identity diffusion matrix of the standard Wiener process, $\Omega$ in (\ref{Omega}) is  a complex positive semi-definite Hermitian matrix with an orthogonal antisymmetric imaginary part   $\Im \Omega=J$ (so that $J^2 =-I_m$). In view of (\ref{dWdW}),  the quantum Wiener processes  $w_1, \ldots, w_m$ do not commute with each other. Furthermore,  their two-point commutator matrix is given by
\begin{align}
\nonumber
    [W(s), W(t)^{\rT}]
    & :=
    ([w_j(s), w_k(t)])_{1\<j,k\<m}\\
\label{WWst}
    & = 2i\min(s,t)J ,
    \qquad
    s,t\>0,
\end{align}
where $[\alpha, \beta]:= \alpha \beta - \beta \alpha$ is the commutator of linear operators $\alpha$ and $\beta$. In accordance with $W$ representing the $m$-channel input field, the vector
\begin{equation}
\label{Lell}
    L
    :=
    \begin{bmatrix}
        \ell_1\\
        \vdots\\
        \ell_m
    \end{bmatrix}
\end{equation}
in (\ref{dX}) consists of system-field coupling operators $\ell_1, \ldots, \ell_m$ which are also self-adjoint operators on the space $\fH$. Since the entries of the commutator matrix
$$
    [X, L^{\rT}]
    :=    ([x_j,\ell_k])_{1\< j\< n,1\<k\< m}
$$
are skew-Hermitian operators, the dispersion $(n\x m)$-matrix $-i [X, L^{\rT}]$ in (\ref{dX}) consists of self-adjoint operators on $\fH$.
The $n$-dimensional
drift vector
$    \cG(X)
$
of the QSDE (\ref{dX}) is obtained by the entrywise application of
the Gorini-Kossakowski-Sudar\-shan-Lindblad (GKSL) generator    \cite{GKS_1976,L_1976}, which acts on a system operator $\xi$ (a function of the system variables)  as
\begin{equation}
\label{GKSL}
   \cG(\xi)
   := i[H,\xi]
     +    \frac{1}{2}
    \sum_{j,k=1}^m
    \omega_{jk}
    \big( [\ell_j,\xi]\ell_k + \ell_j[\xi,\ell_k]\big),
\end{equation}
where $\omega_{jk}$ are the entries of the quantum Ito matrix $\Omega$ from (\ref{Omega}).
Here, $H$ denotes
the system Hamiltonian which is also a self-adjoint operator on $\fH$.  The energy operators $H$ and $\ell_1, \ldots, \ell_m$ are functions (for example, polynomials) of the system variables $x_1, \ldots, x_n$ and inherit dependence on time from them.
The GKSL superoperator $\cG$ in (\ref{GKSL})  is a quantum analogue of the infinitesimal generators of classical Markov diffusion processes \cite{KS_1991,S_2008} and specifies the drift of the QSDE
\begin{equation}
\label{dxi}
    \rd \xi
    =
    \cG(\xi)\rd t - i[\xi,L^{\rT} ]\rd W.
\end{equation}
The specific structure of the drift and diffusion terms in (\ref{dxi}) (and its particular case (\ref{dX})) comes from  the evolution
\begin{equation}
\label{xiuni}
    \xi(t)
    =
    U(t)^{\dagger} (\xi(0)\ox \cI_{\fF}) U(t)
\end{equation}
of the system operator $\xi$, with $\xi(0)$ acting on the initial system space $\fH_0$.  Here, $(\cdot)^{\dagger}$ denotes the operator adjoint, and $U(t)$ is a unitary operator which acts on the system-field space $\fH$ and is governed by the QSDE
\begin{equation}
\label{dU}
    \rd U(t)
    =
    -U(t) \Big(i(H(t)\rd t + L(t)^{\rT} \rd W(t)) + \frac{1}{2}L(t)^{\rT}\Omega L(t)\rd t\Big),
\end{equation}
where $U(0)=\cI_{\fH}$  is the identity operator on $\fH$. The operator $U(t)$, which  is associated with the system-field interaction over the time interval from $0$ to $t$, is adapted in the sense that  it acts effectively on the subspace $\fH_0\ox \fF_t$, where $\{\fF_t:\, t\>0\}$ is the Fock space filtration.

The QSDE (\ref{dxi}) can be  obtained from (\ref{xiuni}) and (\ref{dU}) by using the quantum Ito formula \cite{HP_1984,P_1992} in combination with (\ref{Omega}), unitarity of $U$ and commutativity between the forward Ito increments $\rd W(t)$ and adapted processes (including $U$) taken at time $s\< t$.
The corresponding quantum stochastic flow at time $t$ involves a unitary similarity transformation which acts  on operators $\zeta$ on the space $\fH$  as $ \zeta\mapsto U(t)^{\dagger} \zeta U(t)$ and applies entrywise to vectors of such operators. In accordance with (\ref{xiuni}),  the system variables evolve as
\begin{equation}
\label{uni}
    X(t)
    =
    U(t)^{\dagger} (X(0)\ox \cI_{\fF}) U(t).
\end{equation}
The system-field interaction, which drives the unitary operator $U(t)$ in (\ref{dU}), produces the output fields $y_1, \ldots, y_m$ (see Fig.~\ref{fig:sys}) which are also time-varying self-adjoint operators on the space $\fH$. The output field vector evolves as
\begin{equation}
\label{Y}
    Y(t)
    :=
    \begin{bmatrix}
        y_1(t)\\
        \vdots\\
        y_m(t)
    \end{bmatrix}
    =
 U(t)^{\dagger}(\cI_{\fH_0}\ox W(t))U(t)
\end{equation}
and satisfies the QSDE
\begin{equation}
\label{dY}
  \rd Y = 2JL\rd t + \rd W
\end{equation}
which is obtained similarly to (\ref{dxi}).
The QSDEs (\ref{dX}) and (\ref{dY}) describe a particular yet important scenario of quantum stochastic dynamics with the identity scattering matrix, which corresponds to the absence of photon exchange between the fields  \cite{HP_1984,P_1992}. Endowed with additional features (for example, more general scattering matrices and related gauge processes which affect the dynamics of the unitary operator $U$ in (\ref{dU})),
such QSDEs are employed in the formalism for modelling feedback networks of quantum systems which  interact with each other and the external fields \cite{GJ_2009,JG_2010}. 
Irrespective of a particular form of (\ref{dU}),  the unitary similarity transformation in (\ref{uni}) and (\ref{Y}) preserves the commutativity between the system and output field variables in the sense that
\begin{equation}
\label{XY}
        [X(t),Y(s)^{\rT}]
     =
    0,
    \qquad
    t\> s\> 0
\end{equation}
(that is, future system variables commute with the  past output variables).
At the same time, the output variables $y_1, \ldots, y_m$ do not commute with each other
and, in view of (\ref{WWst}) and (\ref{dY}),
inherit from the input fields the two-point commutator matrix:
$$
    [Y(s), Y(t)^{\rT}] = 2i\min(s,t)J,
    \qquad
    s,t\>0.
$$
This noncommutativity makes the output fields inaccessible to simultaneous measurement \cite{H_2001,M_1998,S_1994}. However, they can be fed in a measurement-free fashion as an input to other open quantum systems. 
The resulting  coherent  field-mediated    interconnection gives rise to fully quantum communication channels in the form of cascades of (possibly distant) quantum systems. In Section~\ref{sec:sys}, we will consider such connections of linear quantum stochastic systems which are used, in particular, for the generation of certain classes of quantum states (see, for example, \cite{MWPY_2014}).

The open quantum system, described above, is referred to as a linear quantum stochastic system, or an open quantum harmonic oscillator (OQHO) with $\frac{n}{2}$ modes, if its Hamiltonian and the coupling operators are quadratic and linear functions of the system variables, respectively, and the latter satisfy the following form of canonical commutation relations (CCRs). More precisely, the system variables $x_1, \ldots, x_n$ of the OQHO
%a linear quantum stochastic system, or an open quantum harmonic oscillator (OQHO),
satisfy the Weyl  CCRs
\begin{equation}
\label{CCR}
    \cW_u \cW_v = \re^{i v^{\rT}\Theta u} \cW_{u+v}
\end{equation}
for all $u,v\in \mR^n$. Here,
$\Theta:= (\theta_{jk})_{1\< j,k\< n}$ is a real antisymmetric matrix of order $n$ (we denote the subspace of such matrices by $\mA_n$) which is assumed to be nonsingular. The CCRs (\ref{CCR}) are formulated in terms of the unitary Weyl operators
$$
  \cW_u := \re^{iu^{\rT} X} = \cW_{-u}^{\dagger},
$$
where $u^{\rT} X = \sum_{k=1}^n u_k x_k$ is a linear combination of the system variables assembled into the vector (\ref{Xx}),
with the coefficients comprising the vector $u:=(u_k)_{1\< k\< n} \in \mR^n$, so that $u^{\rT}X$ is also a self-adjoint operator.
 The relations (\ref{CCR}) imply that
$$
    [\cW_u, \cW_v]
    =
    -2i\sin(u^{\rT}\Theta v)\cW_{u+v},
$$
which leads to the
Heisenberg  infinitesimal form of the Weyl CCRs, specified on a dense subset of the space $\fH$ by the commutator matrix
\begin{equation}
\label{Thetalin}
    [X, X^{\rT}]
    =
     2i \Theta.
\end{equation}
For example, in the case when the system variables are $q_1, \ldots, q_{n/2}, p_1, \ldots, p_{n/2}$  and consist of conjugate quantum mechanical position $q_k$ and momentum $p_k = -i\d_{q_k}$ operators (with an appropriately normalized Planck constant) \cite{M_1998,S_1994} which satisfy $[q_j,p_k] = i\delta_{jk}$ for all $j,k=1,\ldots, \frac{n}{2}$, with $\delta_{jk}$ the Kronecker delta,  the CCR matrix $\Theta$ takes the form
\begin{equation}
\label{posmom}
    \Theta
    :=
    \frac{1}{2}
    \bJ \ox I_{n/2}
    =
    \frac{1}{2}
    \begin{bmatrix}
        0 & I_{n/2}\\
        -I_{n/2} & 0
    \end{bmatrix},
\end{equation}
where the matrix $\bJ$ is given by (\ref{bJ}). Not restricted to this particular case, 
the system Hamiltonian $H$ of the OQHO is a quadratic function and the system-field coupling operators $\ell_1, \ldots, \ell_m$ in (\ref{Lell}) are linear functions of the system variables:
\begin{align}
\label{H}
    H &
    =
    \frac{1}{2}
    \sum_{j,k=1}^{n}r_{jk}
    x_jx_k = \frac{1}{2} X^{\rT} R X,\\
\label{LM}
    L&
    =
    MX.
\end{align}
Here, $R:= (r_{jk})_{1\< j,k\< n}$ is a real symmetric matrix of order $n$ (the subspace of such matrices is denoted by $\mS_n$), and $M \in \mR^{m\x n}$. The matrices $R$ and $M$  will be referred to as the energy and coupling matrices, respectively.   Due to the CCRs (\ref{Thetalin}) and the linear-quadratic energetics (\ref{H}) and (\ref{LM}), the QSDEs (\ref{dX}) and (\ref{dY}) become linear with respect to the system variables:
\begin{align}
\label{dXlin}
  \rd X
    & =
    A X \rd t+ B \rd W,\\
\label{dYlin}
  \rd Y
    & =
    C X \rd t+ \rd W,
\end{align}
where the matrices $A\in \mR^{n\x n}$, $B\in \mR^{n\x m}$, $C \in \mR^{m\x n}$ are computed as
\begin{align}
\label{Alin}
    A
    &
    :=
    2\Theta (R + M^{\rT} J M),\\
\label{Blin}
    B
    &
    := 2\Theta M^{\rT},\\
\label{Clin}
    C
    &
    := 2J M.
\end{align}
In combination with the symmetry of $R$ and antisymmetry of $J$ and $\Theta$ in  (\ref{J})  and (\ref{Thetalin}),
the specific dependence of the  matrices $A$, $B$, $C$ on the energy and coupling matrices $R$ and $M$ in (\ref{Alin})--(\ref{Clin})  is equivalent to the physical realizability (PR) conditions \cite{JNP_2008,SP_2012}:
\begin{align}
\label{APR}
    A \Theta + \Theta A^{\rT} + BJB^{\rT} & = 0,\\
\label{BCPR}
    \Theta C^{\rT} + BJ & = 0.
\end{align}
The equality (\ref{APR}) is related to the preservation of the CCRs (\ref{Thetalin}) in time, while (\ref{BCPR})
corresponds to (\ref{XY}).

The linearity of the QSDEs (\ref{dXlin}) and (\ref{dYlin}) makes the OQHO a basic model in linear quantum control \cite{JNP_2008,NJP_2009,P_2017}. If the initial system variables have finite second moments (that is, $\bE(X(0)^{\rT}X(0))< +\infty$), the linear dynamics (\ref{dX}) preserve the mean square integrability in time and lead to finite limit values of the first and second moments
\begin{equation}
\label{EXX}
    \lim_{t\to +\infty} \bE X(t) = 0,
    \qquad
    \lim_{t\to +\infty} \bE(X(t)X(t)^{\rT}) = P+i\Theta,
\end{equation}
provided the matrix $A$ in (\ref{Alin}) is Hurwitz.
Here, $\bE \zeta := \Tr(\rho\zeta)$ is the quantum expectation over the system-field density operator $\varrho := \varpi \ox \ups$, which is the tensor product of the initial system state $\varpi$ and the vacuum state $\ups$ of the input bosonic  fields on the Fock space $\fF$ \cite{P_1992}. 
The matrix $P$ in (\ref{EXX}) coincides with the infinite-horizon controllability Gramian
\begin{equation}
\label{P}
    P
    =
    \int_0^{+\infty}
    \re^{tA}
    BB^{\rT}
    \re^{tA^{\rT}}
    \rd t
\end{equation}
of the matrix pair $(A,B)$,  which is a unique solution  of an algebraic Lyapunov equation (ALE)
due to the matrix $A$ being Hurwitz:
\begin{equation}
\label{PALE}
    AP+PA^{\rT} + BB^{\rT} = 0.
\end{equation}
Despite this connection with classical linear systems theory, the quantum covariance matrix in (\ref{EXX}) satisfies $P + i\Theta\succcurlyeq 0$, which reflects the generalized Heisenberg uncertainty principle \cite{H_2001} and is a stronger property than the positive semi-definiteness of $P$ alone in the classical case.
This property can also be obtained directly by combining the ALE (\ref{PALE}) with the PR condition (\ref{APR}), which leads to the ALE
$
    A (P+i\Theta) + (P+i\Theta) A^{\rT} + B\Omega B^{\rT} = 0
$
whose solution satisfies
\begin{equation}
\label{Ppos}
    P+i\Theta
    =
    \int_0^{+\infty}
    \re^{tA}B\Omega
    B^{\rT}\re^{tA^{\rT}}\rd t
    \succcurlyeq 0
\end{equation}
in view of the positive semi-definiteness of the quantum Ito matrix $\Omega$ in (\ref{Omega}). Application of a matrix-valued  version of the Plancherel theorem leads to the frequency-domain representation of (\ref{Ppos}):
\begin{equation}
\label{Ppos1}
    P+i\Theta
    =
    \frac{1}{2\pi}
    \int_{-\infty}^{+\infty}
    F(i\lambda)\Omega F(i\lambda)^*
    \rd \lambda,
\end{equation}
where $(\cdot)^*:= (\overline{(\cdot)})^{\rT}$ denotes the complex conjugate transpose. Here, $\mC \ni s\mapsto F(s)\in \mC^{n\x m}$ is a rational transfer function from $W$ to $X$ given by
\begin{equation}
\label{Flin}
    F(s)
     := (sI_n - A)^{-1}B.
\end{equation}
A related transfer function $G$ from $W$ to $Y$, which, together with $F$, is associated   with the QSDEs (\ref{dXlin}) and (\ref{dYlin}) of the OQHO, is given by
\begin{equation}
\label{Glin}
    G(s)
    := C F(s) + I_m.
\end{equation}
In view of (\ref{Flin}), the poles of $F$ and $G$ belong to the spectrum of the matrix $A$ and hence, are entirely in the left half-plane $\Re s<0$ due to $A$ being Hurwitz.   Similarly to classical linear stochastic systems, these transfer functions  relate the Laplace transforms  of the quantum processes $X$, $Y$, $W$ (considered in the right-half plane $\Re s>0$)   as
\begin{align}
\nonumber
    \wt{X}(s)
    & :=
    \int_{0}^{+\infty}
    \re^{-st}
    X(t)\rd t \\
\label{Xtilde}
    & =
    F(s)\wt{W}(s)
    +
    (sI_n - A)^{-1}X(0),\\
\nonumber
    \wt{Y}(s)
    & :=
    \int_{0}^{+\infty}
    \re^{-st}
    \rd Y(t) \\
\nonumber
    & =
    C \wt{X}(s) + \wt{W}(s)\\
\label{Ytilde}
    & =
    G(s)\wt{W}(s)
    +
    C(sI_n - A)^{-1}X(0),\\
\label{Wtilde}
    \wt{W}(s)
    & :=
    \int_0^{+\infty}
    \re^{-st}
    \rd W(t).
\end{align}
Due to the PR conditions (\ref{APR}) and (\ref{BCPR}), the transfer function $G$ in (\ref{Glin}) is $(J,J)$-unitary \cite{SP_2012} in the sense that
\begin{equation}
\label{GJJ}
    G(i\lambda) JG(i\lambda)^* = J
\end{equation}
for all $\lambda \in \mR$. Since $G$ has an identity feedthrough matrix $\lim_{s\to \infty}G(s) =
I_m$, its $\cH_{\infty}$-norm (in the appropriate Hardy space) satisfies $\|G\|_{\infty}\> 1$.

If the matrix $A$ is Hurwitz, then the reduced system state converges to an invariant Gaussian quantum state \cite{KRP_2010,PS_2015}  in the sense of appropriately modified weak convergence of probability measures \cite{B_1968}. This property holds
irrespective of whether the initial system state is Gaussian or has finite second-order moments of the system variables (which is essential for (\ref{EXX})) and is equivalent to the point-wise convergence of the quasi-characteristic function (QCF) \cite{CH_1971}:
\begin{equation}
\label{limQCFlin}
    \lim_{t\to +\infty}
    \bE \re^{iu^{\rT} X(t)}
    =
    \re^{-\frac{1}{2} \|u\|_P^2},
    \qquad
    u\in \mR^n.
\end{equation}
Here, the matrix $P$ is given by (\ref{P}), and $\|u\|_P := \sqrt{u^{\rT} P u} = |\sqrt{P} u |$ denotes the corresponding weighted Euclidean norm. The QCF on the right-hand side of (\ref{limQCFlin}) is identical to the characteristic function of the classical Gaussian distribution in $\mR^n$ with zero mean vector and covariance matrix $P$.  However, as mentioned above, the quantum nature of the setting manifests itself in the stronger property  $P + i\Theta\succcurlyeq 0$ of the matrix $P$ in (\ref{Ppos}).

The convergence (\ref{limQCFlin})
can be used in order to generate a zero-mean Gaussian state with a given quantum covariance matrix  $P+i\Theta$ as an invariant state of the OQHO described by (\ref{dXlin}) and (\ref{dYlin}); see, for example, \cite{Y_2012}. To this end, the energy and coupling matrices $R$ and $M$ have to be chosen so that the matrix $A$ in (\ref{Alin}) is  Hurwitz and (\ref{P}) is satisfied for a given admissible matrix $P$.  Any particular choice of $R$ and $M$ does not affect the CCR matrix $\Theta$ in (\ref{Thetalin}) or the Gaussian nature of the invariant state and only influences the matrix $P$. Indeed,  the energetics of the OQHO remains linear-quadratic and the QSDE, driven by the vacuum input fields, remains linear.\footnote{A wider class of perturbations of the Hamiltonian and coupling operators, leading to nonlinear QSDEs and non-Gaussian invariant states, is considered, for example, in \cite{SVP_2014,V_2015c,VPJ_2017}.} However, the matrix $P$ determines other important properties of Gaussian states such as purity \cite{SSM_1988}. In application to the invariant Gaussian system state,
with a reduced density operator $r$ (which is also a positive semi-definite self-adjoint operator of unit trace),
the purity is quantified by
\begin{equation}
\label{puritylin}
    \Tr (r^2)
    =
    \sqrt{\frac{\det \Theta}{\det P}}
    \< 1.
\end{equation}
The Gaussian state is pure if and only if the inequality in (\ref{puritylin}) is an equality, in which case, $\det P$ achieves its minimum   value $\det \Theta$ in view of the matrix inequality in (\ref{Ppos}).

In the case when the dimension $n$ of the OQHO is high, of practical interest is scalability in the  generation of pure (or nearly pure) states. This can be achieved, for example, by using the cascade architecture \cite{GZ_2004,MWPY_2014} which allows such an OQHO to be assembled from relatively simple components (such as one-mode oscillators). At the same time, the resulting large number of subsystems  makes it important to secure robustness of the purity functional with respect to the cumulative effect of perturbations in the individual components.

%%%%%%%%%%%%%%%%%%%%%%%%%%%%%%%%%%%%%%%%%%%%%%%%%%%%%%%%%%%%%%%%%%%%%%%%%%%%%%%%%%%%%%%%%%%%%%%%%%%
\section{Cascaded open quantum harmonic oscillators}
\label{sec:sys}
%%%%%%%%%%%%%%%%%%%%%%%%%%%%%%%%%%%%%%%%%%%%%%%%%%%%%%%%%%%%%%%%%%%%%%%%%%%%%%%%%%%%%%%%%%%%%%%%%%%

Consider the field-mediated cascade connection of $N$ OQHOs in Fig.~\ref{fig:casc}, which are driven by
%==============================================================================
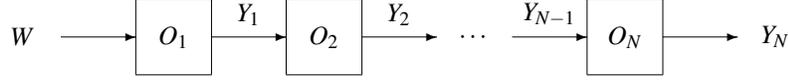
\begin{figure}[htbp]
\centering
\unitlength=1mm
\linethickness{0.2pt}
\begin{picture}(110.00,20.00)
    \put(5,10){\makebox(0,0)[cc]{$W$}}
    \put(10,10){\vector(1,0){10}}
    \put(20,5){\framebox(10,10)[cc]{$O_1$}}
    \put(30,10){\vector(1,0){10}}
    \put(35,13){\makebox(0,0)[cc]{$Y_1$}}

    \put(40,5){\framebox(10,10)[cc]{$O_2$}}
    \put(50,10){\vector(1,0){10}}
    \put(55,13){\makebox(0,0)[cc]{$Y_2$}}

    \put(65,10){\makebox(0,0)[cc]{$\cdots$}}
    \put(70,10){\vector(1,0){10}}
    \put(75,13){\makebox(0,0)[cc]{$Y_{N-1}$}}
    \put(80,5){\framebox(10,10)[cc]{$O_N$}}
    \put(90,10){\vector(1,0){10}}
    \put(105,10){\makebox(0,0)[cc]{$Y_N$}}
\end{picture}
\caption{A block-diagram of $N$ cascaded OQHOs $O_1, \ldots, O_N$ which are driven by a quantum Wiener process  $W$ and produce the output fields $Y_1, \ldots, Y_N$, so that $O_k$ has output $Y_k$ and input $Y_{k-1}$, with $Y_0$ being identified with $W$. }
\label{fig:casc}
\end{figure}
%==============================================================================
the $m$-channel quantum Wiener process 
$W$ in (\ref{W}) as described in Section~\ref{sec:LQSS}.
The oscillators have output fields $Y_1, \ldots, Y_N$ of the same dimension \cite{GJ_2009,MWPY_2014}.
For every $k=1,\ldots, N$, the $k$th oscillator $O_k$ of the cascade is endowed with the initial space $\fH_k$ and a vector $X_k$ of $n_k$ system variables. In accordance with (\ref{Thetalin}), these system variables satisfy CCRs and commute with one another for different subsystems:
\begin{equation}
\label{XXk}
    [X_j(t), X_k(t)^{\rT}]
    =
    \left\{
    \begin{matrix}
     2i \Theta_k & {\rm if}\ j=k\\
     0 & {\rm otherwise}
    \end{matrix}
    \right.
\end{equation}
for any time $t\>0$ and all $j,k=1,\ldots, N$, where $\Theta_k\in \mA_{n_k}$ are nonsingular CCR matrices. 
The system and output field variables are governed by a set of $N$ coupled QSDEs
\begin{align}
\label{dXk}
    \rd X_k
    & =
    A_kX_k \rd t + B_k \rd Y_{k-1},\\
\label{dYk}
    \rd Y_k
    & = C_kX_k\rd t + \rd Y_{k-1},
\end{align}
where $Y_0$ is identified with $W$, as mentioned before.   The matrices
$A_k\in \mR^{n_k\x n_k}$, $B_k\in \mR^{n_k\x m}$, $C_k \in \mR^{m\x n_k}$ of these QSDEs are given by
\begin{align}
\label{Ak}
    A_k
    & := 2\Theta_k (R_k + M_k^{\rT}JM_k),\\% = 2\Theta R - \frac{1}{2}BJB^{\rT}\Theta^{-1},
\label{Bk}
    B_k
    &:= 2\Theta_k M_k^{\rT},\\
\label{Ck}
    C_k
    & := 2J M_k,
\end{align}
similarly to (\ref{Alin})--(\ref{Clin}).
Here, $J\in \mA_m$ is the imaginary part of the quantum Ito matrix $\Omega$ in (\ref{Omega}) which is
inherited by the output fields $Y_1, \ldots, Y_N$ in (\ref{dYk}) from the input quantum Wiener process $W$ in (\ref{W}):
\begin{equation}
\label{YYk}
    \rd Y_j\rd Y_k^{\rT}
    =
    \rd W \rd W^{\rT}
    =
    \Omega\rd t
\end{equation}
for all $j,k=1,\ldots, N$.
Also, similarly to (\ref{H}) and (\ref{LM}), the energy and coupling matrices $R_k \in \mS_{n_k}$  and $M_k\in \mR^{m\x n_k}$ in (\ref{Ak})--(\ref{Ck}) specify the system Hamiltonian $H_k$ and the vector $L_k$ of $m$ operators of system-field  coupling of the $k$th oscillator $O_k$ to the output $Y_{k-1}$ of the preceding oscillator $O_{k-1}$ in the cascade (see Fig.~\ref{fig:casc}):
\begin{equation}
\label{HLk}
        H_k
        :=\frac{1}{2}X_k^{\rT}R_k X_k,
        \qquad
        L_k := M_k X_k.
\end{equation}
In accordance with (\ref{APR}) and (\ref{BCPR}), the state-space matrices $A_k$, $B_k$, $C_k$ in (\ref{Ak})--(\ref{Ck}) satisfy the PR conditions 
\begin{align}
\label{APRk}
    A_k \Theta_k + \Theta_k A_k^{\rT} + B_kJB_k^{\rT} & = 0,\\
\label{BCPRk}
    \Theta_k C_k^{\rT} + B_kJ & = 0,
\end{align}
which pertain to the preservation of the CCRs (\ref{XXk}) and the commutativity between the future system variables and the past output variables of the cascaded oscillators:
\begin{equation}
\label{XYk}
        [X_j(t),Y_k(\tau)^{\rT}]
     =
    0,
    \qquad
    j,k=1,\ldots, N,
    \quad
    t\> \tau\> 0.
\end{equation}
The above cascade can be regarded as a composite OQHO with the augmented vector of $n:= n_1+\ldots +n_N$ system variables
\begin{equation}
\label{cX}
    \cX
    :=
    \begin{bmatrix}
    X_1\\
    \vdots\\
    X_N
    \end{bmatrix}
\end{equation}
which satisfies a linear QSDE obtained by combining the QSDEs (\ref{dXk}) and (\ref{dYk}):
\begin{align}
\label{dcX}
    \rd \cX
    & =
    \cA \cX \rd t + \cB \rd W,\\
\label{dcY}
    \rd Y_N
    & = \cC \cX \rd t + \rd W.
\end{align}
The output of this composite system is the output $Y_N$ of the $N$th oscillator $O_N$ in the cascade. Also, the matrices $\cA\in \mR^{n\x n}$, $\cB\in \mR^{n\x m}$, $\cC\in \mR^{m\x n}$ in (\ref{dcX}) and (\ref{dcY}) are the last elements
\begin{equation}
\label{cABCN}
    \cA:= \cA_N,
    \qquad
    \cB:=\cB_N,
    \qquad
    \cC:=\cC_N
\end{equation}
of a matrix sequence $(\cA_k,\cB_k,\cC_k)_{1\< k\< N}$ computed recursively as
\begin{align}
\nonumber
    \cA_k
    & =
    \begin{bmatrix}
    A_{11} & \cdots & A_{1k}\\
    \vdots & \ddots & \vdots\\
    A_{k1} & \cdots & A_{kk}
    \end{bmatrix}\\
\nonumber
    & =
    \begin{bmatrix}
        A_1 & 0 & 0 & \cdots& 0\\
        B_2 C_1 & A_2 & 0 & \cdots & 0 \\
%        B_3 C_1 & B_3 C_2 & A_3 &\cdots &  0 \\
        \cdots &\cdots  & \cdots &\cdots & \vdots \\
        B_k C_1 & B_k C_2 & \cdots & B_k C_{k-1} & A_k
    \end{bmatrix}\\
\label{cAk}
    & =
    \begin{bmatrix}
        \cA_{k-1} & 0\\
        B_k \cC_{k-1} & A_k
    \end{bmatrix},\\
\label{cBk}
    \cB_k
    & =
    \begin{bmatrix}
        B_1\\
        \vdots\\
        B_k
    \end{bmatrix}
    =
    \begin{bmatrix}
        \cB_{k-1}\\
        B_k
    \end{bmatrix},\\
\label{cCk}
    \cC_k
    & =
    \begin{bmatrix}
        C_1 & \cdots & C_k
    \end{bmatrix}
    =
    \begin{bmatrix}
        \cC_{k-1} & C_k
    \end{bmatrix},
\end{align}
with the initial conditions being the matrices of the first oscillator $O_1$ in the cascade:
\begin{equation}
\label{cABC1}
    \cA_1:= A_1,
    \qquad
    \cB_1:= B_1,
    \qquad
    \cC_1:= C_1,
\end{equation}
see also \cite{MWPY_2014}. In view of (\ref{Ak})--(\ref{Ck}) and (\ref{cAk}), the blocks $A_{jk} \in \mR^{n_j \x n_k}$ of the matrix $\cA$ are given by
\begin{equation}
\label{Ajk}
    A_{jk}
    :=
    2\Theta_j
    \left\{
    \begin{matrix}
        R_k + M_k^{\rT} J M_k & {\rm if} & j=k\\
        2 M_j^{\rT} J M_k & {\rm if} & j>k\\
        0& {\rm if} & j<k
    \end{matrix}
    \right.
\end{equation}
for all $j,k=1, \ldots, N$. The corresponding augmented version of the PR conditions (\ref{APRk}) and (\ref{BCPRk}) takes the form
\begin{align}
\label{cAPR}
    \cA \bTheta + \bTheta \cA^{\rT} + \cB J\cB^{\rT} & = 0,\\
\label{cBCPR}
    \bTheta \cC^{\rT} + \cB J & = 0,
\end{align}
where
\begin{equation}
\label{Theta}
  \bTheta
  :=
  \diag_{1\< k\< N}(\Theta_k)
\end{equation}
is a block-diagonal CCR matrix of order $n$ for the augmented vector $\cX$ in (\ref{cX}) which represents the CCRs (\ref{XXk}) as
\begin{equation}
\label{cXX}
    [\cX, \cX^{\rT}]
    =
     2i \bTheta.
\end{equation}
Similarly to (\ref{APRk}) and (\ref{BCPRk}), the PR conditions (\ref{cAPR}) and (\ref{cBCPR}) are equivalent to the preservation of the CCRs (\ref{cXX}) and the commutativity (\ref{XYk}), respectively.

In accordance with \cite[Theorem 5.1]{NJD_2009} and \cite[Lemma 3]{N_2010}, a combination of (\ref{Ak})--(\ref{Ck}) with (\ref{cABCN})--(\ref{Ajk}) and (\ref{Theta}) leads to the following energy and coupling matrices $\cR \in \mS_n$ and $\cM \in \mR^{m\x n}$ of the composite OQHO:
\begin{equation}
\label{cRM}
  \cR
  =
    \begin{bmatrix}
    R_{11} & \cdots & R_{1N}\\
    \vdots & \ddots & \vdots\\
    R_{N1} & \cdots & R_{NN}
    \end{bmatrix},
    \qquad
    \cM
    =
    \begin{bmatrix}
    M_1 & \cdots & M_N
    \end{bmatrix},
\end{equation}
where the blocks $R_{jk} \in \mR^{n_j\x n_k}$ are computed for all $j,k=1, \ldots, N$ as
\begin{equation}
\label{Rjk}
    R_{jk}
    :=
    \left\{
    \begin{matrix}
        R_k & {\rm if} & j=k\\
        M_j^{\rT} J M_k & {\rm if} & j>k\\
        -M_j^{\rT} J M_k & {\rm if} & j<k
    \end{matrix}
    \right..
\end{equation}
The energy matrices $R_1, \ldots, R_N$ of the component oscillators enter the energy matrix $\cR$ of the composite system only through its diagonal part
\begin{equation}
\label{bitR}
  \bitR
  :=
  \diag_{1\< k\< N}
  (R_k)
\end{equation}
which takes values in a Hilbert space $\fR$, isomorphic  to the Hilbert space $\mS_{n_1}\x \ldots \x \mS_{n_N}$ with the direct-sum inner product (associated with the Frobenius  inner product of real matrices $\bra L, M\ket:= \Tr(L^{\rT} M)$ and the corresponding norm $\|M\|:=\sqrt{\bra M, M\ket}$).
The specific quadratic dependence of the energy matrix $\cR$ on the coupling matrices $M_1, \ldots, M_N$ in (\ref{cRM}) and  (\ref{Rjk}) is closely related to the block lower triangularity  of the matrix $\cA$ of the augmented QSDE (\ref{dcX}) due to the cascade structure of the quantum system being considered.
While the matrix $\cA$,   which can be  represented as
\begin{equation}
\label{cARM}
    \cA = 2\bTheta (\cR + \cM^{\rT} J \cM),
\end{equation}
depends linearly on the energy matrices $R_1, \ldots, R_N$ and quadratically on the coupling matrices $M_1, \ldots, M_N$ of the constituent OQHOs, the dependence of the matrices $\cB$ and $\cC$ in (\ref{dcX}) and (\ref{dcY}) on the coupling matrices is linear, similarly to (\ref{Bk}) and (\ref{Ck}):
\begin{align}
\label{cBM}
    \cB  &= 2\bTheta \cM^{\rT},\\
\nonumber
    \cC & = 2J \cM.
\end{align}
The Hamiltonian $\frac{1}{2}\cX^{\rT}\cR \cX$ and the vector $\cM \cX$  of operators of coupling with the external input $W$, which are computed  for the composite OQHO  in terms of the component parameters according to  (\ref{cRM}) and (\ref{Rjk}),  can also be obtained by using the quantum feedback network formalism \cite{GJ_2009,JG_2010} and the individual energy operators in (\ref{HLk}).

%%%%%%%%%%%%%%%%%%%%%%%%%%%%%%%%%%%%%%%%%%%%%%%%%%%%%%%%%%%%%%%%%%%%%%%%%%%%%%%%%%%%%%%%%%%%%%%%%%%
\section{Gaussian invariant quantum state}
\label{sec:inv}
%%%%%%%%%%%%%%%%%%%%%%%%%%%%%%%%%%%%%%%%%%%%%%%%%%%%%%%%%%%%%%%%%%%%%%%%%%%%%%%%%%%%%%%%%%%%%%%%%%%

If the matrices $A_1, \ldots, A_N$ in (\ref{Ak}) are Hurwitz, then so is the block lower triangular matrix $\cA$ in (\ref{cABCN}). In this case, in accordance with Section~\ref{sec:LQSS}, the composite OQHO has a unique invariant state which is Gaussian with zero mean vector and the quantum covariance matrix $\cP + i\bTheta\succcurlyeq 0$, whose real part
\begin{equation}
\label{cP}
  \cP := \int_0^{+\infty}\re^{t\cA}\cB\cB^{\rT}\re^{t\cA^{\rT}}\rd t
\end{equation}
is the controllability Gramian of the pair $(\cA, \cB)$, similarly to (\ref{P}) and (\ref{PALE}), and is found uniquely by solving the ALE
\begin{equation}
\label{cPALE}
    \cA \cP + \cP\cA^{\rT} + \cB \cB^{\rT} = 0.
\end{equation}
Similarly to (\ref{Ppos}) and (\ref{Ppos1}), the state-space and frequency-domain representations of the invariant quantum covariance matrix are  given by
\begin{align}
\nonumber
    \cP+i\bTheta
    & =
    \int_0^{+\infty}
    \re^{t\cA}\cB\Omega
    \cB^{\rT}\re^{t\cA^{\rT}}\rd t    \\
\label{cPpos1}
    & =
    \frac{1}{2\pi}
    \int_{-\infty}^{+\infty}
    \cF(i\lambda)\Omega \cF(i\lambda)^*
    \rd \lambda.
\end{align}
Here, $\cF$ is the $\mC^{n\x m}$-valued rational transfer function from the input quantum Wiener process $W$ to the system variables in $\cX$ computed as
\begin{equation}
\label{FG}
    \cF
    :=
    \begin{bmatrix}
        F_1\\
        F_2 G_1\\
        F_3 G_2G_1\\
        \vdots\\
        F_N G_{N-1}\x \ldots \x G_1
    \end{bmatrix},
\end{equation}
where
\begin{align}
\label{Fk}
    F_k(s)
    & := (sI_{n_k} - A_k)^{-1}B_k,\\
\label{Gk}
    G_k(s)
    & := C_k F_k(s) + I_m
\end{align}
are  the transfer functions from $Y_{k-1}$ to $X_k$ and $Y_k$, respectively, associated with the QSDEs (\ref{dXk}) and (\ref{dYk}) for the $k$th OQHO. 
Similarly to (\ref{Xtilde}) and (\ref{Ytilde}), these transfer functions  relate the Laplace transforms of $X_k$ and $Y_k$ to that of $W$ in (\ref{Wtilde}) in the right-half plane $\Re s>0$ by
\begin{align}
\nonumber
    \wt{X}_k(s)
    := &
    \int_{0}^{+\infty}
    \re^{-st}
    X_k(t)\rd t \\
\label{Xktilde}
    = &
    F_k(s)\wt{Y}_{k-1}(s)
    +
    (sI_{n_k} - A_k)^{-1}X_k(0),\\
\nonumber
    \wt{Y}_k(s)
    := &
    \int_{0}^{+\infty}
    \re^{-st}
    \rd Y_k(t) \\
\nonumber
    = &
    C_k \wt{X}_k(s) + \wt{Y}_{k-1}(s)\\
\nonumber
    = &
    G_k(s)\wt{Y}_{k-1}(s)
    +
    C_k(sI_{n_k} - A_k)^{-1}X_k(0)\\
\nonumber
    = &
    G_k(s)\x \ldots \x G_1(s)\wt{W}(s)\\
\label{Yktilde}
    & +
    \sum_{j=1}^k
    G_k(s)\x \ldots \x G_{j+1}(s)
    C_j(sI_{n_j} - A_j)^{-1}X_j(0).
\end{align}
In view of the PR conditions (\ref{APRk}) and (\ref{BCPRk}), each of the functions $G_1, \ldots, G_N$ in (\ref{Gk}) is $(J,J)$-unitary in the sense of (\ref{GJJ}). 
The products of the transfer matrices in (\ref{FG}) are closely related to the concatenation of quantum systems \cite{GJ_2009,JG_2010} in the case of linear QSDEs being considered \cite{YK_2003}. Since the transfer functions $G_k$  in (\ref{Gk}) have an identity feedthrough matrix $I_m$, so also do their products.  Hence, their $\cH_{\infty}$-norms satisfy
\begin{equation}
\label{Gprodnorm}
    \|G_k\x \ldots \x G_1\|_{\infty}\> 1,
    \qquad
    k=1,\ldots, N,
\end{equation}
which also follows from the $(J,J)$-unitarity\footnote{and the fact that such matrices form a group} of $G_k$, 
whereby no contraction can be guaranteed in (\ref{FG}). This can potentially facilitate the propagation of modelling errors over the cascade (especially from the first oscillators towards the end in long cascades), which will be discussed for the translation invariant case in  Appendix~\ref{sec:trans}.

Now, a given admissible real part $\cP$ of the quantum covariance matrix  $\cP+i\bTheta \succcurlyeq 0$ for the invariant Gaussian state of the cascaded OQHOs in (\ref{XXk})--(\ref{dYk}) can be achieved by choosing the energy and coupling matrices $R_1, \ldots, R_N$ and $M_1, \ldots, M_N$ so as to make the matrices $A_1, \ldots, A_N$ in (\ref{Ak}) Hurwitz and to satisfy (\ref{cP}); see, for example, \cite{MWPY_2014,Y_2012}. 
Inaccuracies in these matrices leave the CCRs (\ref{cXX}) intact and can
only perturb the matrix $\cP$ without destroying the Gaussian nature of the invariant state. 
In application to the invariant Gaussian state of the composite OQHO (with the reduced density operator $r$), the purity functional (\ref{puritylin}) takes the form
\begin{equation}
\label{purity}
    \Tr (r^2)
    =
    \sqrt{\frac{\det \bTheta}{\det \cP}}
    \< 1,
\end{equation}
where only the denominator depends on the matrices $R_1, \ldots, R_N$ and $M_1, \ldots, M_N$ of the oscillators. 
Note that the purity functional (\ref{purity}) is invariant with respect to the transformations
\begin{equation}
\label{SRS}
    R_k \mapsto S_k^{-\rT} R_k S_k^{-1},
    \qquad
    M_k \mapsto M_k S_k^{-1}
\end{equation}
for arbitrary matrices $S_k \in \mR^{n_k\x n_k}$, satisfying $S_k\Theta_k S_k^{\rT} = \Theta_k$ and forming the symplectic group $\Sp(\Theta_k)$ (which agrees with the usual definition of the symplectic group \cite{D_2006,O_1993,V_1984} up to the matrix transpose). With the transfer functions $G_k$ in (\ref{Gk}) remaining  unchanged,  (\ref{SRS}) corresponds to the
symplectic transformations
\begin{equation}
\label{SXk}
    X_k\mapsto S_k X_k
\end{equation}
of the system variables of the component oscillators which preserve the CCRs (\ref{cXX}) since
the matrix
\begin{equation}
\label{diagS}
    S:= \diag_{1\< k\< N} (S_k)
\end{equation}
belongs to $\Sp(\bTheta)$, where $\bTheta$ is given by (\ref{Theta}).  From the property $\det S = 1$ for any such matrix $S$, it follows that the corresponding transformation
\begin{equation}
\label{SPS}
    \cP \mapsto S \cP S^{\rT}
\end{equation}
leaves $\det(S \cP S^{\rT}) = \det \cP$ unchanged, and hence, the purity functional in (\ref{purity}) is indeed invariant under such transformations. Since (\ref{purity}) can be  expressed in terms of the log determinant of the matrix $\cP$ as
\begin{equation}
\label{V}
    \Tr (r^2)
    =
    \sqrt{\det\bTheta}
    \re^{-\frac{1}{2}V},
    \qquad
    V:= \ln\det \cP,
\end{equation}
the minimization of the functional $V$ and its sensitivity to the perturbations in the energy and coupling matrices is relevant to the  purity  of Gaussian quantum states generated through cascaded oscillators \cite{MWPY_2014,N_2010,Y_2012}.

%%%%%%%%%%%%%%%%%%%%%%%%%%%%%%%%%%%%%%%%%%%%%%%%%%%%%%%%%%%%%%%%%%%%%%%%%%%%%%%%%%%%%%%%%%%%%%%%%%%
\section{Recursive computation of the steady-state covariances}
\label{sec:reccov}
%%%%%%%%%%%%%%%%%%%%%%%%%%%%%%%%%%%%%%%%%%%%%%%%%%%%%%%%%%%%%%%%%%%%%%%%%%%%%%%%%%%%%%%%%%%%%%%%%%%

The cascade structure of the composite quantum system under consideration (see Fig.~\ref{fig:casc}) leads to ``spatial causality'' in the dependence of the matrix $\cP$ in (\ref{cP}) on the energy and coupling matrices of the component oscillators. In accordance with the partitioning of the vector $\cX$ in (\ref{cX}), the matrix $\cP$ can be split into blocks $P_{jk}\in \mR^{n_j\x n_k}$ as
\begin{equation}
\label{cPN}
    \cP
    =
    \begin{bmatrix}
    P_{11} & \cdots & P_{1N}\\
    \vdots & \ddots & \vdots\\
    P_{N1} & \cdots & P_{NN}
    \end{bmatrix}
    =
    \cP_N
\end{equation}
and regarded as the last element in the sequence of matrices $\cP_1, \ldots, \cP_N$ given by
\begin{equation}
\label{cPk}
    \cP_k
    :=
    \begin{bmatrix}
    P_{11} & \cdots & P_{1k}\\
    \vdots & \ddots & \vdots\\
    P_{k1} & \cdots & P_{kk}
    \end{bmatrix}
    =
    \begin{bmatrix}
    \cP_{k-1}& Q_k^{\rT}\\
    Q_k & P_{kk}
    \end{bmatrix},
\end{equation}
where
\begin{equation}
\label{Qk}
    Q_k
    :=
    \begin{bmatrix}
    P_{k1} & \cdots & P_{k,k-1}
    \end{bmatrix}    .
\end{equation}
The matrix $\cP_k$ in (\ref{cPk}) is the real part of the quantum covariance matrix $\cP_k + i\bTheta_k$
of the reduced Gaussian invariant state for the oscillators $O_1, \ldots, O_k$, where
$
    \bTheta_k
  :=
  \diag_{1\< j\< k}(\Theta_j)
$
is the CCR matrix of their system variables constituting the vectors  $X_1, \ldots, X_k$, which corresponds to (\ref{Theta}).
The above mentioned spatial causality is the property that $\cP_j$ is  independent of $R_k$ and $M_k$ for all $1\< j<k\< N$. This property is closely related to the following recursive computation of the matrix $\cP$.

%%%%%%%%%%%%%%%%%%%%%%%%%%%%%%%%%%%%%%%%%%%%%%%%%%%%%%%%%%%%%%%%%%%%%%%%%%%%%%%%%%%%%%%%%%%%%%%%%%%
\begin{lemma}
\label{lem:cP}
Suppose the matrices $A_1, \ldots, A_N$ in (\ref{Ak}) are Hurwitz. Then the matrix $\cP$ in (\ref{cPN}) can be found by recursively computing the matrices $\cP_1, \ldots, \cP_N$ in (\ref{cPk}) through solving the ALEs
\begin{align}
\nonumber
    A_k Q_k  & +Q_k\cA_{k-1}^{\rT}\\
\label{QkALE}
    & +  B_k (\cC_{k-1}\cP_{k-1}   + \cB_{k-1}^{\rT}) = 0,\\
\nonumber
    A_k P_{kk} & + P_{kk} A_k^{\rT}\\
\label{PkkALE}
    & +
    B_k \cC_{k-1} Q_k^{\rT} + Q_k \cC_{k-1}^{\rT}B_k^{\rT} + B_k B_k^{\rT} = 0
\end{align}
with respect to  the matrices $Q_k$ and $P_{kk}$ in (\ref{Qk}) for all $k=2, \ldots, N$. Here, the initial condition $\cP_1=P_{11}$ is obtained by solving the ALE
\begin{equation}
\label{P11ALE}
  A_1 P_{11} + P_{11} A_1^{\rT} + B_1 B_1^{\rT} = 0.
\end{equation}
\hfill$\square$
\end{lemma}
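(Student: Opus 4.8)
The plan is to show that, for every $k$, the leading block $\cP_k$ of $\cP$ in (\ref{cPk}) is itself the controllability Gramian of the pair $(\cA_k,\cB_k)$, so that it solves the reduced ALE
\[
    \cA_k \cP_k + \cP_k \cA_k^{\rT} + \cB_k \cB_k^{\rT} = 0,
\]
and then to extract the recursions (\ref{QkALE})--(\ref{P11ALE}) block-by-block from this family. First I would use the block lower triangularity of $\cA = \cA_N$ in (\ref{cAk}): its top-left block is exactly $\cA_k$, and its first $k$ block-rows have vanishing entries beyond the $k$th block-column, so the matrix exponential $\re^{t\cA}$ inherits the same triangular structure with top-left block $\re^{t\cA_k}$. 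Hence the first $k$ block-rows of $\re^{t\cA}\cB$ reduce to $\re^{t\cA_k}\cB_k$ (the tail blocks of $\cB$ being annihilated), and substituting into the integral (\ref{cP}) shows that the leading $k$ block of $\cP$ equals $\int_0^{+\infty}\re^{t\cA_k}\cB_k\cB_k^{\rT}\re^{t\cA_k^{\rT}}\rd t=\cP_k$. This is the ``spatial causality'' property: $\cP_k$ depends only on the first $k$ oscillators and satisfies the reduced ALE above, which is the Gramian equation for the sub-cascade $(\cA_k,\cB_k,\cC_k)$.

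Next I would insert the partitions (\ref{cAk}), (\ref{cBk}) and (\ref{cPk}) into this reduced ALE and compare blocks, using $\cP_k=\cP_k^{\rT}$. The four block equations collapse to three distinct ones. The leading $(k-1)$-block reproduces the reduced ALE for $\cP_{k-1}$, a consistency check that matches the induction hypothesis. The off-diagonal block, after collecting the terms $B_k\cC_{k-1}\cP_{k-1}$, $A_k Q_k$, $Q_k\cA_{k-1}^{\rT}$ and $B_k\cB_{k-1}^{\rT}$, yields precisely the Sylvester equation (\ref{QkALE}) for $Q_k$; and the trailing diagonal block gives the Lyapunov equation (\ref{PkkALE}) for $P_{kk}$ (the top-right block is merely the transpose of the off-diagonal one and contributes nothing new). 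The base case $k=1$ is $\cP_1=P_{11}$ solving (\ref{P11ALE}), which is the ALE for $(\cA_1,\cB_1)=(A_1,B_1)$.

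Finally I would establish well-posedness of the recursion. Each $A_k$ is Hurwitz by hypothesis, and since $\cA_{k-1}$ is block lower triangular with diagonal blocks $A_1,\ldots,A_{k-1}$, its spectrum is their union, so $\cA_{k-1}$ is Hurwitz as well. Therefore (\ref{PkkALE}) is a Lyapunov equation with the Hurwitz coefficient $A_k$ and has a unique solution $P_{kk}$, while (\ref{QkALE}) is a Sylvester equation whose coefficients $A_k$ and $\cA_{k-1}^{\rT}$ both have spectra in the open left half-plane, so that $A_k$ and $-\cA_{k-1}^{\rT}$ share no eigenvalue and $Q_k$ is uniquely determined. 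Thus the blocks $Q_k$, $P_{kk}$ are computed uniquely for $k=2,\ldots,N$, reconstructing $\cP=\cP_N$. The only genuinely nontrivial step is the first one, the identification of the leading block of $\cP$ with the sub-cascade Gramian $\cP_k$; once the triangular structure has been exploited to secure this, the remainder is routine block bookkeeping together with the standard Sylvester and Lyapunov solvability criteria.
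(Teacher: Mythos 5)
Your proof is correct and follows essentially the same route as the paper: partition the reduced ALE $\cA_k\cP_k+\cP_k\cA_k^{\rT}+\cB_k\cB_k^{\rT}=0$ into blocks and read off (\ref{QkALE}), (\ref{PkkALE}) and the ALE for $\cP_{k-1}$. The one place you go beyond the paper is in explicitly justifying, via the block-triangular structure of $\re^{t\cA}$, that the leading block of $\cP$ coincides with the Gramian of $(\cA_k,\cB_k)$ — a fact the paper merely asserts as pertaining to the steady-state covariances of the first $k$ oscillators — and in spelling out the Sylvester/Lyapunov solvability; both additions are sound and welcome.
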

%%%%%%%%%%%%%%%%%%%%%%%%%%%%%%%%%%%%%%%%%%%%%%%%%%%%%%%%%%%%%%%%%%%%%%%%%%%%%%%%%%%%%%%%%%%%%%%%%%%
\begin{proof}
For any $k=1, \ldots, N$, the matrix $\cP_k$, which pertains to the steady-state covariance dynamics of the system variables for the first $k$ oscillators in the cascade,  satisfies the ALE
\begin{equation}
\label{cPkALE}
    \cA_k \cP_k + \cP_k \cA_k^{\rT} + \cB_k\cB_k^{\rT} = 0,
\end{equation}
where the matrices $\cA_k$ and $\cB_k$ are given by (\ref{cAk}) and (\ref{cBk}).
The ALE (\ref{P11ALE}) is obtained by letting $k=1$ in (\ref{cPkALE}). For any $k=2, \ldots, N$, the structure of the matrices $\cA_k$ and $\cB_k$ in (\ref{cAk}), (\ref{cBk}) and $\cP_k$ in (\ref{cPk}) leads to
\begin{align*}
    \cA_k
    \cP_k
    & =
    \begin{bmatrix}
        \cA_{k-1} \cP_{k-1} & \cA_{k-1} Q_k^{\rT}\\
        B_k \cC_{k-1} \cP_{k-1} + A_k Q_k & B_k \cC_{k-1} Q_k^{\rT} + A_k P_{kk}
    \end{bmatrix},\\
    \cB_k
    \cB_k^{\rT}
    & =
    \begin{bmatrix}
        \cB_{k-1} \cB_{k-1}^{\rT} & \cB_{k-1} B_k^{\rT}\\
        B_k \cB_{k-1}^{\rT} & B_k B_k^{\rT}
    \end{bmatrix}.
\end{align*}
Therefore, since the left-hand side of (\ref{cPkALE}) is a symmetric matrix, this ALE splits
into three equations consisting of (\ref{QkALE}), (\ref{PkkALE}) and
\begin{equation*}
    \cA_{k-1} \cP_{k-1}
    +
    \cP_{k-1} \cA_{k-1}^{\rT}
    +
    \cB_{k-1}\cB_{k-1}^{\rT} = 0.
\end{equation*}
The last equation reproduces (\ref{cPkALE}) with the preceding value of $k$, thus completing the proof by a standard  induction argument.
\end{proof}
%%%%%%%%%%%%%%%%%%%%%%%%%%%%%%%%%%%%%%%%%%%%%%%%%%%%%%%%%%%%%%%%%%%%%%%%%%%%%%%%%%%%%%%%%%%%%%%%%%%

Note that for any given $k=1, \ldots, N$, the energy and coupling matrices $R_k$ and $M_k$ of the $k$th oscillator influence the matrix $\cP$ through the $k$th block rows and the $k$th block columns of the matrices $\cA$ and $\cB\cB^{\rT}$ in the ALE (\ref{cPALE}). The corresponding ``cross-shaped'' fragments of $\cA$ and $\cB\cB^{\rT}$ are described by
$$
    \begin{bmatrix}
    * & * & * \\
    B_k \cC_{k-1} & A_k & *\\
     * & \cB_{>k} C_k & *
    \end{bmatrix},
    \quad
    \begin{bmatrix}
    * & \cB_{k-1} B_k^{\rT} & * \\
    B_k\cB_{k-1}^{\rT} & B_kB_k^{\rT} & B_k \cB_{>k}^{\rT}\\
    * & \cB_{>k} B_k^{\rT} & *
    \end{bmatrix},
$$
where use is made of (\ref{cAk}) and (\ref{cBk}) together with  an auxiliary matrix
\begin{equation}
\label{cB>k}
    \cB_{>k}
    =
    \cB_{\> k+1}
    =
    \begin{bmatrix}
        B_{k+1}\\
        \vdots\\
        B_N
    \end{bmatrix}
    =
    \begin{bmatrix}
        B_{k+1}\\
        \cB_{>k+1}
    \end{bmatrix}.
\end{equation}
At the level of covariances,  there is a connection between the quantum system variables\footnote{which, as mentioned before,  are not accessible to simultaneous measurement and conditional averaging because of their noncommutativity \cite{H_2001}} and classical random variables.
More precisely, the real parts of the  steady-state covariances of the system variables in the cascaded OQHOs can be reproduced by considering a sequence of jointly Gaussian classical random vectors $\xi_1, \ldots, \xi_N$ of dimensions $n_1, \ldots, n_N$ in the form
\begin{equation}
\label{Xik}
    \Xi_k
    :=
    \begin{bmatrix}
        \xi_1\\
        \vdots\\
        \xi_k
    \end{bmatrix}=
    \begin{bmatrix}
        \Xi_{k-1}\\
        \xi_k
    \end{bmatrix}\\
    =
    \int_0^{+\infty}
    \re^{t\cA_k}
    \cB_k
    \rd \omega(t),
\end{equation}
where $\omega$ is a standard Wiener process in $\mR^m$,  and the improper integral is convergent (in particular, in the mean square sense) since the matrix $\cA_k$ is Hurwitz. Indeed, the covariance  matrices of these auxiliary random vectors coincide with the blocks of the matrix $\cP$ in (\ref{cP}) and (\ref{cPN})--(\ref{Qk}):
\begin{equation}
\label{PQ}
    \cov(\xi_j,\xi_k) = P_{jk},
    \qquad
    \cov(\xi_k,\Xi_{k-1}) = Q_k
\end{equation}
for all $j,k=1,\ldots, N$, in terms of which the functional $V$ in (\ref{V}) admits the decomposition
\begin{equation}
\label{VV}
    V = \sum_{k=1}^N V_k,
    \qquad
        V_k
    :=
    \ln \det \Pi_k.
\end{equation}
Here, the matrix $\Pi_k$ is the Schur complement \cite{HJ_2007} of the block $\cP_{k-1}$ in the matrix $\cP_k$ in (\ref{cPk}), which is given by
\begin{align}
\nonumber
    \Pi_k
    & :=
    P_{kk} - Q_k\cP_{k-1}^{-1} Q_k^{\rT}    \\
\nonumber
    & =
    \cov(\xi_k \mid \Xi_{k-1})\\
\label{Pik}
    & =
    \bE\big((\xi_k-\wh{\xi}_k)(\xi_k-\wh{\xi}_k)^{\rT} \mid \Xi_{k-1}\big)
\end{align}
and
coincides  with the conditional covariance matrix  of the random vector $\xi_k$ given the ``past history'' $\Xi_{k-1}$ (in the sense of the spatial parameter $k$). Also,
\begin{equation}
\label{xihat}
    \wh{\xi}_k
    :=
    \bE(\xi_k\mid \Xi_{k-1})
    =
    Q_k\cP_{k-1}^{-1} \Xi_{k-1}
\end{equation}
is the corresponding predictor.
In (\ref{Pik}) and (\ref{xihat}),
use is also made of (\ref{Xik}),  (\ref{PQ}) and the structure of conditional distributions for jointly Gaussian random vectors, which plays an important role in linear stochastic filtering \cite{AM_1979,LS_2001}.

The above mentioned spatial causality manifests itself in the  fact that, for all $1\< j<k\< N$, the Schur complement $\Pi_j$ in (\ref{Pik}) is  independent of the energy and coupling matrices $R_k$ and $M_k$ of the $k$th oscillator,  and hence, so is the quantity $V_j$ in (\ref{VV}).

%%%%%%%%%%%%%%%%%%%%%%%%%%%%%%%%%%%%%%%%%%%%%%%%%%%%%%%%%%%%%%%%%%%%%%%%%%%%%%%%%%%%%%%%%%%%%%%%%%%
\section{Infinitesimal perturbation analysis of the purity functional}
\label{sec:diff}
%%%%%%%%%%%%%%%%%%%%%%%%%%%%%%%%%%%%%%%%%%%%%%%%%%%%%%%%%%%%%%%%%%%%%%%%%%%%%%%%%%%%%%%%%%%%%%%%%%%

As mentioned in Section~\ref{sec:inv}, inaccuracies in the energy and coupling matrices of the cascaded OQHOs can affect the purity functional (\ref{purity}) for the Gaussian invariant   state of the system. Its sensitivity to infinitely small perturbations can be described in terms of the Frechet derivatives of the functional $V$ in (\ref{V})  with respect to the matrices $R_1, \ldots, R_N$ and $M_1, \ldots, M_N$ which constitute the matrices $\bitR$ and $\cM$ in (\ref{bitR}) and (\ref{cRM}). To this end, we will first establish a preliminary lemma which computes such derivatives for the invariant covariance matrix of the composite system. More precisely, the condition that the matrix $\cA$ of the composite quantum system (\ref{dcX}) is Hurwitz ensures smooth dependence of the matrix $\cP$ in (\ref{cP}) on the $\fR\x \mR^{m\x n}$-valued pair $(\bitR, \cM)$ comprising the  energy and coupling matrices of the cascaded oscillators. This smoothness leads to a well-defined Frechet derivative
\begin{equation}
\label{dPdRM0}
    \Lambda:= \Lambda_{\bitR,\cM} := \d_{\bitR, \cM} \cP
\end{equation}
which (for a given pair $(\bitR, \cM)$) is a linear operator acting from the space $\fR\x \mR^{m\x n}$ to $\mS_n$. The first variation of the matrix $\cP$ is expressed in terms of $\Lambda$ as
\begin{align}
\nonumber
    \delta\cP
    & =
    \Lambda(\delta\bitR, \delta\cM)\\
\label{dPdRM}
    & =
    \d_{\bitR}\cP(\delta \bitR)
    +
    \d_{\cM}\cP(\delta \cM),
\end{align}
where $\d_{\bitR}\cP$ and $\d_{\cM}\cP$ denote the corresponding partial Frechet derivatives.
For what follows, we associate with arbitrary Hurwitz matrices $\alpha$ and $\beta$ a linear operator  $\bL_{\alpha,\beta}$ which maps an appropriately dimensioned matrix $\gamma$ to
\begin{equation}
\label{bL}
    \bL_{\alpha,\beta}(\gamma) := \int_0^{+\infty}\re^{t\alpha}\gamma\re^{t\beta^{\rT}}\rd t,
\end{equation}
which is a unique solution $\sigma$ of an algebraic  Sylvester equation (ASE), or a generalized ALE $\alpha \sigma + \sigma \beta^{\rT} + \gamma = 0$; see, for example, \cite{GLAM_1992}.
In the case when $\alpha=\beta$, which corresponds to standard ALEs, an abbreviated notation
\begin{equation}
\label{bLbL}
    \bL_{\alpha}:= \bL_{\alpha,\alpha}
\end{equation}
will be used. 
Also, we denote by $\[[[\alpha, \beta\]]]$ a ``sandwich'' operator which acts on an appropriately dimensioned matrix $\gamma$ as
\begin{equation}
\label{sand}
    \[[[\alpha,\beta\]]](\gamma) := \alpha \gamma \beta.
\end{equation}
Furthermore, it will be convenient to denote the symmetrizer and antisymmetrizer of square matrices by
\begin{align}
\label{bSbA}
    \bS(M)
    :=
    \frac{1}{2}(M+M^{\rT}),
    \qquad
    \bA(M)
    :=
    \frac{1}{2}(M-M^{\rT}).
\end{align}
Also, we will need an auxiliary linear operator $\Gamma:= \Gamma_{\cM}$, which is associated with the coupling matrix $\cM$ in (\ref{cRM}) and maps   its variation $\delta \cM$ to a block lower triangular matrix $\Gamma(\delta \cM ) := (\Gamma_{jk}(\delta \cM ))_{1\< j,k\< N}$ whose blocks are given by
\begin{equation}
\label{Gammajk}
    \Gamma_{jk}(\delta \cM ) =
    \left\{
    \begin{matrix}
        \bA(M_k^{\rT} J \delta M_k) & {\rm if} & j=k\\
        (\delta M_j)^{\rT} J M_k + M_j^{\rT} J \delta M_k & {\rm if} & j>k\\
        0& {\rm if} & j<k
    \end{matrix}
    \right..
\end{equation}
The first variation of the block $A_{jk}$ of the matrix  $\cA$ in (\ref{Ajk}) with respect to the matrix $\cM$ is expressed in terms of (\ref{Gammajk}) as
$$
    \delta_{\cM} A_{jk} =  4\Theta_j \Gamma_{jk}(\delta \cM)
$$
for all $j,k=1,\ldots, N$.
Therefore, the operator $\Gamma$ allows the partial Frechet derivative of $\cA$ with respect to $\cM$ to be represented as the composition
\begin{equation}
\label{dAdM}
    \d_{\cM} \cA = 4\[[[\bTheta, I_n\]]] \Gamma,
\end{equation}
where (\ref{Theta}) and (\ref{sand}) are used.
The following lemma carries out an infinitesimal perturbation analysis for the steady-state covariances of the quantum system.

%%%%%%%%%%%%%%%%%%%%%%%%%%%%%%%%%%%%%%%%%%%%%%%%%%%%%%%%%%%%%%%%%%%%%%%%%%%%%%%%%%%%%%%%%%%%%%%%%%%
\begin{lemma}
\label{lem:dPdRM}
Suppose the matrices $A_1, \ldots, A_N$ in (\ref{Ak}) are Hurwitz.
Then the partial Frechet derivatives  $\d_{\bitR}\cP$ and $\d_{\cM}\cP$ in (\ref{dPdRM}) are computed as
\begin{align}
\label{dPdR}
    \d_{\bitR}\cP
    & =
    4\bL_{\cA}\bS\[[[\bTheta, \cP\]]],\\
\label{dPdM}
    \d_{\cM}\cP
    & =
    4\bL_{\cA}\bS
    (2\[[[\bTheta, \cP\]]]\Gamma
    -
    \[[[\cB,\bTheta\]]])
\end{align}
in terms of the operators (\ref{bL})--(\ref{dAdM}).
\hfill$\square$
\end{lemma}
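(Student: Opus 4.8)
The plan is to differentiate the defining ALE (\ref{cPALE}) and invert the resulting Lyapunov operator. Taking the first variation of $\cA\cP + \cP\cA^{\rT} + \cB\cB^{\rT} = 0$ and collecting the terms containing $\delta\cP$ on the left gives another ALE, namely $\cA\,\delta\cP + \delta\cP\,\cA^{\rT} + (\delta\cA\,\cP + \cP\,\delta\cA^{\rT} + \delta(\cB\cB^{\rT})) = 0$. Since the block lower triangular matrix $\cA$ is Hurwitz (being assembled from the Hurwitz matrices $A_1, \ldots, A_N$), this equation has a unique solution expressed through the operator $\bL_{\cA}$ of (\ref{bL})--(\ref{bLbL}):
\begin{equation*}
    \delta\cP
    =
    \bL_{\cA}\big(\delta\cA\,\cP + \cP\,\delta\cA^{\rT} + \delta(\cB\cB^{\rT})\big).
\end{equation*}
It then remains to specialize this identity to variations in $\bitR$ and $\cM$ and to rewrite each right-hand side through the sandwich operators $\[[[\cdot,\cdot\]]]$ and the symmetrizer $\bS$ of (\ref{bSbA}).

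For the energy matrices, $\cB$ is independent of $\bitR$, so the $\cB\cB^{\rT}$ term vanishes; moreover, in $\cA = 2\bTheta(\cR + \cM^{\rT}J\cM)$ of (\ref{cARM}) only the block-diagonal part of $\cR$ varies, whence $\delta_{\bitR}\cA = 2\[[[\bTheta, I_n\]]](\delta\bitR)$. Because $\cP$ is symmetric, the terms $\delta_{\bitR}\cA\,\cP$ and $\cP\,\delta_{\bitR}\cA^{\rT}$ are mutual transposes, so their sum equals $2\bS(2\bTheta\,\delta\bitR\,\cP) = 4\bS\[[[\bTheta,\cP\]]](\delta\bitR)$; applying $\bL_{\cA}$ reproduces (\ref{dPdR}). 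Note that only the symmetry of $\cP$ and the elementary identity $M+M^{\rT} = 2\bS(M)$ are used here, with no appeal to the antisymmetry of $\bTheta$.

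For the coupling matrices I would invoke the already-established operator identity $\d_{\cM}\cA = 4\[[[\bTheta, I_n\]]]\Gamma$ of (\ref{dAdM}) for the first two terms; as before their sum is $2\bS(4\bTheta\,\Gamma(\delta\cM)\,\cP) = 8\bS\[[[\bTheta,\cP\]]]\Gamma(\delta\cM)$, which supplies the contribution $2\[[[\bTheta,\cP\]]]\Gamma$ to (\ref{dPdM}) once the common factor $4$ is extracted. The remaining term $\delta_{\cM}(\cB\cB^{\rT})$ is more delicate: writing $\cB\cB^{\rT} = 4\bTheta\,\cM^{\rT}\cM\,\bTheta^{\rT}$ from $\cB = 2\bTheta\cM^{\rT}$ in (\ref{cBM}) and differentiating yields $-4\bTheta(\cM^{\rT}\delta\cM + (\delta\cM)^{\rT}\cM)\bTheta$, and the point is to recognize this as $-4\bS\[[[\cB,\bTheta\]]](\delta\cM)$. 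Here the antisymmetry $\bTheta^{\rT} = -\bTheta$, together with $\cB^{\rT} = -2\cM\bTheta$, is what makes the transpose of $\cB\,\delta\cM\,\bTheta = 2\bTheta\,\cM^{\rT}\delta\cM\,\bTheta$ equal $2\bTheta\,(\delta\cM)^{\rT}\cM\,\bTheta$, so that $\bS\[[[\cB,\bTheta\]]](\delta\cM) = \bTheta(\cM^{\rT}\delta\cM + (\delta\cM)^{\rT}\cM)\bTheta$. Assembling the three contributions under $\bL_{\cA}\bS$ and factoring out the $4$ yields (\ref{dPdM}).

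The hard part is bookkeeping rather than conceptual: keeping the factors of $2$ and the signs consistent along the chain $\cB = 2\bTheta\cM^{\rT}$, $\cB^{\rT} = -2\cM\bTheta$, and verifying that the symmetrizer $\bS$ correctly absorbs both the transpose pairing of the $\delta\cA$ terms and the inherently symmetric $\delta(\cB\cB^{\rT})$ term into the compact sandwich-operator form. The linearity of $\bL_{\cA}$, $\bS$ and $\Gamma$ renders these rearrangements routine once the individual first variations are in hand, and the block structure (\ref{Gammajk}) of $\Gamma$ need not be opened up at all, since (\ref{dAdM}) has already packaged the full dependence of $\cA$ on $\cM$.
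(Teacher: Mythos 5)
Your proposal is correct and follows essentially the same route as the paper's proof: differentiate the ALE (\ref{cPALE}), invert the resulting Lyapunov equation via $\bL_{\cA}$, substitute the first variations of $\cA$ and $\cB$, and package the result with the symmetrizer and sandwich operators. The only cosmetic difference is that you differentiate $\cB\cB^{\rT}$ as a whole using $\cB\cB^{\rT}=-4\bTheta\cM^{\rT}\cM\bTheta$, whereas the paper writes $\bS((\delta\cB)\cB^{\rT}+\cB\,\delta\cB^{\rT})=2\bS(\cB\,\delta\cB^{\rT})$ and substitutes $\delta\cB=2\bTheta\,\delta\cM^{\rT}$, which amounts to the same computation.
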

%%%%%%%%%%%%%%%%%%%%%%%%%%%%%%%%%%%%%%%%%%%%%%%%%%%%%%%%%%%%%%%%%%%%%%%%%%%%%%%%%%%%%%%%%%%%%%%%%%%
\begin{proof}
The proof is carried out by using the algebraic techniques of \cite{SIG_1998,VP_2013a} justified by the smooth dependence of $\cP$ on the energy and coupling matrices under the condition that $\cA$ is Hurwitz. More precisely, the first variation of the ALE (\ref{cPALE}) leads to
\begin{align}
\nonumber
    \cA \delta \cP & + (\delta \cP)\cA^{\rT}\\
\nonumber
    & + (\delta \cA) \cP + \cP\delta \cA^{\rT}\\
\label{dPALE1}
    & +  (\delta \cB) \cB^{\rT}+ \cB \delta \cB^{\rT}= 0.
\end{align}
Since the matrix $\cP$ is symmetric, (\ref{dPALE1}) can be represented as
\begin{align}
\nonumber
    \cA \delta \cP  & + (\delta \cP)\cA^{\rT}\\
\label{dPALE2}
     & + 2\bS((\delta \cA) \cP + \cB\delta \cB^{\rT})= 0,
\end{align}
where use is made of the symmetrizer $\bS$ from (\ref{bSbA}). The relation (\ref{dPALE2}) is an ALE with respect to the matrix $\delta \cP$, whose solution can be expressed as
\begin{equation}
\label{cPbL}
    \delta \cP
    =
    2\bL_{\cA}(\bS((\delta \cA) \cP + \cB \delta \cB^{\rT}))
\end{equation}
in terms of the operator $\bL_{\cA}$ given by  (\ref{bL}) and (\ref{bLbL}). In view of (\ref{Ajk}) and (\ref{cARM}), the first variation of the matrix $\cA$,  as a function of the matrices $\bitR$ and $\cM$ from (\ref{bitR}) and (\ref{cRM}), is computed as
\begin{equation}
\label{dcA}
    \delta \cA
    =
    2\bTheta (\delta\bitR  + 2\Gamma(\delta \cM)) ,
\end{equation}
where use is made of the operator $\Gamma$ from (\ref{Gammajk}) and (\ref{dAdM}). By a similar reasoning, the first variation of the matrix $\cB$ in (\ref{cBM}) with respect to $\cM$ is  given by
\begin{equation}
\label{dcB}
    \delta \cB
    =
    2\bTheta \delta \cM^{\rT}.
\end{equation}
Substitution of (\ref{dcA}) and (\ref{dcB}) into (\ref{cPbL}) relates $\delta\cP$ to $\delta \bitR$ and $\delta\cM$ by
\begin{equation}
\label{cPbL1}
    \delta \cP
    =
    4\bL_{\cA}(\bS((\bTheta (\delta\bitR  + 2\Gamma(\delta \cM))) \cP -\cB \delta \cM \bTheta)),
\end{equation}
where use is also made of the antisymmetry of the CCR matrix $\bTheta$ in (\ref{Theta}). By comparing
(\ref{cPbL1}) with (\ref{dPdRM}), it follows  that the partial Frechet derivatives $\cP$ with respect to $\bitR$ and $\cM$ take the form (\ref{dPdR}) and (\ref{dPdM}).
\end{proof}
%%%%%%%%%%%%%%%%%%%%%%%%%%%%%%%%%%%%%%%%%%%%%%%%%%%%%%%%%%%%%%%%%%%%%%%%%%%%%%%%%%%%%%%%%%%%%%%%%%%%%%

The proof of Lemma~\ref{lem:dPdRM} leads to the following group theoretic property of the linear operator $\Lambda$ in (\ref{dPdRM0}) with respect to the symplectic similarity transformations of the cascaded oscillators described by  (\ref{SRS}) and (\ref{SXk}).

%%%%%%%%%%%%%%%%%%%%%%%%%%%%%%%%%%%%%%%%%%%%%%%%%%%%%%%%%%%%%%%%%%%%%%%%%%%%%%%%%%%%%%%%%%%%%%%%%%%
\begin{lemma}
\label{lem:dPdRMnew}
Suppose the matrices $A_1, \ldots, A_N$ in (\ref{Ak}) are Hurwitz.
Then, for any symplectic matrix $S\in \Sp(\bTheta)$ in (\ref{diagS}),  the Frechet derivatives $\Lambda$ in (\ref{dPdRM0}), evaluated  at the original and  transformed pairs $(\bitR,\cM)$ and  $(S^{-\rT} \bitR S^{-1}, \cM S^{-1})$, are related by
\begin{equation}
\label{dPdRMnew}
        \Lambda_{S^{-\rT} \bitR S^{-1}, \cM S^{-1}}
        (\delta \bitR, \delta\cM)
        =
        S
        \Lambda_{\bitR, \cM}
        \big(
            S^{\rT} (\delta \bitR) S, \,
            (\delta\cM) S
        \big)
        S^{\rT}.
\end{equation}
\hfill$\square$
\end{lemma}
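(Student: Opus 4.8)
The plan is to obtain the infinitesimal identity (\ref{dPdRMnew}) by differentiating a finite equivariance property of the Gramian map $(\bitR,\cM)\mapsto\cP$, rather than by manipulating the explicit formulas (\ref{dPdR})--(\ref{dPdM}) directly. First I would record how the state-space matrices respond to the symplectic change of variables (\ref{SRS}) with $S=\diag_{1\< k\< N}(S_k)\in\Sp(\bTheta)$. Under (\ref{SRS}) one has $\cR+\cM^{\rT}J\cM\mapsto S^{-\rT}(\cR+\cM^{\rT}J\cM)S^{-1}$ and $\cM^{\rT}\mapsto S^{-\rT}\cM^{\rT}$, so, combining this with $\cA=2\bTheta(\cR+\cM^{\rT}J\cM)$ from (\ref{cARM}), $\cB=2\bTheta\cM^{\rT}$ from (\ref{cBM}), and the symplectic relation $\bTheta S^{-\rT}=S\bTheta$ (equivalent to $S\bTheta S^{\rT}=\bTheta$), one gets $\cA\mapsto S\cA S^{-1}$ and $\cB\mapsto S\cB$. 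In particular $\cA$ retains its eigenvalues, so the Hurwitz hypothesis of Lemma~\ref{lem:dPdRM} is preserved by the transformation. Inserting the transformed matrices into the ALE (\ref{cPALE}) and conjugating by $S^{-1}$ on the left and $S^{-\rT}$ on the right, uniqueness of the stabilizing solution yields the finite equivariance
\begin{equation*}
    \cP(S^{-\rT}\bitR S^{-1},\,\cM S^{-1})=S\,\cP(\bitR,\cM)\,S^{\rT},
\end{equation*}
which expresses (\ref{SPS}) as a functional identity in the parameters.

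Next I would differentiate this identity in $(\bitR,\cM)$. Writing $\Phi(\bitR,\cM):=(S^{-\rT}\bitR S^{-1},\cM S^{-1})$ for the (linear, invertible) reparametrization of the parameter space $\fR\x\mR^{m\x n}$ and $\Psi(\cP):=S\cP S^{\rT}$ for the (linear) conjugation on $\mS_n$, the equivariance reads $\cP\circ\Phi=\Psi\circ\cP$. Since $\Phi$ and $\Psi$ are linear, each equals its own Frechet derivative, so the chain rule gives $\Lambda_{\Phi(\bitR,\cM)}\circ\Phi=\Psi\circ\Lambda_{\bitR,\cM}$; applied to an arbitrary variation $(\delta\bitR,\delta\cM)$ this reads
\begin{equation*}
    \Lambda_{S^{-\rT}\bitR S^{-1},\,\cM S^{-1}}\big(S^{-\rT}(\delta\bitR)S^{-1},\,(\delta\cM)S^{-1}\big)=S\,\Lambda_{\bitR,\cM}(\delta\bitR,\delta\cM)\,S^{\rT}.
\end{equation*}

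Finally I would reparametrize the argument by feeding $\Phi^{-1}(\delta\bitR,\delta\cM)=(S^{\rT}(\delta\bitR)S,(\delta\cM)S)$ into the last display in place of $(\delta\bitR,\delta\cM)$; the inner $\Phi$ then cancels and the assertion (\ref{dPdRMnew}) appears verbatim. The step needing care is precisely this substitution: the transformed derivative, evaluated at a generic variation, equals the conjugated original derivative evaluated at the \emph{pulled-back} variation $(S^{\rT}(\delta\bitR)S,(\delta\cM)S)$, and keeping the transposes and the left/right placement straight rests on $\bTheta S^{-\rT}=S\bTheta$. I expect this bookkeeping, not any analytic difficulty, to be the only real obstacle. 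As a cross-check---and the route suggested by the explicit formulas of Lemma~\ref{lem:dPdRM}---one can instead substitute (\ref{dPdR})--(\ref{dPdM}) and track each constituent operator under the transformation ($\bL_{\cA}\mapsto\[[[S,S^{\rT}\]]]\,\bL_{\cA}\,\[[[S^{-1},S^{-\rT}\]]]$, the sandwiches $\[[[\bTheta,\cP\]]]$ and $\[[[\cB,\bTheta\]]]$ under $\cP\mapsto S\cP S^{\rT}$, $\cB\mapsto S\cB$ with $\bTheta$ fixed, and $\Gamma_{\cM}$ under $\cM\mapsto\cM S^{-1}$), but this is considerably more laborious than the functional-equation argument above.
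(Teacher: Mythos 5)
Your proposal is correct and follows essentially the same route as the paper: the paper likewise derives the covariance $\cA\mapsto S\cA S^{-1}$, $\cB\mapsto S\cB$, $\cP\mapsto S\cP S^{\rT}$ from the symplectic property and the (variational) ALE, obtains exactly your intermediate identity $\Lambda_{S^{-\rT}\bitR S^{-1},\cM S^{-1}}(S^{-\rT}(\delta\bitR)S^{-1},(\delta\cM)S^{-1})=S\Lambda_{\bitR,\cM}(\delta\bitR,\delta\cM)S^{\rT}$, and then pulls back the variations; even your closing remark about the more laborious operator-identity cross-check mirrors the paper's stated alternative via (\ref{opid1})--(\ref{opid5}).
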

%%%%%%%%%%%%%%%%%%%%%%%%%%%%%%%%%%%%%%%%%%%%%%%%%%%%%%%%%%%%%%%%%%%%%%%%%%%%%%%%%%%%%%%%%%%%%%%%%%%
\begin{proof}
The property (\ref{dPdRMnew}) is inherited by $\Lambda$ from solutions of the ALE (\ref{dPALE1}). More precisely, if the matrices $\bitR$ and $\cM$ and their variations $\delta\bitR$ and $\delta\cM$ are transformed as
\begin{align}
\label{bitRnew}
    \bitR
    & \mapsto
    S^{-\rT} \bitR S^{-1},
    \qquad
    \delta \bitR
    \mapsto
    S^{-\rT} (\delta\bitR) S^{-1},\\
\label{cMnew}
    \cM
    & \mapsto
    \cM S^{-1},
    \qquad\ \ \ \,
    \delta \cM
    \mapsto
    (\delta \cM) S^{-1},
\end{align}
then the corresponding matrices $\cA$, $\cB$, $\cP$ and their first variations $\delta \cA$, $\delta\cB$, $\delta\cP$ in (\ref{dcA}), (\ref{dcB}),  (\ref{dPALE1}) are transformed as
\begin{align}
\label{cAnew}
    \cA
    & \mapsto
    S\cA S^{-1},
    \qquad
    \delta\cA
    \mapsto
    S(\delta\cA) S^{-1},\\
\label{cBnew}
    \cB
    & \mapsto
    S\cB,
    \qquad\quad\ \ \,
    \delta\cB
    \mapsto
    S\delta \cB,\\
\label{cPnew}
    \cP
    & \mapsto
    S\cP S^{\rT},
    \qquad\
    \delta\cP
    \mapsto
    S(\delta\cP) S^{\rT}
\end{align}
due to the symplectic property of the  matrix $S\in \Sp(\bTheta)$. A combination of (\ref{bitRnew})--(\ref{cPnew}) with (\ref{dPdRM}) implies that the operator $\Lambda$ in (\ref{dPdRM0}) satisfies
\begin{equation}
\label{dPdRMnew1}
        \Lambda_{S^{-\rT} \bitR S^{-1}, \cM S^{-1}}
        \big(
            S^{-\rT}(\delta \bitR) S^{-1}, \,
            (\delta\cM) S^{-1}
        \big)
        =
        S
        \Lambda_{\bitR, \cM}
        (
            \delta \bitR,
            \delta\cM
        )
        S^{\rT}.
\end{equation}
The relation (\ref{dPdRMnew}) now follows from (\ref{dPdRMnew1}) by applying an appropriate inverse transformation to the variations $\delta\bitR$ and $\delta\cM$ of the independent variables. An alternative (and somewhat less  intuitive) way to establish (\ref{dPdRMnew}) is to use the operator identities
\begin{align}
\label{opid1}
    \bL_{S\cA S^{-1} }
    \[[[S,S^{\rT}\]]]
        & =
    \[[[S,S^{\rT}\]]]
    \bL_{\cA},\\
\label{opid2}
    \[[[\bTheta,S\cP S^{\rT}\]]]
    & =
    \[[[S,S^{\rT}\]]]
    \[[[\bTheta,\cP\]]]
    \[[[S^{\rT},S\]]],\\
\label{opid3}
    \[[[S\cB,\bTheta\]]]
    & =
    \[[[ S,S^{\rT}\]]]
    \[[[\cB,\bTheta\]]]
    \[[[I_m, S\]]],\\
\label{opid4}
    \Gamma_{\cM S^{-1}}
    \[[[I_m,S^{-1}\]]]
    & =
    \[[[S^{-\rT}, S^{-1}\]]]
    \Gamma_{\cM},\\
\label{opid5}
    \bS \[[[S,S^{\rT}\]]]
    & =
    \[[[S,S^{\rT}\]]] \bS,
\end{align}
which hold for any matrix $S\in \Sp(\bTheta)$ in (\ref{diagS}). Indeed, by applying (\ref{opid1})--(\ref{opid5})   to the right-hand sides of (\ref{dPdR}) and (\ref{dPdM}), it follows that the operators $\d_{\bitR} \cP$ and $\d_{\cM}\cP$ are transformed as
\begin{align}
\nonumber
    \d_{\bitR}\cP
    & \mapsto
    4\bL_{S\cA S^{-1}}\bS\[[[\bTheta, S\cP S^{\rT}\]]]\\
\nonumber
    & =
    4\bL_{S\cA S^{-1}}\bS    \[[[S,S^{\rT}\]]]
    \[[[\bTheta,\cP\]]]
    \[[[S^{\rT},S\]]]\\
\nonumber
    & =
    4\bL_{S\cA S^{-1}}\[[[S,S^{\rT}\]]]\bS
    \[[[\bTheta,\cP\]]]
    \[[[S^{\rT},S\]]]\\
\nonumber
    & =
    4\[[[S,S^{\rT}\]]]
    \bL_{\cA}\bS
    \[[[\bTheta,\cP\]]]
    \[[[S^{\rT},S\]]]\\
\label{dPdRnew}
    & =
    \[[[S,S^{\rT}\]]]
    \d_{\bitR}\cP
    \[[[S^{\rT},S\]]],\\
\nonumber
    \d_{\cM}\cP
    & \mapsto
    4\bL_{S\cA S^{-1}}\bS
    (2\[[[\bTheta, S\cP S^{\rT}\]]]\Gamma_{\cM S^{-1}}
    -
    \[[[S \cB,\bTheta\]]])\\
\nonumber
    & =
    4\bL_{S\cA S^{-1}}\bS
    (2
    \[[[S,S^{\rT}\]]]
    \[[[\bTheta,\cP\]]]
    \[[[S^{\rT},S\]]]
    \Gamma_{\cM S^{-1}}
    -
    \[[[ S,S^{\rT}\]]]
    \[[[\cB,\bTheta\]]]
    \[[[I_m, S\]]])\\
\nonumber
    & =
    4\bL_{S\cA S^{-1}}\bS
    \[[[S,S^{\rT}\]]]
    (2
    \[[[\bTheta,\cP\]]]
    \[[[S^{\rT},S\]]]
    \Gamma_{\cM S^{-1}}
    \[[[I_m, S^{-1}\]]]
    -
    \[[[\cB,\bTheta\]]]
    )
    \[[[I_m, S\]]]\\
\nonumber
    & =
    4\bL_{S\cA S^{-1}}
    \[[[S,S^{\rT}\]]]\bS
    (2
    \[[[\bTheta,\cP\]]]
    \Gamma_{\cM}
    -
    \[[[\cB,\bTheta\]]]
    )
    \[[[I_m, S\]]]\\
\nonumber
    & =
    4
    \[[[S,S^{\rT}\]]]
    \bL_{\cA}
    \bS
    (2
    \[[[\bTheta,\cP\]]]
    \Gamma_{\cM}
    -
    \[[[\cB,\bTheta\]]]
    )
    \[[[I_m, S\]]]\\
\label{dPdMnew}
    & =
    \[[[S,S^{\rT}\]]]
    \d_{\cM}\cP
    \[[[I_m, S\]]],
\end{align}
which is an equivalent form of (\ref{dPdRMnew}).
\end{proof}
%%%%%%%%%%%%%%%%%%%%%%%%%%%%%%%%%%%%%%%%%%%%%%%%%%%%%%%%%%%%%%%%%%%%%%%%%%%%%%%%%%%%%%%%%%%%%%%%%%%

Lemma~\ref{lem:dPdRMnew} simplifies the ``recalculation'' of the Frechet derivative $\Lambda$ under the symplectic similarity transformations of a particular realization of the cascaded oscillators. Indeed, the linear operator $\Lambda$ is modified by applying appropriate linear transformations $\fS_{S^{-1}}$ and $\fP_S$ to its domain $\fR\x \mR^{m\x n}$ and range $\mS_n$ as
\begin{equation*}
    \Lambda_{\fS_S(\bitR, \cM)}
    =
    \fP_S\Lambda_{\bitR, \cM}\fS_{S^{-1}},
\end{equation*}
which is merely a concise representation of (\ref{dPdRnew}) and (\ref{dPdMnew}) or their equivalent form (\ref{dPdRMnew}). Here,
the transformations
\begin{align*}
    \fS_S(\bitR, \cM)
    & :=
    (S^{-\rT}\bitR S^{-1}, \cM S^{-1}),\\
    \fP_S
    & :=
    \[[[S, S^{\rT}\]]]
\end{align*}
are parameterized by the symplectic matrix $S \in \Sp(\bTheta)$ in (\ref{diagS}) and describe group homomorphisms (whereby $\fS_{S^{-1}} = \fS_S^{-1}$).

The following theorem, which is concerned with infinitesimal perturbation analysis of the purity functional (\ref{purity}), applies Lemma~\ref{lem:dPdRMnew} in order to obtain the corresponding   transformations for the partial Frechet derivatives of $V$ in (\ref{V}) with respect to the energy and coupling matrices $R_1, \ldots, R_N$ and $M_1, \ldots, M_N$ of the component oscillators:
\begin{equation}
\label{rhokmuk}
    \rho_k
    :=
    \d_{R_k}V,
    \qquad
    \mu_k
    :=
    \d_{M_k}V.
\end{equation}
Up to a constant factor of $-\frac{1}{2}$,  the matrices $\rho_k$ and $\mu_k$  describe the corresponding logarithmic Frechet derivatives of the purity functional.
Before actually computing $\rho_k$ and $\mu_k$ (which are fairly complicated rational functions of the  entries of the energy and coupling matrices), the theorem shows that, under the symplectic similarity transformations, these derivatives   are at most quadratic polynomials of the transformation matrices.

%%%%%%%%%%%%%%%%%%%%%%%%%%%%%%%%%%%%%%%%%%%%%%%%%%%%%%%%%%%%%%%%%%%%%%%%%%%%%%%%%%%%%%%%%%%%%%%%%%%
\begin{theorem}
\label{th:dVdRMnew}
Suppose the matrices $A_1, \ldots, A_N$ in (\ref{Ak}) are Hurwitz.
Then, for any symplectic matrix $S\in \Sp(\bTheta)$ in (\ref{diagS}),  the Frechet derivatives of the functional $V$ with respect to the energy and coupling matrices $R_1, \ldots, R_N$ and $M_1,\ldots, M_N$ in (\ref{rhokmuk})  are transformed as
\begin{equation}
\label{rhokmuknew}
    \rho_k
    \mapsto
    S_k \rho_k S_k^{\rT},
    \qquad
    \mu_k
    \mapsto
    \mu_k S_k^{\rT}
\end{equation}
for all $k=1,\ldots, N$.
\hfill$\square$
\end{theorem}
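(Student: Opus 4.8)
The plan is to reduce the statement to Lemma~\ref{lem:dPdRMnew} via the chain rule, using that $V=\ln\det\cP$ depends on the energy and coupling matrices only through $\cP$. First I would record the elementary log-determinant derivative: since $\cP$ is symmetric, $\d_\cP V = \cP^{-1}$, so the first variation of $V$ is the Frobenius pairing
\[
    \delta V = \bra \cP^{-1}, \delta\cP\ket = \bra \cP^{-1}, \Lambda(\delta\bitR,\delta\cM)\ket,
\]
where the second equality invokes (\ref{dPdRM}). Matching this against the expansion $\delta V = \sum_k \bra\rho_k,\delta R_k\ket + \sum_k\bra\mu_k,\delta M_k\ket$ identifies the partial derivatives (\ref{rhokmuk}) as the components, relative to the Frobenius inner product, of the gradient of $V$; in particular each $\rho_k$ is symmetric because $R_k$ ranges over $\mS_{n_k}$.

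Next I would evaluate this first variation at the transformed pair $\fS_S(\bitR,\cM) = (S^{-\rT}\bitR S^{-1},\cM S^{-1})$. Two ingredients combine here: the covariance transforms as $\cP\mapsto S\cP S^{\rT}$ by (\ref{cPnew}), whence $\cP^{-1}\mapsto S^{-\rT}\cP^{-1}S^{-1}$; and $\Lambda$ transforms according to (\ref{dPdRMnew}). Substituting both and invoking the adjoint identity $\bra A, SBS^{\rT}\ket = \bra S^{\rT}AS, B\ket$ (a consequence of the cyclic property of the trace), the factors $S$ attached to $\cP^{-1}$ cancel exactly those carried by $\Lambda$, so that the first variation at the transformed point collapses to
\[
    \delta V\big|_{\fS_S(\bitR,\cM)} = \bra \cP^{-1}, \Lambda_{\bitR,\cM}(S^{\rT}(\delta\bitR)S,\,(\delta\cM)S)\ket,
\]
i.e. the original first variation re-evaluated on the block-transformed variations.

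The final step is to read off the transformed gradient. Because $S=\diag_{1\<k\<N}(S_k)$ is block-diagonal, the argument $S^{\rT}(\delta\bitR)S$ is block-diagonal with blocks $S_k^{\rT}(\delta R_k)S_k$, while $(\delta\cM)S$ has blocks $(\delta M_k)S_k$. Applying once more the Frobenius adjoint relations $\bra\rho_k, S_k^{\rT}(\delta R_k)S_k\ket = \bra S_k\rho_k S_k^{\rT},\delta R_k\ket$ and $\bra\mu_k,(\delta M_k)S_k\ket = \bra\mu_k S_k^{\rT},\delta M_k\ket$, and comparing coefficients of the independent variations $\delta R_k$ and $\delta M_k$, yields precisely (\ref{rhokmuknew}). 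I expect the only genuine obstacle to be the bookkeeping: keeping the adjoints of the sandwich maps straight inside the Frobenius pairing and verifying that the block-diagonal structure of $S$ truly localizes the action onto each $S_k$ (this also confirms that $S_k\rho_k S_k^{\rT}$ is the correct congruence and stays symmetric). The quadratic-in-$S_k$ dependence of $\rho_k$ and the linear dependence of $\mu_k$ then follow by inspection, which is the structural fact exploited by the optimization in the later sections.
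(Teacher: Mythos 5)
Your proposal is correct and follows essentially the same route as the paper: both reduce the claim to Lemma~\ref{lem:dPdRMnew} via the chain rule with $\d_{\cP}\ln\det\cP=\cP^{-1}$, and both hinge on the same cancellation of the symplectic factors between the transformed $\cP^{-1}=(S\cP S^{\rT})^{-1}$ and the transformed $\Lambda$, before reading off the blockwise congruences from the block-diagonal structure of $S$. The paper phrases this in terms of the adjoint operators $\d_{\bitR}\cP^{\dagger}$ and $\d_{\cM}\cP^{\dagger}$ and the sandwich-operator identities, whereas you keep everything inside the Frobenius pairing, but the underlying argument is identical.
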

%%%%%%%%%%%%%%%%%%%%%%%%%%%%%%%%%%%%%%%%%%%%%%%%%%%%%%%%%%%%%%%%%%%%%%%%%%%%%%%%%%%%%%%%%%%%%%%%%%%
\begin{proof}
We will combine Lemma~\ref{lem:dPdRMnew}  with  the following Frechet derivative on the Hilbert space $\mS_n$ (see, for example, \cite{H_2008,HJ_2007}):
\begin{equation}
\label{dlndet}
    \d_{\chi}
    \ln
    \det \chi
    =
    \chi^{-1}.
\end{equation}
This allows the first variation of the functional $V$ in (\ref{V}) (as a composite function $(\bitR, \cM)\mapsto \cP \mapsto \ln\det \cP$) to be computed as
\begin{align*}
\nonumber
    \delta V
    & =
    \bra \cP^{-1}, \delta \cP\ket\\
\nonumber
    & =
    \bra \cP^{-1}, \Lambda(\delta\bitR, \delta \cM)\ket\\
\nonumber
    & =
    \bra \cP^{-1},     \d_{\bitR}\cP(\delta \bitR)
    +
    \d_{\cM}\cP(\delta \cM)\ket\\
    & =
    \bra
        \d_{\bitR}\cP^{\dagger}(\cP^{-1}),
        \delta \bitR
    \ket
    +
    \bra
        \d_{\cM}\cP^{\dagger}(\cP^{-1}),
        \delta \cM
    \ket,
\end{align*}
whence
\begin{align}
\label{dVdR}
    \d_{\bitR} V
     & =
     \diag_{1\< k\< N}(\rho_k)
     =
        \d_{\bitR}\cP^{\dagger}(\cP^{-1}),\\
\label{dVdM}
    \d_{\cM} V
     & =
     \begin{bmatrix}
     \mu_1 & \ldots & \mu_N
     \end{bmatrix}
     =
        \d_{\cM}\cP^{\dagger}(\cP^{-1})
\end{align}
in view of (\ref{rhokmuk}).
From the transformations (\ref{dPdRnew}) and (\ref{dPdMnew}), it follows that the corresponding adjoint operators $\d_{\bitR}\cP^{\dagger}$ and $\d_{\cM}\cP^{\dagger}$ are modified as
\begin{align}
\label{dPdR+new}
    \d_{\bitR}\cP^{\dagger}
    & \mapsto
    \[[[S,S^{\rT}\]]]
    \d_{\bitR}\cP^{\dagger}
    \[[[S^{\rT},S\]]],\\
\label{dPdM+new}
    \d_{\cM}\cP^{\dagger}
    & \mapsto
    \[[[I_m, S^{\rT}\]]]
    \d_{\cM}\cP^{\dagger}
    \[[[S^{\rT},S\]]],
\end{align}
where use is also made of the  relation
\begin{equation}
\label{sanddagger}
    \[[[\alpha, \beta\]]]^{\dagger} = \[[[\alpha^{\rT},\beta^{\rT}\]]]
\end{equation}
for the operator (\ref{sand}) which yields $\[[[S,S^{\rT}\]]]^{\dagger} = \[[[S^{\rT},S\]]]$ and $\[[[I_m,S\]]]^{\dagger} = \[[[I_m,S^{\rT}\]]]$.
A combination of (\ref{dPdR+new}) and (\ref{dPdM+new}) with the transformation (\ref{SPS}) implies that the Frechet derivatives in (\ref{dVdR}) and (\ref{dVdM}) are transformed as
\begin{align}
\nonumber
  \d_{\bitR}V
  & \mapsto
    (\[[[S,S^{\rT}\]]]
    \d_{\bitR}\cP^{\dagger}
    \[[[S^{\rT},S\]]])  ((S\cP S^{\rT})^{-1})\\
\nonumber
  & =
    S
    \d_{\bitR}\cP^{\dagger}
    (\cP^{-1}) S^{\rT}\\
\label{dVdRnew}
  & =
  S\d_{\bitR}V S^{\rT},\\
\nonumber
  \d_{\cM}V
  & \mapsto
    (\[[[I_m, S^{\rT}\]]]
    \d_{\cM}\cP^{\dagger}
    \[[[S^{\rT},S\]]])
      ((S\cP S^{\rT})^{-1})\\
\nonumber
  & =
    \d_{\cM}\cP^{\dagger}(\cP^{-1})S^{\rT}\\
\label{dVdMnew}
  & =
  \d_{\cM}V S^{\rT}.
\end{align}
The relations (\ref{rhokmuknew}) can now be obtained as a block-wise form of (\ref{dVdRnew}) and (\ref{dVdMnew}) due to the block diagonal structure of the matrices $S$ in (\ref{diagS}) and  $\d_{\bitR}V$ in (\ref{dVdR}) and the block partitioning of the matrix $\d_{\cM}V$ in (\ref{dVdM}).
\end{proof}
%%%%%%%%%%%%%%%%%%%%%%%%%%%%%%%%%%%%%%%%%%%%%%%%%%%%%%%%%%%%%%%%%%%%%%%%%%%%%%%%%%%%%%%%%%%%%%%%%%%

Note that the proof of Theorem~\ref{th:dVdRMnew} employs not only Lemma~\ref{lem:dPdRMnew} but also the presence of the inverse matrix $\cP^{-1}$ in (\ref{dVdRnew}) and (\ref{dVdMnew}) as a consequence of (\ref{dlndet}) due to the specific structure of the purity functional, leading to the self-cancellation of the matrix $S$ in
$$
    \[[[S^{\rT},S\]]]
    ((S\cP S^{\rT})^{-1}) = \cP^{-1}.
$$
Similarly to the steady-state covariances themselves, the relations (\ref{rhokmuknew})  allow the infinitesimal perturbation analysis of the purity functional for a particular realization of the quantum system to be easily modified for an equivalent realization.

In view of the symplectic property of the matrices $S_k \in \Sp(\Theta_k)$, the transformation (\ref{rhokmuknew}) also shows that the partial Frechet derivatives $\rho_k$ and $\mu_k$ of the functional $V$ can not be made arbitrarily small. Moreover, since the norm  of such matrices $S_k$ can be arbitrarily large, their inappropriate choice can make the purity functional highly sensitive to the parameters of the cascaded oscillators. In Section~\ref{sec:opt}, a criterion will be specified for the  minimization of this sensitivity, thus putting the choice of the symplectic matrices $S_1, \ldots, S_N$ on a rational footing.

We will now apply Lemma~\ref{lem:dPdRM} and elements of the proof of Theorem~\ref{th:dVdRMnew} to computing the logarithmic Frechet derivatives of the purity functional. The formulation of the following theorem employs the observability Gramian $\cQ:= (\cQ_{jk})_{1\< j,k\< N}$ of the pair $(\cA,\cP^{-1/2})$, which is split into blocks  $\cQ_{jk} \in \mR^{n_j\x n_k}$ and satisfies the  ALE
\begin{equation}
\label{cQALE}
    \cA^{\rT} \cQ + \cQ\cA + \cP^{-1} = 0,
\end{equation}
where, as before, $\cP$ is the controllability Gramian of $(\cA,\cB)$ from (\ref{cPALE}). Also, use will be made of the \textit{Hankelian} $\cH := (\cH_{jk})_{1\< j,k\< N}$ for the triple $(\cA,\cB,\cP^{-1/2})$, consisting of the blocks $\cH_{jk} \in \mR^{n_j\x n_k}$ and defined by
\begin{equation}
\label{cH}
    \cH := \cQ\cP.
\end{equation}
The matrix $\cH$ is diagonalizable and its eigenvalues are the squared Hankel singular values of the triple $(\cA,\cB,\cP^{-1/2})$.

%%%%%%%%%%%%%%%%%%%%%%%%%%%%%%%%%%%%%%%%%%%%%%%%%%%%%%%%%%%%%%%%%%%%%%%%%%%%%%%%%%%%%%%%%%%%%%%%%%%%
\begin{theorem}
\label{th:dVdRM}
Suppose the matrices $A_1, \ldots, A_N$ in (\ref{Ak}) are Hurwitz. Then the Frechet derivatives (\ref{rhokmuk}) can be  computed as
\begin{align}
\label{dVdRk}
    \rho_k
    & =
    -4 \bS(\Theta_k \cH_{kk}),\\
\label{dVdMk}
    \mu_k
    & =
         4
    \left(\cB^{\rT} \cQ_{\bullet k} \Theta_k
    +
    2J
    \left(
    M_k \bA(\Theta_k \cH_{kk})
    +
    \sum_{j=k+1}^N
    M_j \Theta_j \cH_{jk}
    +
    \sum_{j=1}^{k-1}
    M_j \cH_{kj}^{\rT}\Theta_k
    \right)
    \right)
\end{align}
for all $k=1,\ldots, N$.
Here, $\cQ_{\bullet k}:= {\small\begin{bmatrix}\cQ_{1k}\\ \vdots\\ \cQ_{Nk}\end{bmatrix}} \in \mR^{n\x n_k}$ denotes the $k$th block-column of the observability Gramian $\cQ$ from (\ref{cQALE}), and  $\cH_{kk}$ is the $k$th diagonal block of the Hankelian $\cH$ from (\ref{cH}).
\hfill$\square$
\end{theorem}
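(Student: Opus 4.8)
The plan is to obtain $\rho_k$ and $\mu_k$ by adjoining the operator factorizations of $\d_{\bitR}\cP$ and $\d_{\cM}\cP$ supplied by Lemma~\ref{lem:dPdRM} and evaluating the results at the single matrix $\cP^{-1}$. Indeed, combining the chain rule with the log-determinant derivative (\ref{dlndet}) exactly as in the proof of Theorem~\ref{th:dVdRMnew} gives (\ref{dVdR}) and (\ref{dVdM}), so that $\diag_{1\< k\< N}(\rho_k) = \d_{\bitR}\cP^{\dagger}(\cP^{-1})$ and $[\mu_1\ \cdots\ \mu_N] = \d_{\cM}\cP^{\dagger}(\cP^{-1})$. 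Thus the whole theorem reduces to forming the adjoints of the composite operators in (\ref{dPdR}) and (\ref{dPdM}) and letting them act on $\cP^{-1}$.

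The adjoints are assembled from three elementary rules: the Lyapunov operator satisfies $\bL_{\cA}^{\dagger} = \bL_{\cA^{\rT}}$ (immediate from (\ref{bL}) and the cyclic property of the trace), the symmetrizer $\bS$ in (\ref{bSbA}) is self-adjoint, and the sandwich operator obeys (\ref{sanddagger}). The decisive simplification is that $\bL_{\cA^{\rT}}(\cP^{-1})$ is precisely the observability Gramian $\cQ$ of (\ref{cQALE}), after which $\[[[\bTheta^{\rT},\cP\]]](\cQ) = \bTheta^{\rT}\cQ\cP = -\bTheta\cH$ by the antisymmetry of $\bTheta$ and the definition (\ref{cH}) of the Hankelian. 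For the energy part this already finishes the argument: adjoining (\ref{dPdR}) gives $\d_{\bitR}\cP^{\dagger}(\cP^{-1}) = 4\[[[\bTheta^{\rT},\cP\]]]\bS(\cQ) = -4\bTheta\cH$, and since $\d_{\bitR}V$ takes values in the block-diagonal symmetric space $\fR$, its adjoint projects onto $\fR$; reading off the $k$th diagonal block and symmetrizing yields $\rho_k = -4\bS(\Theta_k\cH_{kk})$, which is (\ref{dVdRk}).

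For the coupling part the same adjoint rules reduce $\d_{\cM}\cP^{\dagger}(\cP^{-1})$ to $4(\cB^{\rT}\cQ\bTheta - 2\Gamma^{\dagger}(\bTheta\cH))$, whose $k$th block-column must be extracted. The first term contributes $4\cB^{\rT}\cQ_{\bullet k}\Theta_k$ because $\bTheta$ is block-diagonal. The \emph{main obstacle} is computing the adjoint $\Gamma^{\dagger}$ of the operator defined block-wise by (\ref{Gammajk}): a given variation $\delta M_k$ enters $\Gamma(\delta\cM)$ in three distinct families of blocks, namely the diagonal block $\Gamma_{kk}$ through $\bA(M_k^{\rT}J\delta M_k)$, the strictly sub-diagonal blocks $\Gamma_{jk}$ with $j>k$ through $M_j^{\rT}J\delta M_k$, and the blocks $\Gamma_{kl}$ with $l<k$ through $(\delta M_k)^{\rT}JM_l$. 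Consequently the $k$th block-column of $\Gamma^{\dagger}(Y)$ collects three separate contributions, obtained by pairing each block against $Y_{kk}$, $Y_{jk}$, $Y_{kl}$ and using the self-adjointness of $\bA$ together with $J^{\rT} = -J$.

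Substituting $Y = \bTheta\cH$, so that $Y_{jk} = \Theta_j\cH_{jk}$, and using the antisymmetry $\Theta_k^{\rT} = -\Theta_k$, these three contributions turn into $M_k\bA(\Theta_k\cH_{kk})$, $\sum_{j>k}M_j\Theta_j\cH_{jk}$ and $\sum_{j<k}M_j\cH_{kj}^{\rT}\Theta_k$, each premultiplied by $J$ once the factor $-2$ in front of $\Gamma^{\dagger}(\bTheta\cH)$ is accounted for; combining them with the term $4\cB^{\rT}\cQ_{\bullet k}\Theta_k$ produces (\ref{dVdMk}). The only genuinely delicate point throughout is the bookkeeping of these three families of blocks and the signs generated by transposing $J$ and $\Theta_k$ in the adjoint pairing; everything else is mechanical once the identifications $\bL_{\cA^{\rT}}(\cP^{-1}) = \cQ$ and $\cQ\cP = \cH$ are in place.
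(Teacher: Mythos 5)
Your proposal is correct and follows essentially the same route as the paper: it passes to the adjoints of the operator factorizations in Lemma~\ref{lem:dPdRM} via $\bL_{\cA}^{\dagger}=\bL_{\cA^{\rT}}$, self-adjointness of $\bS$ and (\ref{sanddagger}), identifies $\bL_{\cA^{\rT}}(\cP^{-1})=\cQ$ and $\cQ\cP=\cH$, projects onto the block-diagonal symmetric subspace for $\rho_k$, and computes $\Gamma^{\dagger}$ block-wise from the three families of blocks in (\ref{Gammajk}) with the correct sign bookkeeping. This matches the paper's proof, which merely writes the $\Gamma^{\dagger}$ computation out more explicitly via the auxiliary operator $\bitPhi$ in (\ref{bitPhijk}).
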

%%%%%%%%%%%%%%%%%%%%%%%%%%%%%%%%%%%%%%%%%%%%%%%%%%%%%%%%%%%%%%%%%%%%%%%%%%%%%%%%%%%%%%%%%%%%%%%%%%%%
\begin{proof}
The representations of the partial Frechet derivatives in (\ref{dPdR}) and (\ref{dPdM}) allow
their adjoint operators $\d_{\bitR}\cP^{\dagger}$ and $\d_{\cM}\cP^{\dagger}$ to be computed as
\begin{align}
\nonumber
    \d_{\bitR}\cP^{\dagger}
    & =
    4\bitSigma\[[[\bTheta, \cP\]]]^{\dagger}\bS^{\dagger}\bL_{\cA}^{\dagger}\\
\label{dPdR+}
    & =
    -4\bitSigma\[[[\bTheta, \cP\]]]\bS\bL_{\cA^{\rT}},\\
\nonumber
    \d_{\cM}\cP^{\dagger}
    & =
    4
    (2\Gamma^{\dagger}\[[[\bTheta, \cP\]]]^{\dagger}
    -
    \[[[\cB,\bTheta\]]]^{\dagger})
    \bS^{\dagger}
    \bL_{\cA}^{\dagger}\\
\label{dPdM+}
    & =
    4
    (\[[[\cB^{\rT},\bTheta\]]]
    -
    2\Gamma^{\dagger}\[[[\bTheta, \cP\]]]
    )
    \bS
    \bL_{\cA^{\rT}},
\end{align}
where use is made of self-adjointness of the symmetrizer $\bS$ (which is the orthogonal projection onto $\mS_n$) together with the relation (\ref{sanddagger}) which leads to $\bL_{\alpha}^{\dagger} = \bL_{\alpha^{\rT}}$ and is combined with the symmetry of $\cP$ and antisymmetry of $\bTheta$. Here, $\bitSigma$ denotes the orthogonal projection of $\mR^{n\x n}$ onto the subspace $\fR$ of block-diagonal matrices $\bitR$ in (\ref{bitR}), which maps an arbitrary matrix $\zeta:= (\zeta_{jk})_{1\< j,k\< N} \in \mR^{n\x n}$ with blocks $\zeta_{jk} \in \mR^{n_j\x n_k}$ to the matrix
\begin{equation}
\label{bitSigma}
    \bitSigma(\zeta):= \diag_{1\< k\< N}(\bS(\zeta_{kk})).
\end{equation}
Substitution of (\ref{dPdR+}) and (\ref{bitSigma}) into (\ref{dVdR}) leads to
\begin{align}
\nonumber
    \d_{\bitR} V
     & =
     -4\bitSigma(\[[[\bTheta, \cP\]]](\bS(\bL_{\cA^{\rT}}(\cP^{-1}))))\\
\nonumber
     & =
     -4\bitSigma(\[[[\bTheta, \cP\]]](\cQ))\\
\nonumber
     & =
     -4\bitSigma(\bTheta \cH)\\
\label{dVdR1}
     & =
     -4\diag_{1\< k \< N}(\bS(\Theta_k \cH_{kk})),
\end{align}
where use is made of the symmetric matrix $\cQ = \bL_{\cA^{\rT}}(\cP^{-1})$ from (\ref{cQALE}) along with the Hankelian $\cH$ from (\ref{cH}) and the block diagonal structure of the CCR matrix $\bTheta$ in (\ref{Theta}). The representation (\ref{dVdRk}) is now obtained by considering the diagonal blocks of (\ref{dVdR1}). By a similar reasoning, substitution of (\ref{dPdM+}) into (\ref{dVdM}) leads to
\begin{align}
\nonumber
    \d_{\cM} V
     & =
         4
    (\[[[\cB^{\rT},\bTheta\]]]
    -
    2\Gamma^{\dagger}\[[[\bTheta, \cP\]]]
    )
    (\bS
    (\bL_{\cA^{\rT}}(\cP^{-1})))\\
\nonumber
    & =
         4
    (\[[[\cB^{\rT},\bTheta\]]]
    -
    2\Gamma^{\dagger}\[[[\bTheta, \cP\]]]
    )
    (\cQ)    \\
\label{dVdM1}
    & =
         4
    (\cB^{\rT} \cQ \bTheta
    -
    2\Gamma^{\dagger}(\bTheta \cH)).
\end{align}
The structure of the operator $\Gamma$ in (\ref{Gammajk}) implies that its adjoint $\Gamma^{\dagger}$ acts on a matrix  $\alpha:= (\alpha_{jk})_{1\< j,k\< N} \in \mR^{n\x n}$ with blocks $\alpha_{jk} \in \mR^{n_j\x n_k}$ as
\begin{align}
\nonumber
    \Gamma^{\dagger}(\alpha)
    = &
    \sum_{j,k=1}^N
    \Gamma_{jk}^{\dagger}(\alpha_{jk})\\
\nonumber
    = &
    -
    \sum_{k=1}^N
    \[[[JM_k, \begin{bmatrix}0 & I_{n_k} & 0\end{bmatrix}\]]]    (\bA(\alpha_{kk}))\\
\nonumber
    &
    -
    \sum_{N\> j>k\> 1}
    \left(
        \[[[I_m, \begin{bmatrix}0 & I_{n_j} & 0\end{bmatrix}\]]]
        \bT
        \[[[I_{n_j}, M_k^{\rT}J\]]]
        +
        \[[[JM_j, \begin{bmatrix}0 & I_{n_k} & 0\end{bmatrix}\]]]
    \right)(\alpha_{jk})\\
\label{Gammadagger}
    = &
    -J\cM \bitPhi(\alpha),
\end{align}
where $\bT$ denotes the operator of matrix transpose. Here, $\bitPhi$ is a linear operator which maps the matrix $\alpha$ to $\bitPhi(\alpha):= (\bitPhi_{jk}(\alpha))_{1\< j,k\< N} \in \mA_n$ whose blocks are given by
\begin{equation}
\label{bitPhijk}
    \bitPhi_{jk}(\alpha) =
    \left\{
    \begin{matrix}
        \bA(\alpha_{kk}) & {\rm if} & j=k\\
        \alpha_{jk} & {\rm if} & j>k\\
        -\alpha_{kj}^{\rT}& {\rm if} & j<k
    \end{matrix}
    \right.
\end{equation}
and depend only on the block lower triangular part of $\alpha$ (including its diagonal blocks). Also, use has been made of the following representation of the operators in (\ref{Gammajk}):
\begin{equation*}
\label{Gammajksand}
    \Gamma_{jk} =
    \left\{
    \begin{matrix}
        \bA \[[[M_k^{\rT} J, {\small\begin{bmatrix}0 \\ I_{n_k} \\ 0\end{bmatrix}}\]]]
        & {\rm if} & j=k\\
        \[[[I_{n_j}, JM_k\]]]\bT\[[[I_m, {\small\begin{bmatrix}0 \\ I_{n_j} \\ 0\end{bmatrix}}\]]]
        + \[[[M_j^{\rT} J, {\small\begin{bmatrix}0 \\ I_{n_k} \\ 0\end{bmatrix}}\]]] & {\rm if} & j>k\\
        0& {\rm if} & j<k
    \end{matrix}
    \right.,
\end{equation*}
in combination with the antisymmetry of the matrix $J$ and the fact that both the antisymmetrizer $\bA$ (which is an orthogonal projection) and $\bT$ are   self-adjoint operators.\footnote{More precisely, the transpose $\bT_{r,s}$, applied to $(r\x s)$-matrices, satisfies $\bT_{r,s}^{\dagger} = \bT_{s,r}$.} According to (\ref{bitPhijk}), the $k$th block of the image matrix in (\ref{Gammadagger}) is computed as
\begin{equation}
\label{Gammadaggerk}
    (\Gamma^{\dagger}(\alpha))_k
    =
    -J
    \left(
    M_k \bA(\alpha_{kk})
    +
    \sum_{j=k+1}^N
    M_j \alpha_{jk}
    -
    \sum_{j=1}^{k-1}
    M_j \alpha_{kj}^{\rT}
    \right)
\end{equation}
for all $k=1,\ldots, N$.  Therefore, by applying (\ref{Gammadaggerk}) to the matrix $\bTheta\cH$ in (\ref{dVdM1}) and recalling the block diagonal structure and antisymmetry of $\bTheta$ in (\ref{Theta}), it follows that
\begin{align*}
\nonumber
    \mu_k
     & =
         4
    (\cB^{\rT} \cQ_{\bullet k} \Theta_k
    -
    2(\Gamma^{\dagger}(\bTheta \cH))_k)\\
     & =
         4
    \left(\cB^{\rT} \cQ_{\bullet k} \Theta_k
    +
    2J
    \left(
    M_k \bA(\Theta_k \cH_{kk})
    +
    \sum_{j=k+1}^N
    M_j \Theta_j \cH_{jk}
    +
    \sum_{j=1}^{k-1}
    M_j \cH_{kj}^{\rT}\Theta_k
    \right)
    \right),
\end{align*}
which establishes (\ref{dVdMk}).
\end{proof}
%%%%%%%%%%%%%%%%%%%%%%%%%%%%%%%%%%%%%%%%%%%%%%%%%%%%%%%%%%%%%%%%%%%%%%%%%%%%%%%%%%%%%%%%%%%%%%%%%%%%

Due to the block lower triangular structure of the matrix $\cA$ (that is, block upper triangularity of $\cA^{\rT}$),  the ALE (\ref{cQALE}) of Theorem~\ref{th:dVdRM} for the observability Gramian $\cQ$ can also be solved in a recursive fashion, similar to the computation of the controllability Gramian $\cP$ in Lemma~\ref{lem:cP}. However, in contrast to $\cP$,  the blocks of the matrix $\cQ$ satisfy recurrence equations which unfold backwards (that is, from the last oscillator in the cascade towards the first one), thus being reminiscent of Bellman's dynamic programming equations. Note that  here  the role of time is played by the spatial parameter which numbers the oscillators in the cascade.

%%%%%%%%%%%%%%%%%%%%%%%%%%%%%%%%%%%%%%%%%%%%%%%%%%%%%%%%%%%%%%%%%%%%%%%%%%%%%%%%%%%%%%%%%%%%%%%%%%%
\section{Recursive computation of the Frechet derivatives}
\label{sec:recder}
%%%%%%%%%%%%%%%%%%%%%%%%%%%%%%%%%%%%%%%%%%%%%%%%%%%%%%%%%%%%%%%%%%%%%%%%%%%%%%%%%%%%%%%%%%%%%%%%%%%

Although Theorem~\ref{th:dVdRM} provides a complete set of equations for computing the Frechet derivatives $\rho_1, \ldots, \rho_N$ and $\mu_1, \ldots, \mu_N$ of the purity functional in (\ref{rhokmuk}), we will outline its  recursive version which takes into account the cascade structure of the system to a fuller extent.
For what follows,  the matrix pairs
\begin{equation}
\label{Ek}
  E_k:=(R_k, M_k),
  \qquad
  k=1,\ldots, N,
\end{equation}
which specify  the energetics of individual oscillators and take values in the corresponding spaces
$\fE_k:= \mS_{n_k}\x \mR^{m\x n_k}$, are assembled into an $N$-tuple
\begin{equation}
\label{E}
  E:= (E_1, \ldots, E_N).
\end{equation}
Accordingly, $E$ takes values in the Hilbert space
$
    \fE:= \fE_1\x \ldots \x \fE_N
$
with the direct-sum inner product $\bra\cdot, \cdot\ket_{\fE}$ (generated from the Frobenius inner products of matrices). The Frechet derivative  of $V$ (as a composite function of $E$) takes the form
\begin{equation}
\label{dVdE}
    \d_E V
    =
    (\d_{E_1}V, \ldots, \d_{E_N}V)
    =
    \d_E\cP^{\dagger} (\cP^{-1})
\end{equation}
which combines (\ref{rhokmuk}), (\ref{dVdR}) and (\ref{dVdM}). Note that (\ref{dVdE}) is obtained without using the spatial causality mentioned in Section~\ref{sec:reccov}. This property can be taken into account in order to gain a computational advantage for long cascades.
More precisely, the decomposition (\ref{VV}) of the functional $V$ and the fact that $V_1, \ldots, V_{k-1}$ are independent of the matrix pair $E_k$ in (\ref{Ek}) imply that
\begin{align}
\nonumber
    \d_{E_k} V
    & =
    (\rho_k, \mu_k)\\
\nonumber
    & =
    \d_{E_k}
    \left(
        V_{\> k}
        +
        \sum_{j=1}^{k-1}
        V_j
    \right)\\
\label{dVdEk}
    & =
    \d_{E_k}
        V_{\> k}
        =
    (\d_{R_k}
        V_{\> k},
    \d_{M_k}
        V_{\> k})
\end{align}
for all $k=1, \ldots, N$, where
\begin{equation}
\label{Vtail}
    V_{\> k}
    :=
    \sum_{j=k}^N V_j
    =
    \ln\det \Pi_{\> k}
\end{equation}
is the ``tail'' part of $V$. Here, $\Pi_{\> k}$ denotes the Schur complement of the block $\cP_{k-1}$ in the matrix $\cP$ in (\ref{cPN}) computed as
\begin{equation}
\label{Pitail}
    \Pi_{\> k}
    :=
    \cP_{\> k} - T_kQ_{\> k}^{\rT}
     =
    \cov(\Xi_{\>k} \mid \Xi_{k-1}),
\end{equation}
with
\begin{equation}
\label{Tk}
    T_k
    :=
    Q_{\> k}\cP_{k-1}^{-1},
\end{equation}
according to the block partitioning
\begin{equation}
\label{cPtail}
    \cP
    =
    \begin{bmatrix}
    \cP_{k-1}& Q_{\>k}^{\rT}\\
    Q_{\>k} & \cP_{\> k}
    \end{bmatrix}
\end{equation}
which is similar to (\ref{cPk}).
The second equality in (\ref{Pitail}) provides a probabilistic interpretation of $\Pi_{\> k}$ in terms of the auxiliary classical Gaussian random vectors $\xi_1, \ldots, \xi_N$ of Section~\ref{sec:reccov}, with
\begin{equation}
\label{Xi>=k}
    \Xi_{\>k}
    :=
    \begin{bmatrix}
        \xi_k\\
        \vdots\\
        \xi_N
    \end{bmatrix}
    =
    \begin{bmatrix}
        \xi_k\\
        \Xi_{\> k+1}
    \end{bmatrix}.
\end{equation}
Accordingly, the blocks $Q_{\>k}$ and $\cP_{\> k}$ in (\ref{cPtail}) are the covariance matrices for the random vectors $\Xi_{k-1}$ in (\ref{Xik}) and $\Xi_{\>k}$ in (\ref{Xi>=k}):
\begin{align}
\label{Q>=k}
    Q_{\>k}
    & :=
    \begin{bmatrix}
    P_{k1} & \cdots & P_{k,k-1}\\
    \vdots & \ddots & \vdots\\
    P_{N1} & \cdots & P_{N,k-1}
    \end{bmatrix}
    =
    \cov(\Xi_{\> k}, \Xi_{k-1}),\\
\label{P>=k}
    \cP_{\>k}
    & :=
    \begin{bmatrix}
    P_{kk} & \cdots & P_{kN}\\
    \vdots & \ddots & \vdots\\
    P_{Nk} & \cdots & P_{NN}
    \end{bmatrix}
    =
    \cov(\Xi_{\> k}).
\end{align}
Note that the first block-row of the matrix $Q_{\> k}$ in (\ref{Q>=k}) is the matrix $Q_k$ given by (\ref{Qk}).
Similarly to (\ref{xihat}),  the probabilistic meaning of the matrix $T_k$ in (\ref{Tk}) is described in terms of classical conditional expectations as
\begin{equation}
\label{Xihat}
    \wh{\Xi}_{\> k}
    :=
    \bE(\Xi_{\>k}\mid \Xi_{k-1})
    =
    T_k \Xi_{k-1}.
\end{equation}
In particular, in view of (\ref{P>=k}),
\begin{equation}
\label{Pitail1}
    \Pi_{\>1} = \cov(\Xi_{\> 1}) = \cP
\end{equation}
is the unconditional covariance matrix of the vector $\Xi_{\> 1}=\Xi_N$. The subsequent Schur complements $\Pi_{\>2}, \ldots, \Pi_{\> N} = \Pi_N$ are amenable to a recursive computation as follows.

%%%%%%%%%%%%%%%%%%%%%%%%%%%%%%%%%%%%%%%%%%%%%%%%%%%%%%%%%%%%%%%%%%%%%%%%%%%%%%%%%%%%%%%%%%%%%%%%%%%%%%%%%%%
\begin{lemma}
\label{lem:Pinext}
Suppose the matrices $A_1, \ldots, A_N$ in (\ref{Ak}) are Hurwitz. Then
the Schur complement in (\ref{Pitail}) satisfies the recurrence equation
\begin{equation}
\label{Pitailnext}
    \Pi_{\> k}
    =
    \alpha_k - \beta_k \gamma_k^{-1} \beta_k^{\rT}
\end{equation}
for   all $k = 2, \ldots, N$, where the matrices
\begin{align}
\label{alphak}
    \alpha_k
    & :=
    \cov(\Xi_{\> k} \mid \Xi_{k-2}),\\
\label{betak}
    \beta_k
    & :=
    \cov(\Xi_{\> k}, \xi_{k-1} \mid \Xi_{k-2}),\\
\label{gammak}
    \gamma_k
    & :=
    \cov(\xi_{k-1} \mid \Xi_{k-2})
\end{align}
are submatrices of the preceding Schur complement
\begin{equation}
\label{Piprec}
    \Pi_{\> k-1}
    =
    \cov(\Xi_{\> k-1} \mid \Xi_{k-2})
    =
    \begin{bmatrix}
        \gamma_k & \beta_k^{\rT}\\
        \beta_k & \alpha_k
    \end{bmatrix},
\end{equation}
and the initial condition is given by (\ref{Pitail1}). \hfill$\square$
\end{lemma}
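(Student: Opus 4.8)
The plan is to derive the recurrence (\ref{Pitailnext}) directly from the tower property of conditional covariance matrices for jointly Gaussian random vectors, using the observation that the conditioning history splits as $\Xi_{k-1} = \begin{bmatrix}\Xi_{k-2}\\ \xi_{k-1}\end{bmatrix}$. Thus conditioning on $\Xi_{k-1}$ can be carried out in two stages --- first on the shorter history $\Xi_{k-2}$, and then on the extra variable $\xi_{k-1}$ --- which is precisely the mechanism that turns one conditioning step into a Schur complement.

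First I would record the block decomposition (\ref{Piprec}). Since $\Xi_{\> k-1} = \begin{bmatrix}\xi_{k-1}\\ \Xi_{\> k}\end{bmatrix}$ in the notation of (\ref{Xi>=k}), the conditional covariance matrix $\Pi_{\> k-1} = \cov(\Xi_{\> k-1}\mid \Xi_{k-2})$ partitions conformally, its diagonal blocks being the conditional self-covariances $\gamma_k$ of $\xi_{k-1}$ and $\alpha_k$ of $\Xi_{\> k}$ given $\Xi_{k-2}$, and its off-diagonal block being their conditional cross-covariance $\beta_k$. This matches the definitions (\ref{alphak})--(\ref{gammak}) and establishes (\ref{Piprec}).

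Next I would invoke the standard Gaussian conditioning identity in its iterated form: for jointly Gaussian $U$, $V$, $W$,
\[
    \cov(U\mid V, W) = \cov(U\mid V) - \cov(U, W\mid V)\,[\cov(W\mid V)]^{-1}\cov(W, U\mid V),
\]
which is simply the Schur-complement formula applied to the (Gaussian) conditional distribution of $(U, W)$ given $V$. Taking $U := \Xi_{\> k}$, $V := \Xi_{k-2}$, $W := \xi_{k-1}$, the stacked pair $(V, W)$ equals the full history $\Xi_{k-1}$, so conditioning on $(V,W)$ is the same as conditioning on $\Xi_{k-1}$; the left-hand side is then $\cov(\Xi_{\> k}\mid \Xi_{k-1}) = \Pi_{\> k}$ by (\ref{Pitail}), while the right-hand side is $\alpha_k - \beta_k\gamma_k^{-1}\beta_k^{\rT}$ by (\ref{alphak})--(\ref{gammak}). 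This is (\ref{Pitailnext}) for each $k = 2, \ldots, N$, and the initial condition $\Pi_{\> 1} = \cP$ is (\ref{Pitail1}).

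The step requiring the most care is the justification of the iterated conditioning itself, together with the invertibility of $\gamma_k$. The former relies on the fact that for jointly Gaussian vectors the conditional distributions are again Gaussian with deterministic, observation-independent conditional covariances given by Schur complements, so that sequential conditioning composes correctly --- a classical fact from linear filtering theory (cf. \cite{AM_1979,LS_2001}). The invertibility of $\gamma_k$ follows from the positive definiteness of $\cP$, which is already needed for $V = \ln\det\cP$ in (\ref{V}) to be well defined and which propagates to every principal submatrix and hence to each conditional covariance matrix appearing above.
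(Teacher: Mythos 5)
Your proposal is correct and follows essentially the same route as the paper, which simply asserts that the recurrence follows from the representation via the classical Gaussian vectors $\xi_1,\ldots,\xi_N$ and its similarity to the Kalman-filter covariance update; you have merely written out the iterated-conditioning/Schur-complement step that the paper leaves implicit. The added remarks on the block ordering in (\ref{Piprec}) and the invertibility of $\gamma_k$ are sound and consistent with the paper's standing assumptions.
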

%%%%%%%%%%%%%%%%%%%%%%%%%%%%%%%%%%%%%%%%%%%%%%%%%%%%%%%%%%%%%%%%%%%%%%%%%%%%%%%%%%%%%%%%%%%%%%%%%%%%%%%%%%%
\begin{proof}
The relations (\ref{Pitailnext})--(\ref{Piprec}) follow directly from the representation of the matrices in terms of the classical Gaussian random vectors $\xi_1, \ldots, \xi_N$ of Section~\ref{sec:reccov} and are similar to the conditional covariance matrix update in the discrete-time Kalman filter \cite{AM_1979,LS_2001}.
\end{proof}
%%%%%%%%%%%%%%%%%%%%%%%%%%%%%%%%%%%%%%%%%%%%%%%%%%%%%%%%%%%%%%%%%%%%%%%%%%%%%%%%%%%%%%%%%%%%%%%%%%%%%%%%%%%

By a reasoning,  similar to (\ref{dlndet})--(\ref{dVdM}),
the partial Frechet derivative $\d_{E_k}V_{\>k}$  of the tail functional $V_{\> k}$ with respect to the matrix pair $E_k$ in (\ref{Ek}) can be computed in terms of the Schur complement $\Pi_{\>k}$ (see (\ref{dVdEk})--(\ref{Pitail})) as
\begin{equation}
\label{dVtaildEk}
    \d_{E_k}V_{\> k} = \d_{E_k} \Pi_{\>k}^{\dagger}(\Pi_{\>k}^{-1}).
\end{equation}
For computing the Frechet derivative $\d_{E_k} \Pi_{\>k}$ in the following lemma,    which is required for (\ref{dVtaildEk}), we will use (\ref{cPtail}) along with the partitioning
\begin{equation}
\label{cABtail}
    \cA
    =
    \begin{bmatrix}
        \cA_{k-1} & 0\\
        D_k & \cA_{\>k}
    \end{bmatrix},
    \qquad
    \cB
    =
    \begin{bmatrix}
        \cB_{k-1}\\
        \cB_{\>k}
    \end{bmatrix}
\end{equation}
whose blocks are recovered from (\ref{cAk}) and (\ref{cBk}). More precisely,
\begin{equation}
\label{cA>=k}
    \cA_{\>k}
    =
    \begin{bmatrix}
        A_k & 0 & 0 & \cdots& 0\\
        B_{k+1} C_k & A_{k+1} & 0 & \cdots & 0 \\
        \cdots &\cdots  & \cdots &\cdots & \vdots \\
        B_N C_k & B_N C_{k+1} & \cdots & B_N C_{N-1} & A_N
    \end{bmatrix}
\end{equation}
 is a block lower triangular matrix with the diagonal blocks $A_k, \ldots, A_N$, and
\begin{equation}
\label{Dk}
  D_k
  =
    \begin{bmatrix}
    B_kC_1 & \cdots & B_kC_{k-1}\\
    \vdots & \ddots & \vdots\\
    B_NC_1 & \cdots & B_NC_{k-1}
    \end{bmatrix}
    =
  \cB_{\> k}
  \cC_{k-1},
\end{equation}
where use is made of the matrices $\cC_{k-1}$ from (\ref{cCk}) and $\cB_{\>k}$ from (\ref{cB>k}). The following lemma, which is similar to Lemmas~\ref{lem:cP} and \ref{lem:dPdRM}, carries out an infinitesimal perturbation analysis of the Schur complements in (\ref{Pitail}).

%%%%%%%%%%%%%%%%%%%%%%%%%%%%%%%%%%%%%%%%%%%%%%%%%%%%%%%%%%%%%%%%%%%%%%%%%%%%%%%%%%%%%%%%%%%%%%%%%%%
\begin{lemma}
\label{lem:dPidE}
Suppose the matrices $A_1, \ldots, A_N$ in (\ref{Ak}) are Hurwitz. Then, for any $k=1, \ldots, N$,  the Frechet derivatives of the Schur complement $\Pi_{\> k}$ in (\ref{Pitail}) with respect to the energy and coupling matrices of the $k$th oscillator can be computed as
\begin{align}
\label{dPi>=kdRk}
    \d_{R_k}
    \Pi_{\>k}
    = &
    4\bL_{\cA_{\> k}}
    \bS
    \Big(
    \[[[
        \begin{bmatrix}
          \Theta_k \\
          0
        \end{bmatrix},
        \Pi_{k\bullet}
    \]]]
    -
    \[[[ I, \cP_{k-1}^{-1}\cB_{k-1}\wt{\cB}_{\>k}^{\rT}\]]]
    \bL_{\cA_{\>k},\cA_{k-1}}
    \[[[
        \begin{bmatrix}
          \Theta_k \\
          0
        \end{bmatrix},
        Q_k
    \]]]
    \Big),\\
\nonumber
    \d_{M_k}
    \Pi_{\>k}
    = &
    4\bL_{\cA_{\> k}}
    \bS
    \Big(
    \[[[
        \begin{bmatrix}
          \Theta_k & 0 \\
          0        & I
        \end{bmatrix},
        \Pi_{k\bullet}
    \]]]
        \begin{bmatrix}
          2\bA \[[[M_k^{\rT} J, I_{n_k}\]]] \\
          \[[[\cB_{>k} J, I_{n_k}\]]]
        \end{bmatrix}\\
\nonumber
    &+
    \Big(
    \[[[
        \begin{bmatrix}
          \Theta_k \\
          0
        \end{bmatrix},
         \wt{\cB}_{\>k}^{\rT}
    \]]]
    \bT - \[[[ I, \cP_{k-1}^{-1}\cB_{k-1}\wt{\cB}_{\>k}^{\rT}\]]]\\
\label{dPi>=kdMk}
    & \x
    \bL_{\cA_{\>k},\cA_{k-1}}
    \Big(
    \[[[
        \begin{bmatrix}
          \Theta_k & 0 \\
          0        & I
        \end{bmatrix},
        Q_k
    \]]]
        \begin{bmatrix}
          2\bA \[[[M_k^{\rT} J, I_{n_k}\]]] \\
          \[[[\cB_{>k} J, I_{n_k}\]]]
        \end{bmatrix}
        +
    \[[[
        \begin{bmatrix}
          \Theta_k \\
          0
        \end{bmatrix},
         \cC_{k-1}\cP_{k-1}
         +
         \cB_{k-1}^{\rT}
    \]]]
    \bT
    \Big)
    \Big)
    \Big).
\end{align}
Here, $\Pi_{k\bullet} \in \mR^{n_k\x (n_k + \ldots + n_N)}$ denotes the first block-row of the matrix $\Pi_{\> k}$,   and use is made of the operators (\ref{bL})--(\ref{bSbA}) together with the matrix
\begin{equation}
\label{cBtailtilde}
    \wt{\cB}_{\>k}
    :=
  \cB_{\>k} - T_k \cB_{k-1},
\end{equation}
where $T_k$ is given by (\ref{Tk}).
\hfill$\square$
\end{lemma}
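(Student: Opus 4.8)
The plan is to recognise the tail Schur complement $\Pi_{\>k}$ as the controllability Gramian of a reduced ``innovations'' realization, and then to repeat the variational argument of Lemma~\ref{lem:dPdRM} on that realization. First I would establish the tail analogue of the ALE (\ref{cPALE}), namely
\[
    \cA_{\>k}\Pi_{\>k} + \Pi_{\>k}\cA_{\>k}^{\rT} + \wt{\cB}_{\>k}\wt{\cB}_{\>k}^{\rT} = 0,
\]
so that $\Pi_{\>k} = \bL_{\cA_{\>k}}(\wt{\cB}_{\>k}\wt{\cB}_{\>k}^{\rT})$, with $\cA_{\>k}$ from (\ref{cA>=k}) and $\wt{\cB}_{\>k}$ from (\ref{cBtailtilde}). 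This follows by writing the three block equations of (\ref{cPALE}) in the partitioning (\ref{cABtail}) and (\ref{cPtail}), substituting $D_k = \cB_{\>k}\cC_{k-1}$ from (\ref{Dk}), eliminating $\cA_{k-1}^{\rT}\cP_{k-1}^{-1}$ by means of the $(1,1)$ block, and completing the square around $T_k = Q_{\>k}\cP_{k-1}^{-1}$ from (\ref{Tk}); the cross terms cancel and leave exactly $-\wt{\cB}_{\>k}\wt{\cB}_{\>k}^{\rT}$, in agreement with the conditional-covariance reading (\ref{Pitail}).

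Next I would record the auxiliary Sylvester equation governing the off-diagonal block $Q_{\>k}$, which is what makes $T_k$, and hence $\wt{\cB}_{\>k}$, respond to the parameters of the $k$th oscillator. The $(2,1)$ block of (\ref{cPALE}) gives $\cA_{\>k}Q_{\>k} + Q_{\>k}\cA_{k-1}^{\rT} + \cB_{\>k}(\cC_{k-1}\cP_{k-1}+\cB_{k-1}^{\rT}) = 0$, i.e. $Q_{\>k} = \bL_{\cA_{\>k},\cA_{k-1}}(\cB_{\>k}(\cC_{k-1}\cP_{k-1}+\cB_{k-1}^{\rT}))$. Since $\cA_{k-1},\cB_{k-1},\cC_{k-1},\cP_{k-1}$ lie upstream of the $k$th oscillator, they are frozen under $\d_{R_k}$ and $\d_{M_k}$, and only $\cA_{\>k}$ and $\cB_{\>k}$ carry the perturbation.

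The differentiation itself then follows (\ref{dPALE1})--(\ref{cPbL}) verbatim on the tail ALE, yielding $\delta\Pi_{\>k} = 2\bL_{\cA_{\>k}}\bS((\delta\cA_{\>k})\Pi_{\>k} + \wt{\cB}_{\>k}\,\delta\wt{\cB}_{\>k}^{\rT})$. It remains to insert the first variations. From (\ref{Ak})--(\ref{Ck}) and (\ref{cA>=k}), $\delta_{R_k}\cA_{\>k}$ touches only the $(1,1)$ block through $\delta A_k = 2\Theta_k\,\delta R_k$, whereas $\delta_{M_k}\cA_{\>k}$ touches both the $(1,1)$ block through $\delta A_k = 4\Theta_k\bA(M_k^{\rT}J\,\delta M_k)$ and the first below-diagonal block-column $\cB_{>k}C_k$ through $\delta C_k = 2J\,\delta M_k$. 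For $\wt{\cB}_{\>k} = \cB_{\>k} - T_k\cB_{k-1}$ I would use $\delta\wt{\cB}_{\>k} = \delta\cB_{\>k} - (\delta Q_{\>k})\cP_{k-1}^{-1}\cB_{k-1}$, obtaining $\delta Q_{\>k}$ by differentiating the Sylvester equation of the previous step,
\[
    \delta Q_{\>k} = \bL_{\cA_{\>k},\cA_{k-1}}\big((\delta\cA_{\>k})Q_{\>k} + (\delta\cB_{\>k})(\cC_{k-1}\cP_{k-1}+\cB_{k-1}^{\rT})\big),
\]
with $\delta_{R_k}\cB_{\>k} = 0$ and $\delta_{M_k}\cB_{\>k} = \begin{bmatrix}\delta B_k\\ 0\end{bmatrix}$, $\delta B_k = 2\Theta_k\,\delta M_k^{\rT}$. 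Extracting $\Pi_{k\bullet}$ and $Q_k$ as the first block-rows of $\Pi_{\>k}$ and $Q_{\>k}$ and rewriting every product through the sandwich notation (\ref{sand}) then collapses these expressions into (\ref{dPi>=kdRk}) and (\ref{dPi>=kdMk}); the transpose operator $\bT$ enters precisely because $\delta B_k$ and $\delta C_k$ are built from $\delta M_k^{\rT}$.

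The main obstacle I anticipate is the $M_k$ bookkeeping in the last step. In contrast to $R_k$, the matrix $M_k$ enters $\cA_{\>k}$ in two different blocks and enters $\wt{\cB}_{\>k}$ both directly (through $B_k$ inside $\cB_{\>k}$) and indirectly (through $T_k$), while at the same time perturbing the right-hand side $\cB_{\>k}(\cC_{k-1}\cP_{k-1}+\cB_{k-1}^{\rT})$ of the Sylvester equation; this last contribution is what produces the extra $\bT$-transposed term sitting inside $\bL_{\cA_{\>k},\cA_{k-1}}$ in (\ref{dPi>=kdMk}). Threading the two nested operators $\bL_{\cA_{\>k}}$ and $\bL_{\cA_{\>k},\cA_{k-1}}$, the block-partitioned sandwiches built on $\begin{bmatrix}\Theta_k\\ 0\end{bmatrix}$ and $\begin{bmatrix}\Theta_k & 0\\ 0 & I\end{bmatrix}$, and the self-adjoint projections $\bS$ and $\bA$ in the correct order is where the delicacy lies; one must also track that feeding $\delta Q_{\>k}$ through $T_k$ into $\wt{\cB}_{\>k}$ flips the sign, which is exactly what turns the naive $+$ into the $-$ in the second term of (\ref{dPi>=kdRk}). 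The $R_k$ formula is the clean special case obtained by setting $\delta\cB_{\>k}=0$.
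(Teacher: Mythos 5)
Your proposal is correct and follows essentially the same route as the paper: the tail ALE $\cA_{\>k}\Pi_{\>k}+\Pi_{\>k}\cA_{\>k}^{\rT}+\wt{\cB}_{\>k}\wt{\cB}_{\>k}^{\rT}=0$, the Sylvester equation for $Q_{\>k}$, spatial causality freezing $\cA_{k-1},\cB_{k-1},\cC_{k-1},\cP_{k-1}$, and the chain $\delta Q_{\>k}\to\delta T_k\to\delta\wt{\cB}_{\>k}$ feeding into the differentiated tail ALE are all exactly the paper's steps, with the correct variations $\delta A_k$, $\delta B_k$, $\delta C_k$ and the correct accounting for the sign and the transpose operator $\bT$. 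The only (immaterial) difference is that you obtain the tail ALE by block elimination and completion of the square around $T_k$, whereas the paper packages the same algebra as the congruence transformation (\ref{cPfact})--(\ref{PitailALE}).
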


%%%%%%%%%%%%%%%%%%%%%%%%%%%%%%%%%%%%%%%%%%%%%%%%%%%%%%%%%%%%%%%%%%%%%%%%%%%%%%%%%%%%%%%%%%%%%%%%%%%
\begin{proof}
Similarly to (\ref{QkALE}) %and (\ref{PkkALE})
of Lemma~\ref{lem:cP}, for any $k=2, \ldots, N$,  the block $Q_{\> k}$ %and $\cP_{\> k}$
of the matrix $\cP$ in (\ref{cPtail}) satisfies the ASE
\begin{align}
\nonumber
   \cA_{\>k} Q_{\>k}  & +Q_{\>k}\cA_{k-1}^{\rT}\\
\label{QkALEtail}
    & +  D_k\cP_{k-1}   + \cB_{\>k}\cB_{k-1}^{\rT} = 0,
\end{align}
where
the matrices $\cB_{\> k}$ and $D_k$ are given by (\ref{cB>k}) and (\ref{Dk}). Similarly to (\ref{dPALE1})
%and (\ref{dPALE2})
in the proof of Lemma~\ref{lem:dPdRM}, the first variation of  the equation (\ref{QkALEtail})
%and (\ref{PkkALEtail})
with respect to $E_k$ takes the form
\begin{align}
\nonumber
   \cA_{\>k} \delta_{E_k}Q_{\>k}  & +(\delta_{E_k}Q_{\>k})\cA_{k-1}^{\rT}\\
\label{varQkALEtail}
    & +  (\delta_{E_k}\cA_{\>k} )Q_{\>k}+(\delta_{E_k}D_k)\cP_{k-1}   + (\delta_{E_k}\cB_{\>k})\cB_{k-1}^{\rT} = 0.
\end{align}
Here, use is made of the spatial causality discussed above, whereby the  matrices $\cA_{k-1}$, $\cB_{k-1}$, $\cP_{k-1}$,  pertaining to the first $k-1$ oscillators in the cascade,  are independent of $E_k$, and hence, their variations vanish. Also, in view of (\ref{cA>=k}), the first variations of the $E_k$-dependent matrices $\cA_{\>k}$, $\cB_{\>k}$, $D_k$ in (\ref{varQkALEtail}) are computed as
\begin{align}
\label{varcAtail}
    \delta_{E_k}\cA_{\> k}
    & =
    \begin{bmatrix}
        \delta_{E_k}A_k & 0 \\
        \cB_{>k}\delta_{E_k}C_k & 0
    \end{bmatrix}
    =
    2
    \begin{bmatrix}
        \Theta_k (\delta R_k + 2\bA(M_k^{\rT} J\delta M_k))& 0 \\
        \cB_{>k} J \delta M_k& 0
    \end{bmatrix},\\
\label{varcBtail}
    \delta_{E_k}\cB_{\> k}
    & =
    \begin{bmatrix}
        \delta_{E_k}B_k\\
        0
    \end{bmatrix}
    =
    2
    \begin{bmatrix}
        \Theta_k \delta M_k^{\rT}\\
        0
    \end{bmatrix},\\
\label{varDk}
    \delta_{E_k}D_k
    & =
    \begin{bmatrix}
        (\delta_{E_k}B_k) \cC_{k-1}\\
        0
    \end{bmatrix}
    =
    2
    \begin{bmatrix}
        \Theta_k \delta M_k^{\rT} \cC_{k-1}\\
        0
    \end{bmatrix},
\end{align}
where the matrices $\cB_{>k}$ and $\cC_{k-1}$ are independent of $E_k$. In terms of the operators (\ref{bL}) and (\ref{sand}), the solution of the ASE (\ref{varQkALEtail}) leads to
\begin{align}
\nonumber
    \d_{E_k}
    Q_{\> k}
    = &
    \bL_{\cA_{\>k},\cA_{k-1}}
    \big(
        \[[[I, Q_{\>k}\]]]
        \d_{E_k}\cA_{\>k}\\
\label{dQtaildEk}
        & +
        \[[[I,\cP_{k-1}\]]]\d_{E_k}D_k
        +
        \[[[I,\cB_{k-1}^{\rT}\]]]\d_{E_k}\cB_{\>k}
    \big).
\end{align}
Here, in view of (\ref{varcAtail})--(\ref{varDk}), the components of the Frechet derivatives $\d_{E_k}\cA_{\>k}$,  $\d_{E_k}\cB_{\>k}$ and $\d_{E_k}D_k$ are given by
\begin{align}
\label{dcAtaildRk}
    \d_{R_k}\cA_{\> k}
    & =
    2
    \[[[
        \begin{bmatrix}
          \Theta_k \\
          0
        \end{bmatrix},
        \begin{bmatrix}
          I_{n_k} & 0
        \end{bmatrix}
    \]]],\\
\label{dcBtaildRk}
    \d_{R_k}\cB_{\> k}
    & = 0,\\
\label{dDkdRk}
    \d_{R_k}D_k
    & =
    0,\\
\label{dcAtaildMk}
    \d_{M_k}
    \cA_{\> k}
    & =
    2
    \[[[
        \begin{bmatrix}
          \Theta_k & 0 \\
          0        & I
        \end{bmatrix},
        \begin{bmatrix}
          I_{n_k} & 0
        \end{bmatrix}
    \]]]
        \begin{bmatrix}
          2\bA \[[[M_k^{\rT} J, I_{n_k}\]]] \\
          \[[[\cB_{>k} J, I_{n_k}\]]]
        \end{bmatrix},\\
\label{dcBtaildMk}
    \d_{M_k}
    \cB_{\> k}
    & =
    2
    \[[[
        \begin{bmatrix}
          \Theta_k \\
          0
        \end{bmatrix},
          I_m
    \]]]
    \bT,\\
\label{dDkdMk}
    \d_{M_k}D_k
    & =
    2
    \[[[
        \begin{bmatrix}
          \Theta_k \\
          0
        \end{bmatrix},
         \cC_{k-1}
    \]]]
    \bT.
\end{align}
Substitution of (\ref{dcAtaildRk})--(\ref{dDkdMk}) into (\ref{dQtaildEk}) leads to
\begin{align}
\label{dQtaildRk}
    \d_{R_k}
    Q_{\> k}
    = &
    2
    \bL_{\cA_{\>k},\cA_{k-1}}
    \[[[
        \begin{bmatrix}
          \Theta_k \\
          0
        \end{bmatrix},
        Q_k
    \]]],\\
\nonumber
    \d_{M_k}
    Q_{\> k}
    = &
    2
    \bL_{\cA_{\>k},\cA_{k-1}}
    \Big(
    \[[[
        \begin{bmatrix}
          \Theta_k & 0 \\
          0        & I
        \end{bmatrix},
        Q_k
    \]]]
        \begin{bmatrix}
          2\bA \[[[M_k^{\rT} J, I_{n_k}\]]] \\
          \[[[\cB_{>k} J, I_{n_k}\]]]
        \end{bmatrix}\\
\label{dQtaildMk}
        & +
    \[[[
        \begin{bmatrix}
          \Theta_k \\
          0
        \end{bmatrix},
         \cC_{k-1}\cP_{k-1}
         +
         \cB_{k-1}^{\rT}
    \]]]
    \bT
    \Big),
\end{align}
where use is made of the relation $ \begin{bmatrix} I_{n_k} & 0 \end{bmatrix}Q_{\> k} = Q_k$ which follows from (\ref{Qk}) and (\ref{Q>=k}). Since $\cP_{k-1}$ does not depend on $E_k$, the corresponding derivatives of the matrix $T_k$ from (\ref{Tk}) are expressed in terms of (\ref{dQtaildRk}) and (\ref{dQtaildMk}) as
\begin{equation}
\label{dTkdEk}
    \d_{E_k}T_k
    =
    \[[[I, \cP_{k-1}^{-1}\]]]
    \d_{E_k}Q_{\> k}.
\end{equation}
Now, in view of the partitioning (\ref{cPtail}), the matrix $\cP$ can be factorized in terms of $T_k$ and  the matrix $\Pi_{\>k}$ from (\ref{Pitail}) as
\begin{equation}
\label{cPfact}
    \cP
    =
    \begin{bmatrix}
      I & 0 \\
      T_k & I
    \end{bmatrix}
    \begin{bmatrix}
      \cP_{k-1} & 0 \\
      0 & \Pi_{\> k}
    \end{bmatrix}
    \begin{bmatrix}
      I & T_k^{\rT} \\
      0 & I
    \end{bmatrix}
\end{equation}
for any $k=1,\ldots, N$. This factorization is closely related to the decomposition
$$
    \Xi_{\>k} = T_k \Xi_{k-1} + \Xi_{\>k} - \wh{\Xi}_{\> k},
$$
where, in view of (\ref{Xihat}), the random vectors $\Xi_{k-1}$ and $\Xi_{\>k} - \wh{\Xi}_{\> k}$  are uncorrelated and have covariance matrices $\cP_{k-1}$ and $\Pi_{\>k}$, respectively. Substitution of (\ref{cPfact}) into the ALE (\ref{cPALE}) allows the latter to be represented in the form
\begin{equation}
\label{cPALET}
    \wt{\cA}_k
    \begin{bmatrix}
      \cP_{k-1} & 0 \\
      0 & \Pi_{\> k}
    \end{bmatrix}
    +
        \begin{bmatrix}
      \cP_{k-1} & 0 \\
      0 & \Pi_{\> k}
    \end{bmatrix}
    \wt{\cA}_k^{\rT} + \wt{\cB}_k \wt{\cB}_k^{\rT} = 0,
\end{equation}
where
\begin{align}
\label{cAktilde}
    \wt{\cA}_k
    & :=
    \begin{bmatrix}
      I & 0 \\
      -T_k & I
    \end{bmatrix}
    \cA
    \begin{bmatrix}
      I & 0 \\
      T_k & I
    \end{bmatrix}
    =
    \begin{bmatrix}
        \cA_{k-1} & 0\\
        \cA_{\>k}T_k - T_k \cA_{k-1} + D_k & \cA_{\>k}
    \end{bmatrix},\\
\label{cBktilde}
    \wt{\cB}_k
    & :=
    \begin{bmatrix}
      I & 0 \\
      -T_k & I
    \end{bmatrix}
    \cB
    =
    \begin{bmatrix}
        \cB_{k-1}\\
        \wt{\cB}_{\>k}
    \end{bmatrix},
\end{align}
with the matrix $\wt{\cB}_{\>k}$ given by (\ref{cBtailtilde}),
and use is made of (\ref{cABtail}). By substituting (\ref{cAktilde}) and (\ref{cBktilde}) into (\ref{cPALET}), it follows that the second diagonal block of this ALE takes the form
\begin{equation}
\label{PitailALE}
    \cA_{\> k} \Pi_{\>k} + \Pi_{\> k} \cA_{\> k}^{\rT} + \wt{\cB}_{\>k}\wt{\cB}_{\>k}^{\rT} = 0.
\end{equation}
By a reasoning, similar to (\ref{dQtaildEk}), the Frechet differentiation of both parts of (\ref{PitailALE}) with respect to $E_k$ leads to
\begin{equation}
\label{dPitaildEk}
    \d_{E_k}
    \Pi_{\>k}
    =
    2\bL_{\cA_{\> k}}
    \bS
    \big(
    \[[[I, \Pi_{\> k}\]]]
    \d_{E_k}\cA_{\> k}
    +
    \[[[I, \wt{\cB}_{\>k}^{\rT}\]]]
    \d_{E_k}\wt{\cB}_{\>k}
    \big).
\end{equation}
Here,
\begin{align}
\nonumber
    \d_{E_k}
    \wt{\cB}_{\>k}
    & =
    \d_{E_k}
    \cB_{\>k} - \[[[ I, \cB_{k-1}\]]]\d_{E_k}T_k\\
\label{dcBtaildEk}
    & =
    \d_{E_k}
    \cB_{\>k} - \[[[ I, \cP_{k-1}^{-1}\cB_{k-1}\]]]
    \d_{E_k}Q_{\> k}
\end{align}
in view of (\ref{dTkdEk}), (\ref{cBtailtilde}) and the fact that the matrix $\cB_{k-1}$ does not depend on $E_k$. Substitution of (\ref{dcAtaildRk})--(\ref{dQtaildMk}) into (\ref{dPitaildEk}) and (\ref{dcBtaildEk}) leads to
\begin{align*}
    \d_{R_k}
    \Pi_{\>k}
    = &
    2\bL_{\cA_{\> k}}
    \bS
    \big(
    \[[[I, \Pi_{\> k}\]]]
    \d_{R_k}\cA_{\> k}
    +
    \[[[I, \wt{\cB}_{\>k}^{\rT}\]]]
    \d_{R_k}\wt{\cB}_{\>k}
    \big)\\
    = &
    4\bL_{\cA_{\> k}}
    \bS
    \Big(
    \[[[
        \begin{bmatrix}
          \Theta_k \\
          0
        \end{bmatrix},
        \Pi_{k\bullet}
    \]]]
    -
    \[[[ I, \cP_{k-1}^{-1}\cB_{k-1}\wt{\cB}_{\>k}^{\rT}\]]]
    \bL_{\cA_{\>k},\cA_{k-1}}
    \[[[
        \begin{bmatrix}
          \Theta_k \\
          0
        \end{bmatrix},
        Q_k
    \]]]
    \Big),\\
    \d_{M_k}
    \Pi_{\>k}
    = &
    2\bL_{\cA_{\> k}}
    \bS
    \big(
    \[[[I, \Pi_{\> k}\]]]
    \d_{M_k}\cA_{\> k}
    +
    \[[[I, \wt{\cB}_{\>k}^{\rT}\]]]
    \d_{M_k}\wt{\cB}_{\>k}
    \big)\\
    = &
    4\bL_{\cA_{\> k}}
    \bS
    \Big(
    \[[[
        \begin{bmatrix}
          \Theta_k & 0 \\
          0        & I
        \end{bmatrix},
        \Pi_{k\bullet}
    \]]]
        \begin{bmatrix}
          2\bA \[[[M_k^{\rT} J, I_{n_k}\]]] \\
          \[[[\cB_{>k} J, I_{n_k}\]]]
        \end{bmatrix}\\
    &+
    \Big(
    \[[[
        \begin{bmatrix}
          \Theta_k \\
          0
        \end{bmatrix},
         \wt{\cB}_{\>k}^{\rT}
    \]]]
    \bT - \[[[ I, \cP_{k-1}^{-1}\cB_{k-1}\wt{\cB}_{\>k}^{\rT}\]]]\\
    & \x
    \bL_{\cA_{\>k},\cA_{k-1}}
    \Big(
    \[[[
        \begin{bmatrix}
          \Theta_k & 0 \\
          0        & I
        \end{bmatrix},
        Q_k
    \]]]
        \begin{bmatrix}
          2\bA \[[[M_k^{\rT} J, I_{n_k}\]]] \\
          \[[[\cB_{>k} J, I_{n_k}\]]]
        \end{bmatrix}
        +
    \[[[
        \begin{bmatrix}
          \Theta_k \\
          0
        \end{bmatrix},
         \cC_{k-1}\cP_{k-1}
         +
         \cB_{k-1}^{\rT}
    \]]]
    \bT
    \Big)
    \Big)
    \Big),
\end{align*}
which establishes (\ref{dPi>=kdRk}) and (\ref{dPi>=kdMk}) in view
of the representation $\Pi_{k\bullet}= \begin{bmatrix}I_{n_k} & 0\end{bmatrix} \Pi_{\> k}$ for the first block-row of the matrix $\Pi_{\> k}$.
\end{proof}
%%%%%%%%%%%%%%%%%%%%%%%%%%%%%%%%%%%%%%%%%%%%%%%%%%%%%%%%%%%%%%%%%%%%%%%%%%%%%%%%%%%%%%%%%%%%%%%%%%%

Application of Lemma~\ref{lem:dPidE} to (\ref{dVtaildEk}) is carried out similarly to the proof of Theorem~\ref{th:dVdRM} by using the relations
\begin{equation}
\label{dV>=kdEk}
    \rho_k
    =
    \d_{R_k} \Pi_{\>k}^{\dagger}(\Pi_{\>k}^{-1}),
    \qquad
    \mu_k
    =
    \d_{M_k} \Pi_{\>k}^{\dagger}(\Pi_{\>k}^{-1}).
\end{equation}
Here, the adjoint operators can be found from (\ref{dPi>=kdRk}) and (\ref{dPi>=kdMk}) (the resulting expressions are cumbersome and are omitted). Their evaluation at $\Pi_{\>k}^{-1}$ in (\ref{dV>=kdEk}) involves the observability Gramian $\cQ_{\>k}$ of the pair $(\cA_{\>k}, \Pi_{\>k}^{-1/2})$ satisfying the ALE
\begin{equation}
\label{cQ>=kALE}
    \cA_{\>k}^{\rT} \cQ_{\>k} + \cQ_{\>k}\cA_{\>k} + \Pi_{\>k}^{-1} = 0,
\end{equation}
which reduces to (\ref{cQALE}) in the case $k=1$. Note that the dimension of (\ref{cQ>=kALE}) decreases with $k$ and becomes relatively small  at the end of the cascade $k=N$.

%%%%%%%%%%%%%%%%%%%%%%%%%%%%%%%%%%%%%%%%%%%%%%%%%%%%%%%%%%%%%%%%%%%%%%%%%%%%%%%%%%%%%%%%%%%%%%%%%%%%%%%%
\section{Minimization of the mean square sensitivity index}\label{sec:opt}
%%%%%%%%%%%%%%%%%%%%%%%%%%%%%%%%%%%%%%%%%%%%%%%%%%%%%%%%%%%%%%%%%%%%%%%%%%%%%%%%%%%%%%%%%%%%%%%%%%%%%%%%

In what follows, we will use the vectorization of matrices \cite{M_1988,SIG_1998} in regard to the matrix-valued parameters of the cascaded oscillators and the corresponding Frechet derivatives.  The full (rather than half-) vectorization is denoted by $\vec{(\cdot)}$ or $\vect(\cdot)$ interchangeably and is applicable to symmetric matrices and assemblages of matrices. Accordingly, the $N$-tuple  $E$ of the pairs $E_k$ in (\ref{Ek}) and (\ref{E}) is vectorized as
\begin{equation}
\label{EEvec}
  \vec{E}
  :=
  \begin{bmatrix}
    \vec{E}_1\\
    \vdots\\
    \vec{E}_N
  \end{bmatrix},
  \qquad
  \vec{E}_k
  :=
  \begin{bmatrix}
    \vec{R}_k\\
    \vec{M}_k
  \end{bmatrix},
\end{equation}
where $\vec{R}_k \in \mR^{n_k^2}$, so that $\vec{E}_k\in \mR^{n_k(n_k+m)}$. In terms of the vectorization,  the linear operator in (\ref{bL}) is represented as
\begin{align}
\label{bLvec}
    \vect(\bL_{\alpha,\beta}(\gamma))
    & = -(\beta \op \alpha)^{-1}\vec{\gamma},%\\
\end{align}
where $\alpha\op \beta:= \alpha \ox I + I\ox \beta$ is the Kronecker sum of matrices. 
Furthermore, we denote by $\Ups_r$ the duplication $r^2 \x \frac{1}{2}r(r+1)$-matrix \cite{M_1988,SIG_1998} which relates the full vectorization $\vec{M}$ of a symmetric matrix $M$ of order $r$ to its half-vectorization $\vech(M)$ (that is,
the column-wise vectorization of its triangular part below and
including the main diagonal) by
\begin{equation}
\label{Ups}
    \vec{M} = \Ups_r \vech(M),
    \qquad
    M \in \mS_r.
\end{equation}
For example,
$$
    \Ups_2
    =
    \begin{bmatrix}
     1 &    0  &   0\\
     0 &    1  &   0\\
     0 &    1  &   0\\
     0 &    0  &   1
     \end{bmatrix}.
$$
The presence of linear degeneracies in the full vectorization $\vec{E}_k$ in (\ref{EEvec}) (coming from the symmetry of the energy matrices $R_k \in \mS_{n_k}$) can be taken into account by considering a smaller number of independent variables as
\begin{equation}
\label{Evech}
  \vec{E}_k
  =
  \mho_k
  e_k,
  \qquad
  \mho_k
  :=
  \begin{bmatrix}
    \Ups_{n_k} & 0\\
    0 & I_{mn_k}
  \end{bmatrix},
    \qquad
  e_k
  :=
  \begin{bmatrix}
    \vech(R_k)\\
    \vec{M}_k
  \end{bmatrix}.
\end{equation}
Here, the matrix $\mho_k$ is of full column rank, and,  in contrast to $\vec{E}_k$,  the entries of the vector $e_k\in \mR^{n_k(\frac{1}{2}(n_k+1) + m)}$ are linearly independent.

Now, suppose the energy and coupling matrices of the component oscillators are subject to infinitesimal perturbations (or implementation errors) $\delta R_k$ and $\delta M_k$, which, in accordance with (\ref{EEvec}) and (\ref{Evech}),  are represented in the vectorized form as $\delta \vec{E}_k$ or $\delta e_k$.  The corresponding (linearized)  variation of the functional $V$ is
\begin{align}
\nonumber
    \delta V
    & =
    \sum_{k=1}^N
    \bra
        \d_{E_k} V,
        \delta E_k
    \ket\\
\nonumber
      & = \sum_{k=1}^N \d_{\vec{E}_k} V^{\rT}\delta \vec{E}_k\\
\label{dV}
      & =
      \sum_{k=1}^N
      \d_{\vec{E}_k} V^{\rT}\mho_k \delta e_k,
\end{align}
where
\begin{equation}
\label{dVdek}
    \d_{\vec{E}_k} V
    =
    \begin{bmatrix}
        \vec{\rho}_k\\
        \vec{\mu}_k
    \end{bmatrix}
    =
    \mho_k
    d_k,
    \qquad
    d_k
    :=
    \begin{bmatrix}
        \vech(\rho_k)\\
        \vec{\mu}_k
    \end{bmatrix}        ,
\end{equation}
and use is made of the vectorizations of the partial Frechet derivatives $\rho_k$ and $\mu_k$ from (\ref{rhokmuk}).
If the modelling errors $\delta e_1, \ldots, \delta e_N$ (which encode the vectors $\delta \vec{E}_1, \ldots,\delta\vec{E}_N$) are of a classical random nature and are statistically uncorrelated with each other for different subsystems, they can be regarded as zero mean random vectors with covariance matrices
\begin{equation}
\label{EEcov}
    \cov (\delta e_j, \delta e_k)
    =
    \left\{
    \begin{matrix}
     \eps \Sigma_k & {\rm if}\ j=k\\
     0 & {\rm otherwise}
    \end{matrix}
    \right.
\end{equation}
for all $j,k=1, \ldots, N$. Here,
$\eps>0$ is a small scaling parameter, and   $\Sigma_1, \ldots, \Sigma_N$ are appropriately dimensioned positive definite matrices (which specify the shape of the scattering ellipsoids). Note that (\ref{EEcov}) completely specifies the covariance operators $\bE(\delta E_j \ox \delta E_k)$ for the random matrices $\delta E_1, \ldots, \delta E_N$,  whereby the variance of the quantity $\delta V$ in (\ref{dV}) takes the form
\begin{align}
\nonumber
    \bE ((\delta V)^2)
    & =
    \sum_{j,k=1}^N
    \Bra
        \bE(\delta E_j \ox \delta E_k),\,
        \d_{E_j} V
        \ox
        \d_{E_k} V
    \Ket\\
\nonumber
    & =
    \sum_{j,k=1}^N
    \d_{\vec{E}_j} V^{\rT}
    \mho_j
    \cov (\delta e_j, \delta e_k)
    \mho_k^{\rT}
     \d_{\vec{E}_k} V\\
\nonumber
    & =
    \eps
    \sum_{k=1}^N
    \d_{\vec{E}_k} V^{\rT}
    \mho_k
    \Sigma_k
    \mho_k^{\rT}
    \d_{\vec{E}_k} V\\
\label{EdV2}
     & =
    \eps Z
\end{align}
and is proportional (with a constant coefficient $\eps$) to the functional
\begin{equation}
\label{Z}
  Z := \sum_{k=1}^N Z_k,
  \qquad
  Z_k:= \|\mho_k^{\rT}\d_{\vec{E}_k} V_{\> k}\|_{\Sigma_k}^2.
\end{equation}
Here, use is also made of the relations (\ref{dVdEk}) and (\ref{Vtail}).
Note that $Z$ in (\ref{Z}) is a quadratic function of the gradient $\d_{\vec{E}}V$  and, in view of (\ref{EdV2}), can be regarded as a mean square measure for the sensitivity of the purity functional to the modelling errors in the energy and coupling matrices.

As mentioned in Section~\ref{sec:diff}, the sensitivity index $Z$ is not invariant under the symplectic similarity transformations (\ref{SXk}) of the component oscillators. Moreover, by applying (\ref{rhokmuknew}) of Theorem~\ref{th:dVdRMnew}, it follows that the gradient vectors in (\ref{dVdek}) are transformed as
\begin{align}
\nonumber
    \d_{\vec{E}_k} V
    & \mapsto
    \begin{bmatrix}
        \vect(S_k \rho_kS_k^{\rT})\\
        \vect(\mu_kS_k^{\rT})
    \end{bmatrix}\\
\nonumber
    & =
    \begin{bmatrix}
        S_k\ox S_k & 0\\
        0 & S_k\ox I_{mn_k}
    \end{bmatrix}
    \begin{bmatrix}
        \vec{\rho}_k\\
        \vec{\mu}_k
    \end{bmatrix}    \\
\label{dVdeknew}
    & =
    \begin{bmatrix}
        S_k\ox S_k & 0\\
        0 & S_k\ox I_{mn_k}
    \end{bmatrix}
    \mho_k
    d_k,
\end{align}
where the  partial Frechet derivatives $\rho_1, \ldots, \rho_N$ and $\mu_1, \ldots, \mu_N$ are evaluated at the original realization of the system as described in Theorem~\ref{th:dVdRM} and Section~\ref{sec:recder}.
Substitution of (\ref{dVdeknew}) into (\ref{Z})  leads to the following dependence of the quantities $Z_k$ in (\ref{Z}) on the transformation matrices $S_k \in \Sp(\Theta_k)$:
\begin{equation}
\label{ZSk}
      Z_k
      =
      \left\|
        \mho_k^{\rT}
        \begin{bmatrix}
            S_k\ox S_k & 0\\
            0 & S_k\ox I_{mn_k}
        \end{bmatrix}
        \mho_k
        d_k
      \right\|_{\Sigma_k}^2
      =:
      \Phi_k(S_k),
      \qquad
      k=1,\ldots, N,
\end{equation}
 Each of the functions $\Phi_k$ in (\ref{ZSk}) is a quartic polynomial of the entries of $S_k$, with the coefficients of this polynomial depending on the vector $d_k$ and the  matrix $\Sigma_k$.

In the context of robust Gaussian state generation, the above discussion suggests that the symplectic transformation matrices $S_1, \ldots, S_N$ (for a given cascade realization) can be found so as to minimize   the mean square sensitivity index $Z$ in (\ref{Z}). Since $Z$ has an additive structure, its minimization can be split into $N$ independent  problems of minimizing the functions $\Phi_1,\ldots, \Phi_N$  in (\ref{ZSk}) over the corresponding symplectic groups:
\begin{equation}
\label{Zmin}
  \inf_{S_k\in \Sp(\Theta_k),\ k=1,\ldots, N}\
  Z
  =
  \sum_{k=1}^N\
  \inf_{S_k\in \Sp(\Theta_k)}
  \Phi_k(S_k).
\end{equation}
The (optimal or suboptimal)  solutions $S_1, \ldots, S_N$ of
these problems can be used for balancing the original realization of the cascaded oscillators. The resulting equivalent realization of the composite quantum system will have a decreased mean square sensitivity of the purity functional with respect to  small random perturbations. This criterion differs from the previously known balanced realizations of classical linear systems  and their quantum analogues \cite{N_2013} which equate the controllability and observability Gramians of the system (in the spirit of the Kalman duality principle). Another related (entropy theoretic) optimality criterion  will be discussed in Appendix~\ref{sec:relent}.

We will now assume that there is prior information on parametric uncertainties which ensures the following upper bounds on the covariance matrices $\Sigma_k$ in (\ref{EEcov}):
\begin{equation}
\label{Sigmakup}
    \Sigma_k
    \preccurlyeq
    \begin{bmatrix}
        a_k I_{n_k(n_k+1)/2} & 0 \\
        0 & b_k I_{mn_k}
    \end{bmatrix},
\end{equation}
where $a_k$ and $b_k$ are positive scalars for all $k=1,\ldots, N$. In particular, (\ref{Sigmakup}) holds if the uncertainties $\delta R_k$ and $\delta M_k$ in the energy and coupling matrices are uncorrelated and satisfy
\begin{equation}
\label{covup}
    \cov(\vech(\delta R_k)) \preccurlyeq \eps a_k I_{n_k(n_k+1)/2},
    \qquad
    \cov(\delta \vec{M}_k) \preccurlyeq \eps b_k I_{mn_k},
\end{equation}
with $\eps$ the small scale factor as before.
Since the operator norm of the duplication matrix $\Ups_r$ in (\ref{Ups}) does not exceed $\sqrt{2}$ for any $r$, then  (\ref{Sigmakup}) implies that
\begin{equation}
\label{Sigmamhokup}
    \mho_k\Sigma_k\mho_k^{\rT}
    \preccurlyeq
    \begin{bmatrix}
        a_k \Ups_{n_k}\Ups_{n_k}^{\rT} & 0 \\
        0 & b_k I_{mn_k}
    \end{bmatrix}
    \preccurlyeq
    \begin{bmatrix}
        2a_k I_{n_k^2} & 0 \\
        0 & b_k I_{mn_k}
    \end{bmatrix},
\end{equation}
where the matrix $\mho_k$ is given by (\ref{Evech}). The last inequality in (\ref{Sigmamhokup}) leads to the following upper bound on the function $\Phi_k$ in (\ref{ZSk}):
\begin{align}
\nonumber
    \Phi_k(S_k)
    & =
    \left\|
        \begin{bmatrix}
        \vect(S_k \rho_kS_k^{\rT})\\
        \vect(\mu_kS_k^{\rT})
    \end{bmatrix}
    \right\|_{\mho_k\Sigma_k \mho_k^{\rT}}^2\\
\nonumber
    & \<
    2a_k |\vect(S_k \rho_kS_k^{\rT})|^2 + b_k |\vect(\mu_kS_k^{\rT})|^2\\
\label{Psikup}
    & \<
    2a_k \|S_k \rho_kS_k^{\rT}\|^2 + b_k\|\mu_kS_k^{\rT}\|^2
     =:
    \Psi_k(S_k),
\end{align}
where we have used the isometric property $|\vec{M}| = \|M\|$  of the full vectorization for any matrix $M$. Therefore, a suboptimal solution to the problem of minimizing the function $\Phi_k$ in (\ref{Zmin}) can be obtained by replacing it with the right-hand side of (\ref{Psikup}):
\begin{equation}
\label{Psikupmin}
    \Psi_k(S_k)
    \longrightarrow \min,
    \qquad
    S_k \in \Sp(\Theta_k).
\end{equation}
This suboptimal approach can be applied to all the oscillators in the cascade by solving (\ref{Psikupmin}) independently for every $k=1, \ldots, N$. In the case $n_k=2$, when $O_k$ is a one-mode oscillator and the CCR matrix $\Theta_k$ in (\ref{XXk}) coincides with the matrix $\bJ$ from (\ref{bJ}) up to a scalar factor, the replacement problem reduces effectively to a quadratic optimization problem and
its solution is provided below.

%%%%%%%%%%%%%%%%%%%%%%%%%%%%%%%%%%%%%%%%%%%%%%%%%%%%%%%%%%%%%%%%%%%%%%%%%%%%%%%%%%%%%%%%%%%%%%%%%%%%
\begin{theorem}
\label{th:Psimin}
Suppose $\rho\in \mS_2$ and $\mu\in\mR^{m\x 2}$ are given matrices, the second of which is of full column rank:
\begin{equation}
\label{taupos}
        \tau:= \mu^{\rT}\mu \succ 0 .
\end{equation}
Then any locally optimal solution of the minimization problem
\begin{equation}
\label{Psimin}
  \Psi(\sigma)
  :=
  \frac{1}{2}
  \|\sigma\rho\sigma^{\rT}\|^2
  +
  \|\mu \sigma^{\rT}\|^2
  \longrightarrow \min,
  \qquad
  \sigma \in \Sp(\bJ),
\end{equation}
can be represented as
\begin{equation}
\label{sigopt}
  \sigma = \sR(\phi) \sqrt{U}.
\end{equation}
Here,
\begin{equation}
\label{rot}
    \sR(\phi):=
    \begin{bmatrix}
        \cos \phi & -\sin \phi\\
        \sin \phi & \cos\phi
    \end{bmatrix}
\end{equation}
is the matrix of rotation by an arbitrary angle $\phi$, and $U\in \mS_2$ is a positive definite matrix which is computed as
\begin{equation}
\label{U}
  U
  :=
  \tau^{-1/2}
  f_{\lambda}(\tau^{-1/2} \rho \tau^{-1/2})
  \tau^{-1/2},
\end{equation}
where the function
\begin{equation}
\label{flam}
  f_{\lambda}(z)
  :=
  \frac{\lambda}{1 + \sqrt{1 + 2\lambda z^2}}
\end{equation}
of a scalar variable $z$ is applied to the real symmetric matrix $\tau^{-1/2} \rho \tau^{-1/2}$ and depends on a parameter $\lambda>0$. The latter is found as a unique solution of the equation
\begin{equation}
\label{lamsol}
  h(\lambda):= f_{\lambda}(r_1)f_{\lambda}(r_2) = \det\tau,
\end{equation}
where $r_1$ and $r_2$ are the eigenvalues of $\rho \tau^{-1}$.
\hfill$\square$
\end{theorem}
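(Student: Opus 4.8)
The plan is to exploit a polar-type factorization of the symplectic group $\Sp(\bJ)$ in dimension two, which here coincides with $\{\sigma\in\mR^{2\x 2}:\det\sigma = 1\}$. First I would write any $\sigma\in\Sp(\bJ)$ in its polar form $\sigma = \sR(\phi)\sqrt{U}$, where $\sqrt U$ is the positive definite square root of $U:=\sigma^{\rT}\sigma$ and $\sR(\phi)$ in (\ref{rot}) is the orthogonal polar factor. Since $\det\sigma=1$ forces $\det U=1$, this polar factor is a genuine rotation; conversely, a direct $2\x 2$ computation shows that a positive definite $U\in\mS_2$ has a symplectic square root (that is, $\sqrt U\,\bJ\,\sqrt U=\bJ$) precisely when $\det U=1$. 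This already produces the structural form (\ref{sigopt}), so the substance of the theorem is to identify the matrix $U$.

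Next I would show that the cost $\Psi$ in (\ref{Psimin}) depends on $\sigma$ only through $U$, which accounts for the freedom in $\phi$. Using invariance of the Frobenius norm under the orthogonal conjugation by $\sR(\phi)$ together with cyclicity of the trace and $\sigma^{\rT}\sigma=U$, one obtains $\|\sigma\rho\sigma^{\rT}\|^2=\Tr((\rho U)^2)$ and $\|\mu\sigma^{\rT}\|^2=\Tr(\tau U)$ with $\tau$ from (\ref{taupos}), whence $\Psi(\sigma)=\frac12\Tr((\rho U)^2)+\Tr(\tau U)=:\widetilde\Psi(U)$. The problem thus reduces to minimizing $\widetilde\Psi$ over the smooth manifold $\{U\in\mS_2:U\succ 0,\ \det U=1\}$. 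I would then apply the congruence $W:=\tau^{1/2}U\tau^{1/2}$, which is positive definite with $\det W=\det\tau$ and turns the cost into $\widetilde\Psi=\frac12\Tr((\widehat\rho W)^2)+\Tr W$, where $\widehat\rho:=\tau^{-1/2}\rho\tau^{-1/2}$ is similar to $\rho\tau^{-1}$ and hence has eigenvalues $r_1,r_2$. Since $\Tr W\to+\infty$ as any eigenvalue of $W$ tends to $0$ at fixed $\det W$, the cost is coercive on the constraint surface, so a minimizer exists and every local optimum satisfies the Lagrange conditions on this manifold.

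For the stationarity analysis I would use the Lagrangian for the constraint $\ln\det U=0$ together with the identity (\ref{dlndet}), obtaining $\rho U\rho+\tau=\nu U^{-1}$, i.e.\ $\widehat\rho W\widehat\rho+I=\nu W^{-1}$ for a multiplier $\nu$. Diagonalizing $\widehat\rho=ODO^{\rT}$ with $D=\diag(z_1,z_2)$ and writing $\widetilde W=O^{\rT}WO$ with diagonal entries $p,s$ and off-diagonal entry $q$, the off-diagonal Lagrange equation reads $q\,(z_1z_2+\nu/\det\tau)=0$; combined with the two diagonal equations (which yield $z_1(z_1p+z_2s)=z_2(z_1p+z_2s)=-1$ in the remaining case) it forces either $q=0$ or $z_1=z_2$. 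When $q=0$ the diagonal equations decouple into the scalar quadratics $z_i^2W_i^2+W_i-\nu=0$; taking the positive root and rationalizing $f_\lambda$ in (\ref{flam}) to $f_\lambda(z)=(\sqrt{1+2\lambda z^2}-1)/(2z^2)$, one identifies $W_i=f_\lambda(z_i)$ with $\lambda=2\nu>0$, so that $W=f_\lambda(\widehat\rho)$ and $U=\tau^{-1/2}f_\lambda(\widehat\rho)\tau^{-1/2}$ as in (\ref{U}).

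The one place needing care is the degenerate spectrum $z_1=z_2=z$, where $\widehat\rho=zI$: then the objective depends only on $\Tr W$, namely $\frac12 z^2((\Tr W)^2-2\det\tau)+\Tr W$, which is strictly increasing in $\Tr W\>2\sqrt{\det\tau}$, so the minimizer is again the scalar $W$ and the spurious $q\ne 0$ stationary points are not local minima; thus all local optima share the eigenbasis of $\widehat\rho$. Finally, imposing $\det W=\det\tau$ gives $f_\lambda(r_1)f_\lambda(r_2)=\det\tau$, which is exactly (\ref{lamsol}); since $f_\lambda(z)$ is strictly increasing in $\lambda>0$ for each fixed $z$, with $h(0^{+})=0$ and $h(+\infty)=+\infty$, the function $h$ in (\ref{lamsol}) is a bijection of $(0,+\infty)$ and $\lambda$ is uniquely determined. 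I expect the main obstacle to be precisely this commutation step, namely ruling out off-diagonal minimizers so that $W=f_\lambda(\widehat\rho)$, together with the separate treatment of the degenerate spectrum; the remaining manipulations are routine bookkeeping.
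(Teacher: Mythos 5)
Your proposal is correct and follows essentially the same route as the paper: the quartic-to-quadratic reduction via $U=\sigma^{\rT}\sigma$ with the constraint $\det U=1$, the Lagrange condition $\rho U\rho+\tau=\tfrac{\lambda}{2}U^{-1}$, the congruence by $\sqrt{\tau}$, the identification of the solution through the scalar quadratic defining $f_{\lambda}$, and the monotone determinant equation for $\lambda$. The only (minor) difference is that where the paper asserts that the matrix equation (\ref{T}) has a unique positive definite solution commuting with $\widetilde{\rho}$ by analogy with \cite[Lemma~10.1]{VP_2015}, you verify this commutation directly in the eigenbasis of $\widehat{\rho}$, including the degenerate-spectrum case, which makes that step self-contained.
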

%%%%%%%%%%%%%%%%%%%%%%%%%%%%%%%%%%%%%%%%%%%%%%%%%%%%%%%%%%%%%%%%%%%%%%%%%%%%%%%%%%%%%%%%%%%%%%%%%%%%
\begin{proof}
In view of the symmetry of the matrices $\rho$ and $\tau$ in (\ref{taupos}), the quartic polynomial $\Psi$ in (\ref{Psimin}) can be represented as
\begin{align}
\nonumber
    \Psi(\sigma)
    & =
    \frac{1}{2}
    \bra
        \sigma\rho\sigma^{\rT},
        \sigma\rho\sigma^{\rT}
    \ket
  +
  \bra
    \mu \sigma^{\rT},
    \mu \sigma^{\rT}
  \ket\\
\nonumber
    & =
    \frac{1}{2}
    \bra
        \rho,
        U \rho U
    \ket
    +
    \bra
        \tau,
        U
    \ket\\
\label{PsiU}
    & =:
        \wt{\Psi}(U),
\end{align}
which is a quadratic function of a real positive semi-definite symmetric matrix
\begin{equation}
\label{Usig}
    U:= \sigma^{\rT}\sigma.
\end{equation}
Due to the identity $\sigma\bJ \sigma^{\rT} = \det\sigma \bJ$, which holds for any matrix $\sigma\in \mR^{2\x 2}$, the symplectic property $\sigma \in \Sp(\bJ)$ is equivalent to $\det\sigma = 1$, and hence, the problem  (\ref{Psimin}) reduces to a constrained  quadratic minimization problem
\begin{equation}
\label{PsiUmin}
  \wt{\Psi}(U)
  \longrightarrow
  \min,
  \qquad
  U = U^{\rT}\succ 0,
  \quad
  \ln\det U = 0.
\end{equation}
By endowing the constraint $\ln\det U = 0$ with a Lagrange multiplier $\lambda\in \mR$, the Lagrange function for (\ref{PsiUmin}) takes the form
\begin{equation}
\label{cL}
    \cL_{\lambda}(U)
    =
    \wt{\Psi}(U) - \frac{\lambda}{2}\ln\det U,
\end{equation}
where the $\frac{1}{2}$ factor is introduced for convenience.
Any locally optimal  solution $U$ of the problem (\ref{PsiUmin})  is necessarily a stationary point of the Lagrange function $\cL_{\lambda}(U)$ for some $\lambda$, and hence, the corresponding (unconstrained) Frechet derivative of (\ref{cL}) vanishes:
\begin{align}
\nonumber
    \d_U
    \cL_{\lambda}(U)
    & =
    \d_U\wt{\Psi}(U)
    -
    \frac{\lambda}{2}\d_U \ln\det U\\
\label{dLdU0}
    & =
    \rho U \rho + \tau
    -
    \frac{\lambda}{2}
    U^{-1}
    =0,
\end{align}
where use is made of (\ref{PsiU}) and (\ref{dlndet}). Since the matrices $\tau$ and $U$ in (\ref{taupos}) and (\ref{Usig}) are positive definite, and  $\rho$ is symmetric, then (\ref{dLdU0}) implies that $\lambda>0$ and
\begin{align*}
    U
    & =
    \frac{\lambda}{2}
    (\rho U \rho + \tau)^{-1}\\
    & =
    \frac{\lambda}{2}
    \tau^{-1/2}
    (\tau^{-1/2}\rho U \rho\tau^{-1/2} + I_2)^{-1} \tau^{-1/2}.
\end{align*}
By left and right multiplying both sides of this equality by $\sqrt{\tau}$, it follows that an appropriate transformation of the matrix $U$ satisfies
\begin{equation}
\label{T}
    T
     :=
    \sqrt{\tau}
    U
    \sqrt{\tau}
    =
    \frac{\lambda}{2}
    (\wt{\rho}T \wt{\rho}+ I_2)^{-1},
\end{equation}
where
\begin{equation}
\label{rhotilde}
  \wt{\rho}
  :=
  \tau^{-1/2}\rho \tau^{-1/2}
\end{equation}
is an auxiliary  real symmetric matrix.   A reasoning, similar to that for solving a class of algebraic Riccati equations in \cite[Lemma~10.1]{VP_2015}, shows that (\ref{T}) has a unique solution $T\succ 0$ and this solution is representable as
\begin{equation}
\label{Tfrho}
    T = f_{\lambda}(\wt{\rho}).
\end{equation}
Here, for any given $\lambda >0$, a function  $f_{\lambda}(z)$ of a real variable $z$ is evaluated at the real symmetric matrix $\wt{\rho}$ (see, for example,  \cite{H_2008}), whereby the matrix $T$ in (\ref{Tfrho}) commutes with $\wt{\rho}$. This allows  $f_{\lambda}(z)$ to be found as the unique positive solution of the scalar equation
$$
    f_{\lambda}(z) = \frac{\lambda}{2(1 + z^2 f_{\lambda}(z))}
$$
or, equivalently, as the unique positive root $w$ of the quadratic polynomial
$
    z^2 w^2  + w - \frac{\lambda}{2}
$, which leads to (\ref{flam}). Now, since $\det\sigma = 1$, then (\ref{Usig}), (\ref{T}) and (\ref{Tfrho}) imply that the Lagrange multiplier $\lambda$ must satisfy
\begin{equation}
\label{detT}
    \det T
    =
    \det \tau
    =
    \det f_{\lambda}(\wt{\rho})
    =
    f_{\lambda}(r_1)
    f_{\lambda}(r_2),
\end{equation}
where $r_1$ and $r_2$ are the eigenvalues of the matrix $\wt{\rho}$ in (\ref{rhotilde}). The right-hand side of (\ref{detT}) is a strictly increasing continuous function of $\lambda>0$, which varies from 0 to $+\infty$, whereby (\ref{lamsol}) has a unique solution $\lambda$. The corresponding  matrix $U$ in (\ref{T}) is recovered from $T$ in (\ref{Tfrho}) as described by (\ref{U}), and the symplectic matrix $\sigma$ is found from (\ref{Usig}) according to (\ref{sigopt}) and (\ref{rot}). It now remains to note that the matrix $\wt{\rho} = \tau^{-1/2}\rho\tau^{-1}\sqrt{\tau}$ in (\ref{rhotilde}) is obtained through a similarity transformation from and is, therefore,  isospectral to $\rho\tau^{-1}$.
\end{proof}
%%%%%%%%%%%%%%%%%%%%%%%%%%%%%%%%%%%%%%%%%%%%%%%%%%%%%%%%%%%%%%%%%%%%%%%%%%%%%%%%%%%%%%%%%%%%%%%%%%%%

In addition to its monotonicity, the left-hand side $h(\lambda)$ of (\ref{lamsol}) is a smooth convex function of $\lambda$ (see, for example,  Fig.~\ref{fig:lamsol}), which makes this equation  amenable to effective numerical solution by  a standard root-finding algorithm. In particular,  application of the Newton-Raphson method with the initial condition
\begin{equation}
\label{lam0}
    \lambda_0
    :=
    2\sqrt{\det\tau}
\end{equation}
produces a monotonically decreasing sequence $\lambda_1, \lambda_2,\ldots$ as $\lambda_{k+1}:= \lambda_k + \frac{\det\tau - h(\lambda_k)}{h'(\lambda_k)}$  (so that $\lambda_0 \< \lambda_1 \> \lambda_2 \> \ldots$) which converges at a quadratic rate to the solution of (\ref{lamsol}),
with the derivative of the function $h$ computed as
$$
    h'(\lambda)
    =
    \left(
        \frac{2}{\lambda}
        -
        \frac{r_1^2}{\left(1+\sqrt{1+2r_1^2\lambda}\right)\sqrt{1+2r_1^2\lambda}}
        -
        \frac{r_2^2}{\left(1+\sqrt{1+2r_2^2\lambda}\right)\sqrt{1+2r_2^2\lambda}}
    \right)
    h(\lambda)
$$
in view of (\ref{flam}). The particular choice of the initial condition in (\ref{lam0}) yields $h(\lambda_0)\< \frac{1}{4}\lambda_0^2 = \det\tau$, whereby $\lambda_0$ is a guaranteed lower bound for the solution.

%%%%%%%%%%%%%%%%%%%%%%%%%%%%%%%%%%%%%%%%%%%%%%%%%%%%%%%%%%%%%%%%%%%%%%%%%%%%%%%%%%%%%%%%%%%%%%%%%%%%
\begin{figure}[thpb]
      \centering
      \includegraphics[width=90mm]{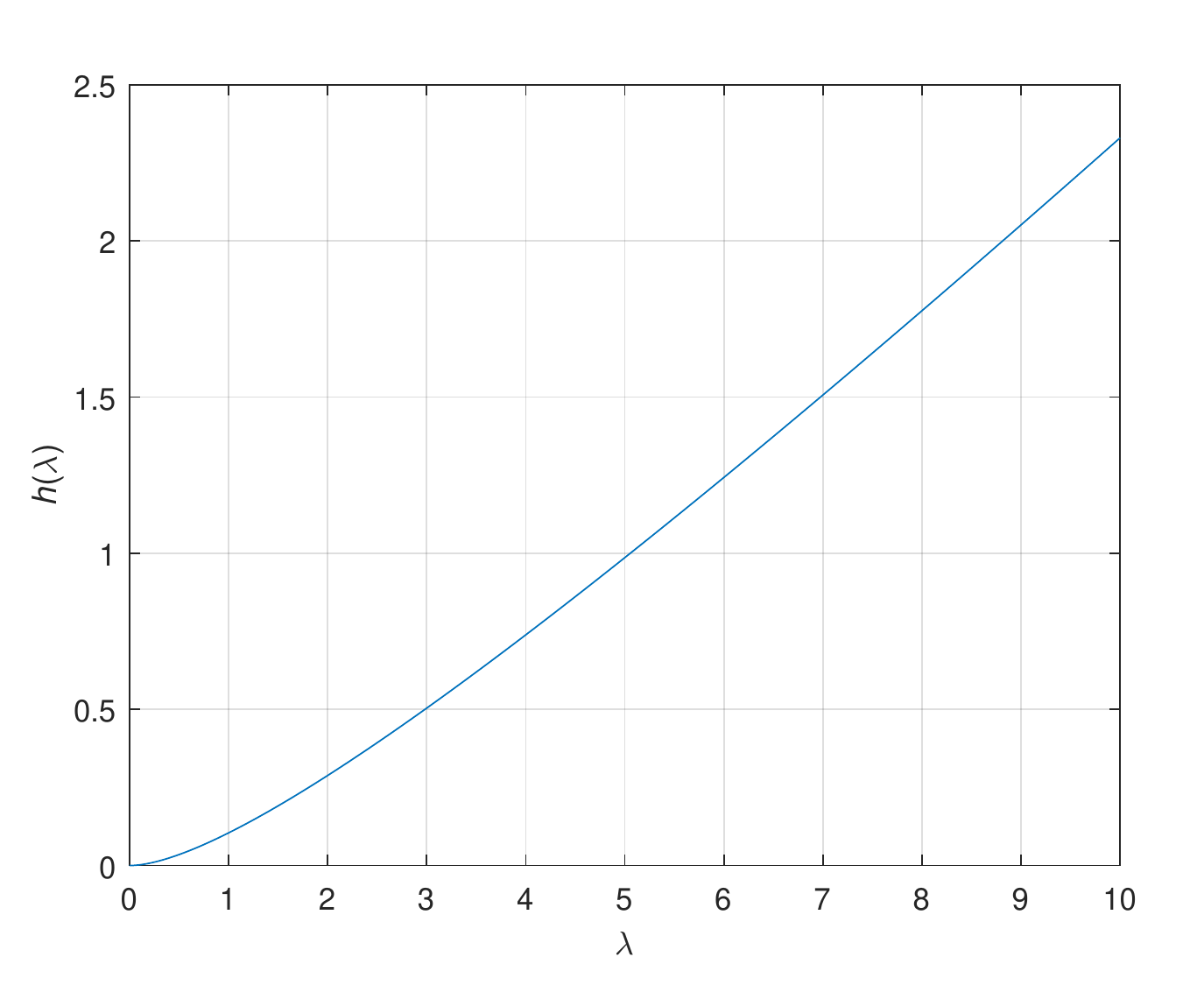}
      \caption{The left-hand side of the equation (\ref{lamsol}) for finding the Lagrange multiplier $\lambda>0$. In this example, $r_1=-0.7228$ and     $r_2 = 1.9527$.}
      \label{fig:lamsol}
   \end{figure}
%%%%%%%%%%%%%%%%%%%%%%%%%%%%%%%%%%%%%%%%%%%%%%%%%%%%%%%%%%%%%%%%%%%%%%%%%%%%%%%%%%%%%%%%%%%%%%%%%%%%

Although the ``quartic-to-quadratic''  reduction (\ref{PsiU}) remains valid for many-mode settings,
the proof of Theorem~\ref{th:Psimin} relies on the surjectivity of the map $\sigma \mapsto U$ in (\ref{Usig}) in the sense that any positive definite matrix $U \in \mS_2$ with $\det U = 1$ is factorizable as $U = \sigma^{\rT}\sigma$ for some symplectic matrix $\sigma \in \Sp(\bJ)$. This property does not extend to higher dimensions $\nu$, for which the image of the corresponding symplectic group  is a proper subset of $\fU:= \{U \in \mS_{\nu}:\ U\succ 0,\ \det U = 1\}$. In this case, the minimization of the quadratic function $\wt{\Psi}(U)$ in (\ref{PsiUmin})  over the larger set $\fU$ can only provide a lower (and not necessarily achievable) bound for the actual minimum value of the quartic polynomial $\Psi(\sigma)$.

%%%%%%%%%%%%%%%%%%%%%%%%%%%%%%%%%%%%%%%%%%%%%%%%%%%%%%%%%%%%%%%%%%%%%%%%%%%%%%%%%%%%%%%%%%%%%%%%%%%%%%%%
\section{An application to balancing cascaded one-mode oscillators}\label{sec:example}
%%%%%%%%%%%%%%%%%%%%%%%%%%%%%%%%%%%%%%%%%%%%%%%%%%%%%%%%%%%%%%%%%%%%%%%%%%%%%%%%%%%%%%%%%%%%%%%%%%%%%%%%

Consider a cascade of one-mode OQHOs with dimensions
\begin{equation}
\label{n2}
    n_1=\ldots =n_N = 2.
\end{equation}
In regard to such systems, it can be assumed, without loss of generality, that the CCR matrices in (\ref{XXk}) are given by
\begin{equation}
\label{ThetaJ}
    \Theta_1 = \ldots = \Theta_N = \frac{1}{2}\bJ.
\end{equation}
In accordance with (\ref{posmom}) and (\ref{bJ}), this corresponds to the pairs of position and momentum operators as system variables of the component oscillators. The symplectic similarity transformations of the oscillators  are specified by elements $S_1, \ldots, S_N$ of the common symplectic group $\Sp(\bJ)$ (regardless of the factor $\frac{1}{2}$ in (\ref{ThetaJ})) which can be represented as
\begin{equation}
\label{symp22}
    S_k
    =
    \sR(\phi_k)
    \begin{bmatrix}
        \sqrt{\varsigma_k} & 0\\
        0& \frac{1}{\sqrt{\varsigma_k}}
    \end{bmatrix}
    \sR(\psi_k)
\end{equation}
in terms of positive scalars $\varsigma_k$ and angles $\phi_k$, $\psi_k$ (see, for example, \cite{WPGGCRSL_2012}), where use is made of the rotation matrix (\ref{rot}).

In the framework of the mean square sensitivity minimization for the purity functional, described in Section~\ref{sec:opt},  the matrix $S_k$ can be found by independently solving the  optimization problem (\ref{Psikupmin}), associated with the $k$th oscillator, for every $k=1, \ldots, N$.  In the one-mode case (\ref{n2}), this amounts to applying Theorem~\ref{th:Psimin} with the parameters
\begin{equation}
\label{rhomuab}
    \rho:= 2\sqrt{a_k} \rho_k,
    \qquad
    \mu := \sqrt{b_k} \mu_k,
\end{equation}
where $\rho_k$ and $\mu_k$ are the Frechet derivatives in (\ref{rhokmuk}) evaluated at the original realization of the component oscillator, and $a_k$ and $b_k$ are the upper bounds for the covariances of the uncertainties in the energy and coupling matrices in the sense of (\ref{Sigmakup}) (or (\ref{covup}) if they are uncorrelated). A comparison of (\ref{symp22}) with (\ref{sigopt})  leads to
\begin{equation}
\label{SSU}
    S_k^{\rT} S_k
    =
    \sR(\psi_k)^{\rT}
        \begin{bmatrix}
        \varsigma_k & 0\\
        0& \frac{1}{\varsigma_k}
    \end{bmatrix}
    \sR(\psi_k)
    =
    U.
\end{equation}
Therefore, whilst the angle $\phi_k$ in (\ref{symp22}) can be arbitrary,
$\varsigma_k$ and  $\frac{1}{\varsigma_k}$ are the eigenvalues of the corresponding matrix $U$, specified in Theorem~\ref{th:Psimin}, and the other angle $\psi_k$ is such that the columns of the matrix $\sR(\psi_k)^{\rT} = \sR(-\psi_k)$ are the eigenvectors of $U$.

We will now provide the results of a numerical experiment on balancing a cascade of $N=3$ one-mode oscillators with $m=6$ input field channels and   the energy and coupling matrices
\begin{align*}
    R_1
    & =
    \begin{bmatrix}
    0.8612 &   0.1874\\
    0.1874 &   0.1597
    \end{bmatrix},
    \qquad\quad
    R_2
    =
    \begin{bmatrix}
   -0.1423 &  -0.1922\\
   -0.1922 &   0.5288
   \end{bmatrix},
   \quad\ \
   R_3
   =
   \begin{bmatrix}
   -0.7664 &   0.8088\\
    0.8088 &  -0.9606
    \end{bmatrix},\\
    M_1
    & =
    \begin{bmatrix}
    1.2255 &  -0.3905\\
    0.4671 &  -0.4381\\
   -0.9366 &   1.4755\\
    0.5177 &   2.9142\\
    0.0847 &  -0.8018\\
    1.2692 &   1.0480
    \end{bmatrix},
    \quad
    M_2
    =
    \begin{bmatrix}
    0.0272 &  -2.5614\\
   -0.5771 &  -1.1778\\
    0.3814 &  -0.6270\\
    1.1624 &  -1.2453\\
   -0.7125 &   0.3623\\
    0.6019 &   0.9182
    \end{bmatrix},
    \quad
    M_3
    =
    \begin{bmatrix}
    1.0184 &  -1.5465\\
   -0.4887 &   3.5010\\
    1.6960 &  -0.0958\\
    0.5983 &  -0.2899\\
   -0.3359 &  -2.4758\\
   -0.4739 &  -0.7769
   \end{bmatrix}
\end{align*}
which were randomly generated subject to the condition that
the matrices $A_1$, $A_2$, $A_3$ in (\ref{Ak}) are Hurwitz.  The Frechet derivatives
\begin{align*}
    \rho_1
    & =
    \begin{bmatrix}
   2.5889 &  0.6171\\
   0.6171 &   -2.4492
    \end{bmatrix},
    \qquad\quad\
    \rho_2
    =
    \begin{bmatrix}
    -1.8661 &  0.7260\\
   0.7260 &  0.1425
   \end{bmatrix},
   \qquad
   \rho_3
   =
   \begin{bmatrix}
    -4.5517 &   -1.5005\\
    -1.5005 &   -0.2675
    \end{bmatrix},\\
    \mu_1
    & =
    \begin{bmatrix}
   21.3088 &  -3.1397\\
   -6.3340 &  -1.2695\\
    8.5925 & -12.1129\\
    3.3551 &   7.8397\\
    2.3532 &   3.2141\\
  -13.2210 &  -8.7381
    \end{bmatrix},
    \quad
    \mu_2
    =
    \begin{bmatrix}
   -2.8669 &  -0.9059\\
    4.4807 &   1.5072\\
    5.6762 &  -0.8180\\
    2.3510 &   0.4416\\
    5.6586 &  -0.5636\\
    4.1997 &   0.0501
    \end{bmatrix},
    \quad
    \mu_3
    =
    \begin{bmatrix}
   -0.7576 &  -1.9211\\
  -11.4170 &   0.8482\\
   -3.2624 &   2.4921\\
    7.2670 &   4.9159\\
  -14.6064 &  -1.3306\\
    0.0638 &   7.1250
   \end{bmatrix}
\end{align*}
in (\ref{rhokmuk}) are computed as described in Section~\ref{sec:recder}. The upper bounds $a_k$ and $b_k$ (also randomly generated) on the covariances of the parametric uncertainties in (\ref{Sigmakup})  and the contributions $\Psi_k(S_k)$ from the component oscillators in (\ref{Psikup}) to the mean square sensitivity index before and after the balancing (\ref{SRS}) (and their ratios) are given in Tab.~\ref{Tab:numer}.
%%%%%%%%%%%%%%%%%%%%%%%%%%%%%%%%%%%%%%%%%%%%%%%%%%%%%%%%%%%%%%%%%%%%%%%%%%%%%%%%%%%%%%%%%%%
\begin {table}[htbp]
\centering
\caption {Covariance bounds (\ref{Sigmakup}) for parametric uncertainties and contributions (\ref{Psikup}) to the mean square sensitivity index from three oscillators.}\label{Tab:numer}
    \begin{tabular}{|c||c|c|c|}
    \hline
        $k$ & 1 & 2 & 3 \\
      \hline
      \hline
      $a_k$ & 0.0222  &  0.0283 &   0.0067\\
      \hline
      $b_k$ & 0.0351  &  0.2898 &   0.0388\\
      \hline
      $\Psi_k(I_2)$ & 37.9918  & 35.0268 &  19.5730\\
      \hline
      $\Psi_k(S_k)$   & 34.6230  &  12.8844 &   14.4265\\
      \hline
      $\Psi_k(S_k)/\Psi_k(I_2)$   & 0.9113  &   0.3678  &  0.7371\\
      \hline
    \end{tabular}
\end{table}
%%%%%%%%%%%%%%%%%%%%%%%%%%%%%%%%%%%%%%%%%%%%%%%%%%%%%%%%%%%%%%%%%%%%%%%%%%%%%%%%%%%%%%%%%%%

The symplectic transformation matrices, which minimize the functions $\Psi_k$ for the oscillators, are found by using Theorem~\ref{th:Psimin} in combination with (\ref{rhomuab}), (\ref{SSU}) and the Newton-Raphson algorithm specified in Section~\ref{sec:opt}:
$$
    S_1
    =
    \begin{bmatrix}
    0.8085  &  0.0167\\
    0.0167  &  1.2372
    \end{bmatrix},
    \qquad
    S_2
    =
    \begin{bmatrix}
    0.4382 &  -0.0469\\
   -0.0469 &   2.2873
   \end{bmatrix},
   \qquad
    S_3
    =
    \begin{bmatrix}
    0.6788 &  -0.1027\\
   -0.1027 &   1.4886
   \end{bmatrix}.
$$
The reduction in the total value of the mean square sensitivity index, which is achieved  for these matrices, is
$$
    \frac
    {\sum_{k=1}^3 \Psi_k(S_k)}
    {\sum_{k=1}^3 \Psi_k(I_2)}
    =
    0.6689,
$$
which is an intermediate value in comparison with the reduction ratios for the component oscillators at the bottom of Tab.~\ref{Tab:numer}.

%%%%%%%%%%%%%%%%%%%%%%%%%%%%%%%%%%%%%%%%%%%%%%%%%%%%%%%%%%%%%%%%%%%%%%%%%%%%%%%%%%%%%%%%%%%%%%%%%%%%%%%%
\section{Conclusion}\label{sec:conc}
%%%%%%%%%%%%%%%%%%%%%%%%%%%%%%%%%%%%%%%%%%%%%%%%%%%%%%%%%%%%%%%%%%%%%%%%%%%%%%%%%%%%%%%%%%%%%%%%%%%%%%%%

We have considered an approach to balancing cascaded linear quantum stochastic systems via symplectic similarity transformations of their realizations for the generation of Gaussian invariant quantum states. The optimality criterion considered is based on minimizing the mean square sensitivity of the purity functional quantified by the norm of its logarithmic Frechet derivative with respect to the energy and coupling matrices subject to small random perturbations. We have discussed a recursive computation of this performance functional together with a link to the classical Fisher information metric and reduced its optimization to a set of independent problems of minimizing quartic polynomials on symplectic groups. We have developed an effective algorithm for solving these problems in the one-mode setting when they reduce to quadratic optimization problems. We have also outlined the infinitesimal perturbation analysis for translation invariant cascades of identical oscillators using spatial $z$-transforms.  The results of this paper may be of use in the robust generation of pure Gaussian states for the purposes of state preparation in quantum computing and quantum information processing involving quantum stochastic networks, especially those with fractal-like architectures (in the form of acyclic directed graphs such as trees). Beyond the quantum domain, similar ideas are applicable to perturbation analysis and optimization of stochastic versions of port-Hamiltonian systems \cite{VJ_2014} whose dynamics bear special structures reflecting the energetics of classical physical systems.

\appendix

%%%%%%%%%%%%%%%%%%%%%%%%%%%%%%%%%%%%%%%%%%%%%%%%%%%%%%%%%%%%%%%%%%%%%%%%%%%%%%%%%%%%%%%%%%%%%%%%%%%
\section{Classical relative entropy deviation for Gaussian states}
\label{sec:relent}
%%%%%%%%%%%%%%%%%%%%%%%%%%%%%%%%%%%%%%%%%%%%%%%%%%%%%%%%%%%%%%%%%%%%%%%%%%%%%%%%%%%%%%%%%%%%%%%%%%%
We will now discuss a connection of the mean square sensitivity index (\ref{Z}) for the purity functional with an alternative criterion 
which quantifies the perturbations in the matrix $\cP$ in terms of classical distance measures applied to invariant Gaussian quantum states.\footnote{As mentioned in Section~\ref{sec:inv}, inaccuracies in the energy and coupling matrices do not affect the CCRs (\ref{cXX}), thus allowing the Gaussian invariant states to be distinguished by the real parts of their quantum covariance matrices.}
In particular, an important role is played by entropy theoretic statistical distances. Recall that the relative entropy (or the Kullback-Leibler informational divergence \cite{CT_2006}) of a probability measure $\sP$ with respect to a reference probability measure $\sP_*$ (on the same measurable space $(\fX,\fB)$) is defined as the expectation
\begin{align}
\nonumber
    \bD(\sP\|\sP_*)
    & :=
    \bE_{\sP}
    \ln \frac{\rd \sP}{\rd \sP_*}\\
\nonumber
    & =
    \int_{\fX} \ln \frac{\rd \sP}{\rd \sP_*} \rd \sP\\
\label{bD}
    & =
    \lim_{\alpha\to 1}
    \Big(
    \frac{1}{\alpha-1}
    \ln
        \int_{\fX}
        \Big(
            \frac{\rd \sP}{\rd \sP_*}
        \Big)^{\alpha}
        \rd \sP_*
    \Big).
\end{align}
Here, $\sP$ is assumed to be absolutely continuous with respect to $\sP_*$,
and $\frac{\rd \sP}{\rd \sP_*}$ denotes the corresponding Radon-Nikodym derivative \cite{SG_1977,S_1996}. The last equality in (\ref{bD}) links the functional $\bD$ with the relative Renyi  $\alpha$-entropy \cite{R_1961} which,  in the case $\alpha=\frac{1}{2}$, is present in the Hellinger distance between $\sP$ and $\sP_*$ given by
$$
    \sqrt{1-\int_{\fX}
            \sqrt{\frac{\rd \sP}{\rd \sP_*}}
        \rd \sP_*}.
$$
While the choice of a statistical distance is not critical for our purposes, the Kullback-Leibler relative entropy (\ref{bD}) is particularly convenient  in application  to Gaussian distributions and has direct links with the Shannon information theory \cite{G_2008}.

The following lemma, which is concerned with the Gaussian case, is well-known in the context of the maximum entropy principle \cite{CT_2006,ME_1981} (see also \cite[Lemma~4 on pp.~313--314]{P_2006}, \cite[Lemma~9.1 on p.~1129]{VP_2015} and references therein).

%====================================================================
\begin{lemma}
\label{lem:Gauss}
For two zero-mean Gaussian distributions $\cN(0,P)$ and $\cN(0,P_*)$ in $\mR^n$ with nonsingular covariance matrices $P$ and $P_*$, the relative entropy (\ref{bD}) of $\cN(0,P)$ with respect to $\cN(0,P_*)$ takes the form
\begin{align}
\nonumber
    \bitDelta(P\|P_*)
    & :=
    \bD(\cN(0,P)\|\cN(0,P_*))\\
\label{bDGG}
    &=
    \frac{1}{2}
    (
         \Tr \chi
         -
         \ln\det\chi
         -
         n
    ),
\end{align}
where
\begin{equation}
\label{chi}
    \chi
    :=
    P_*^{-1/2}PP_*^{-1/2}.
\end{equation}
The asymptotic behaviour of this quantity, as $P\to P_*$,  is described by
\begin{equation}
\label{Pclose}
    \bitDelta(P,P_*)
    =
    \frac{1}{4}
    \|\chi-I_n\|^2(1+o(1)).
\end{equation}
\hfill$\square$
\end{lemma}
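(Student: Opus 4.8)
The plan is to prove Lemma~\ref{lem:Gauss} in two parts: first derive the closed-form expression~(\ref{bDGG}) for the relative entropy between two zero-mean Gaussians, and then expand it asymptotically to obtain~(\ref{Pclose}). The key fact I would exploit is that the Kullback-Leibler divergence between two Gaussians has a standard representation in terms of the ratio of their densities, and that for zero-mean distributions this ratio is a purely quadratic exponential whose expectation under $\cN(0,P)$ is computable via the trace of a matrix product.

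Let me reconstruct the plan for the main details.

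**Step 1 (density ratio).** First I would write the Radon-Nikodym derivative $\frac{\rd\cN(0,P)}{\rd\cN(0,P_*)}$ explicitly. Using the Gaussian densities $p(x) = (2\pi)^{-n/2}(\det P)^{-1/2}\exp(-\tfrac{1}{2}x^{\rT}P^{-1}x)$ and the analogous $p_*(x)$ with $P_*$, the log-ratio is
\begin{equation*}
    \ln\frac{p(x)}{p_*(x)}
    =
    \frac{1}{2}\ln\frac{\det P_*}{\det P}
    -
    \frac{1}{2}x^{\rT}(P^{-1}-P_*^{-1})x.
\end{equation*}

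**Step 2 (expectation under $\cN(0,P)$).** Next I would take the expectation $\bE_{\cN(0,P)}$, using the scalar identity $\bE_{\cN(0,P)}(x^{\rT}Kx) = \Tr(KP)$ for any symmetric $K$. This yields
\begin{equation*}
    \bitDelta(P\|P_*)
    =
    \frac{1}{2}\ln\frac{\det P_*}{\det P}
    -
    \frac{1}{2}\Tr((P^{-1}-P_*^{-1})P)
    =
    \frac{1}{2}\Big(\Tr(P_*^{-1}P)-\ln\det(P_*^{-1}P)-n\Big).
\end{equation*}
Introducing $\chi$ from~(\ref{chi}) and noting $P_*^{-1}P$ is similar to $\chi = P_*^{-1/2}PP_*^{-1/2}$, so that $\Tr(P_*^{-1}P)=\Tr\chi$ and $\det(P_*^{-1}P)=\det\chi$, gives~(\ref{bDGG}).

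**Step 3 (asymptotics).** For~(\ref{Pclose}) I would set $\chi = I_n + \Delta$ with $\Delta := \chi - I_n\to 0$ as $P\to P_*$, and apply the scalar expansion $g(t):=t-\ln t$ near $t=1$, namely $g(1+s)=1+\tfrac{1}{2}s^2+o(s^2)$, to each eigenvalue of $\chi$. Summing over eigenvalues, $\Tr\chi-\ln\det\chi-n = \sum_j g(1+s_j) - n = \tfrac{1}{2}\sum_j s_j^2(1+o(1)) = \tfrac{1}{2}\Tr(\Delta^2)(1+o(1)) = \tfrac{1}{2}\|\Delta\|^2(1+o(1))$, where $s_j$ are the eigenvalues of $\Delta$ and $\|\Delta\|^2 = \Tr(\Delta^2)$ by symmetry. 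Inserting the factor $\tfrac{1}{2}$ from~(\ref{bDGG}) produces $\tfrac{1}{4}\|\chi-I_n\|^2(1+o(1))$.

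**The main obstacle** is largely bookkeeping rather than conceptual: the result is classical (cited to the maximum-entropy literature), so the genuine work is in keeping the determinant and trace factors consistent through the similarity transformation in Step~2, and in justifying that the second-order Taylor expansion of $g$ can be applied eigenvalue-by-eigenvalue and then reassembled into the Frobenius norm $\|\chi-I_n\|^2$ in Step~3. The diagonalizability of the symmetric matrix $\chi$ makes the functional-calculus step rigorous, so I expect no serious difficulty there.
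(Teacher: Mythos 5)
Your proof is correct. Note that the paper does not actually prove this lemma: it states (\ref{bDGG}) as a classical fact with citations to the maximum-entropy literature, and justifies the asymptotic relation (\ref{Pclose}) only in a footnote, via the first and second Frechet derivatives of $\ln\det$ on the cone of positive definite matrices (namely $\d_{\chi}\ln\det\chi=\chi^{-1}$ from (\ref{dlndet}) and $\d_{\chi}^2\ln\det\chi(M)=-\chi^{-1}M\chi^{-1}$), i.e.\ a second-order Taylor expansion of $\chi\mapsto\Tr\chi-\ln\det\chi-n$ about $\chi=I_n$, where the gradient vanishes and the Hessian is the identity operator on $\mS_n$. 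Your Step~3 reaches the same second-order term by diagonalizing the symmetric matrix $\chi$ and expanding $t-\ln t$ eigenvalue by eigenvalue; this is an elementary and fully equivalent route, with the minor bookkeeping point that the $o(1)$ must be uniform over the eigenvalues, which holds since $g(1+s)-1=\tfrac{1}{2}s^{2}(1+O(s))$ uniformly for small $|s|$ and all eigenvalues of $\chi-I_n$ tend to zero together as $P\to P_*$. Your Steps~1--2 supply the standard density-ratio derivation of the Gaussian Kullback--Leibler formula that the paper omits, and the reduction from $P_*^{-1}P$ to $\chi$ by similarity is handled correctly.
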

%%====================================================================

The relation (\ref{Pclose})\footnote{which follows from (\ref{bDGG}) and (\ref{chi}) in view of the Frechet derivatives (\ref{dlndet}) and $\d_{\chi}^2\ln\det \chi(M) = -\chi^{-1} M \chi^{-1}$ on the set of real positive definite symmetric matrices $\chi$.}  shows that, in a small neighbourhood of the reference matrix  $P_*$,  the quantity $\sqrt{\bitDelta(P,P_*)}$ gives rise to the relative deviation of the actual covariance matrix $P$ from $P_*$:
\begin{equation}
\label{gPP}
    \|P_*^{-1/2} (P-P_*)P_*^{-1/2}\|
    =
    \sqrt{\Bra P-P_*, g(P-P_*)\Ket}.
\end{equation}
Here, $g$ is a positive definite self-adjoint operator which acts on the Hilbert space $\mS_n$ as
\begin{equation}
\label{gM}
    g(M):= P_*^{-1} MP_*^{-1}
\end{equation}
and  specifies the local metric tensor associated with the Fisher information distance  \cite{S_1984}  (on the set of positive definite covariance matrices of order $n$ regarded as a Riemannian manifold).

In application to the robustness of Gaussian state generation with respect to the implementation errors (modelled in Section~\ref{sec:opt} as zero-mean  classical  random vectors with the covariance matrices  (\ref{EEcov})), the relations  (\ref{gPP}) and (\ref{gM}) suggest that the balancing of the oscillators can also be based on minimizing the functional
\begin{equation}
\label{Znew}
    Z
    :=
    \sum_{k=1}^N
    \Bra
        \mho_k^{\rT}(\d_{\vec{E}_k}\vec{\cP})^{\rT}(\cP \ox \cP)^{-1}\d_{\vec{E}_k}\vec{\cP}\mho_k,
        \Sigma_k
    \Ket,
\end{equation}
where $\mho_k$ is the matrix from (\ref{Evech}).
Indeed, averaging of the leading quadratic term of the Kullback-Leibler deviation of the perturbed covariance matrix from its nominal value leads to
\begin{align}
\nonumber
    \bE (\| & \cP^{-1/2}  (\delta \cP) \cP^{-1/2}\|^2)
    =
    \bE
    \Bra
        \delta \cP,
        \cP^{-1} (\delta \cP ) \cP^{-1}
    \Ket\\
\nonumber
    & =
    \bE
    \big(
        \delta \vec{\cP}^{\rT}
        (\cP \ox \cP)^{-1}\delta\vec{\cP}
    \big)\\
\nonumber
    & =
    \bE
    \sum_{j,k=1}^N
    \delta \vec{E}_j^{\rT}
    (\d_{\vec{E}_j}\vec{\cP})^{\rT}
        (\cP \ox \cP)^{-1}
    \d_{\vec{E}_k}\vec{\cP} \delta \vec{E}_k\\
\nonumber
    & =
    \sum_{j,k=1}^N
    \Bra
    (\d_{\vec{E}_j}\vec{\cP})^{\rT}
        (\cP \ox \cP)^{-1}
    \d_{\vec{E}_k}\vec{\cP},
    \cov(\delta \vec{E}_j, \delta \vec{E}_k)
    \Ket\\
\nonumber
    & =
    \eps
    \sum_{k=1}^N
    \Bra
    (\d_{\vec{E}_k}\vec{\cP})^{\rT}
        (\cP \ox \cP)^{-1}
    \d_{\vec{E}_k}\vec{\cP},
    \mho_k \Sigma_k \mho_k^{\rT}
    \Ket\\
\label{epsZnew}
    & =
    \eps Z,
\end{align}
which is proportional to the quantity $Z$ in  (\ref{Znew}) with a constant multiplier $\eps$. In (\ref{epsZnew}), use has also been made of the symmetry of the matrix $\cP$ together with the identity $\cP^{-1} \ox \cP^{-1} = (\cP \ox \cP)^{-1}$. Note  that the entropy theoretic criterion (\ref{Znew}) yields an upper bound on the purity functional sensitivity index (\ref{Z}) in the sense that
\begin{align}
\nonumber
    \bra
        \cP^{-1},
        \delta \cP
    \ket^2
    & =
    \bra
        I_n,
        \cP^{-1/2}
        (\delta \cP)
        \cP^{-1/2}
    \ket^2  \\
\label{ZZbound}
    & \<
    n
    \|
        \cP^{-1/2}
        (\delta \cP)
        \cP^{-1/2}
    \|^2
\end{align}
in view of the Cauchy-Bunyakovsky-Schwarz inequality (applied here to the Frobenius inner product), though the coefficient of proportionality $n$ in (\ref{ZZbound}) is large for many-mode systems. The above described connection is a consequence of the linear relation between the differential entropy \cite{CT_2006} of a classical Gaussian distribution and the log determinant of its covariance matrix. In fact, the quantities $V_k$ and $V_{\>k}$ in (\ref{VV}) and (\ref{Vtail}) (which are parts of the purity functional) are linearly related to the conditional differential entropies \cite{CT_2006,G_2008} of the auxiliary classical Gaussian random vectors  $\xi_k$ and $\Xi_{\>k}$ in (\ref{Xi>=k}) with respect to the vector $\Xi_{k-1}$ in (\ref{Xik}).

%%%%%%%%%%%%%%%%%%%%%%%%%%%%%%%%%%%%%%%%%%%%%%%%%%%%%%%%%%%%%%%%%%%%%%%%%%%%%%%%%%%%%%%%%%%%%%%%%%%%%%%%
\section{Translation invariant cascades}\label{sec:trans}
%%%%%%%%%%%%%%%%%%%%%%%%%%%%%%%%%%%%%%%%%%%%%%%%%%%%%%%%%%%%%%%%%%%%%%%%%%%%%%%%%%%%%%%%%%%%%%%%%%%%%%%%
We will now briefly discuss a translation invariant case when all the component oscillators, described in Section~\ref{sec:sys}, are identical and have a common dimension $n$, CCR matrices $\Theta \in \mA_n$, energy matrices $R\in \mS_n$, coupling matrices $M \in \mR^{m\x n}$, and the state-space matrices
 \begin{align}
\label{A}
    A
    & := 2\Theta (R + M^{\rT}JM),\\
\label{B}
    B
    &:= 2\Theta M^{\rT},\\
\nonumber
    C
    & := 2J M,
\end{align}
corresponding to (\ref{Ak})--(\ref{Ck}), and the transfer functions in (\ref{Fk}) and (\ref{Gk}) given by
\begin{align}
\label{F}
    F(s)
    & := (sI_n - A)^{-1}B,\\
\label{G}
    G(s)
    & := C F(s) + I_m.
\end{align}
In this case, the initial spaces of the oscillators are copies of a common Hilbert space $\fH$,  and the QSDEs (\ref{dXk}) and (\ref{dYk}) take the form
 \begin{align}
\label{dXktrans}
    \rd X_k
    & =
    AX_k \rd t + B \rd Y_{k-1},\\
\label{dYktrans}
    \rd Y_k
    & = CX_k\rd t + \rd Y_{k-1},
\end{align}
with $Y_0:= W$ as before. Following \cite{VP_2014} (see also \cite{SVP_2015a,SVP_2015b}) and regarding the composite system as an infinite cascade of oscillators, we will use the spatial $z$-transforms of the system and output variables:
 \begin{align}
\label{Xz}
    \cX_z(t)
    & :=
    \sum_{k=1}^{+\infty}
    z^{-k}X_k(t),\\
\label{Yz}
    \cY_z(t)
    & :=
    \sum_{k=0}^{+\infty}
    z^{-k}Y_k(t).
\end{align}
These series are mean square convergent for sufficiently large values of the complex parameter $z$. More precisely, since the output variables of the oscillators are linearly related to the system variables and the input fields, the convergence of both  series is guaranteed if
\begin{equation}
\label{zbig}
  |z|>  \limsup_{k\to +\infty}\sqrt[2k]{\bE (X_k(t)^{\rT}X_k(t))},
\end{equation}
provided the upper limit is finite,
which is obtained by using an appropriate modification of  the Cauchy-Hadamard theorem.
The vectors $\cX_z$ and $\cY_z$ consist of time-varying (and not necessarily self-adjoint) operators on the system-field space $\fH^{\ox\infty}\ox \fF$ and satisfy the CCRs
\begin{align}
\nonumber
    [\cX_z, \cX_v^{\rT}]
    & =
    \sum_{j,k=1}^{+\infty}
    z^{-j}v^{-k}
    [X_j,X_k^{\rT}]\\
\nonumber
    & =
    2i
    \sum_{k=1}^{+\infty}
    (zv)^{-k}
    \Theta\\
\label{XzXvcomm}
    & =
    \frac{2i}{zv-1}\Theta%,\\
\end{align}
which follow from the bilinearity of the commutator and the CCRs (\ref{XXk}), provided $|z||v|>1$.

%%%%%%%%%%%%%%%%%%%%%%%%%%%%%%%%%%%%%%%%%%%%%%%%%%%%%%%%%%%%%%%%%%%%%%%%%%%%%%%%%%%%%%%%%%%%%%%%%%%%%%%%
\begin{lemma}
\label{lem:trans}
For all $z\in \mC$ large enough (for example, such as in (\ref{zbig})), the processes $\cX_z$ and $\cY_z$ in (\ref{Xz}) and (\ref{Yz}) satisfy the QSDEs
\begin{align}
\label{dXz}
    \rd \cX_z
    & =
    \cA_z\cX_z \rd t + \cB_z\rd W,\\
\label{dYz}
    \rd \cY_z
    & = \cC_z\cX_z\rd t + \cD_z\rd W.
\end{align}
Here, the $z$-dependent state-space matrices are computed as
\begin{align}
\label{ABz}
    \cA_z
    & :=
    A + \frac{1}{z-1}BC,
    \qquad
    \cB_z := \frac{1}{z-1} B,\\
\label{CDz}
    \cC_z
    & := \frac{z}{z-1}C,
    \qquad\qquad\ \
    \cD_z := \frac{z}{z-1} I_m.
\end{align}
\hfill$\square$
\end{lemma}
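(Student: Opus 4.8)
The plan is to apply the spatial $z$-transform directly to the component QSDEs (\ref{dXktrans}) and (\ref{dYktrans}) term by term, and then eliminate the auxiliary increment $\rd\cY_z$ so as to obtain a closed system driven only by $\rd W$. First I would differentiate the series (\ref{Xz}) and (\ref{Yz}) under the summation sign, which is legitimate for $|z|$ large enough in the sense of (\ref{zbig}), since this secures mean square convergence of the series and of their Ito increments. Applying $\sum_{k\geq 1}z^{-k}$ to (\ref{dXktrans}) yields $\rd\cX_z = A\cX_z\rd t + B\sum_{k\geq 1}z^{-k}\rd Y_{k-1}$, and the shifted sum reindexes (via $j=k-1$) as $\sum_{k\geq 1}z^{-k}\rd Y_{k-1} = z^{-1}\sum_{j\geq 0}z^{-j}\rd Y_j = z^{-1}\rd\cY_z$, using that $\cY_z$ in (\ref{Yz}) starts at $k=0$ with $Y_0=W$. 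This produces the intermediate relation $\rd\cX_z = A\cX_z\rd t + z^{-1}B\rd\cY_z$.

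Next I would treat (\ref{dYktrans}) analogously, being careful to split off the $k=0$ term $\rd Y_0 = \rd W$ which is not covered by the recursion. This gives $\rd\cY_z = \rd W + C\cX_z\rd t + z^{-1}\rd\cY_z$, whence $(1-z^{-1})\rd\cY_z = C\cX_z\rd t + \rd W$ and therefore $\rd\cY_z = \frac{z}{z-1}(C\cX_z\rd t + \rd W)$, which is exactly (\ref{dYz}) with the matrices $\cC_z$ and $\cD_z$ in (\ref{CDz}). Substituting this expression for $\rd\cY_z$ into the intermediate relation above and using $z^{-1}\cdot\frac{z}{z-1} = \frac{1}{z-1}$ gives $\rd\cX_z = \big(A + \frac{1}{z-1}BC\big)\cX_z\rd t + \frac{1}{z-1}B\rd W$, which is (\ref{dXz}) with $\cA_z$ and $\cB_z$ as in (\ref{ABz}), completing the derivation.

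The algebra is elementary, so the only genuine point requiring care is the analytic justification of the term-by-term differentiation and reindexing of the infinite series. The hard part will be confirming that the interchange of the summation and the Ito differential (and the cancellation of the geometric-type shift producing the factor $z^{-1}$) is valid; this rests on the convergence bound (\ref{zbig}) together with the linear dependence of the output increments $\rd Y_k$ on the system variables and the input $\rd W$, which keeps the partial sums mean square bounded. Everything else is a direct consequence of the linearity of the cascade recursion.
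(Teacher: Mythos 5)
Your proof is correct and follows essentially the same route as the paper's: apply the spatial $z$-transform to the component QSDEs, use the shift identities $\sum_{k\geq 1}z^{-k}Y_{k-1}=z^{-1}\cY_z$ and $\sum_{k\geq 1}z^{-k}Y_k=\cY_z-W$, solve the resulting relation for $\rd\cY_z$, and substitute back to obtain (\ref{dXz}). Your additional remark on justifying the term-by-term manipulation under the convergence condition (\ref{zbig}) is consistent with how the paper handles this point.
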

%%%%%%%%%%%%%%%%%%%%%%%%%%%%%%%%%%%%%%%%%%%%%%%%%%%%%%%%%%%%%%%%%%%%%%%%%%%%%%%%%%%%%%%%%%%%%%%%%%%%%%%%
\begin{proof}
Application of the $z$-transforms (\ref{Xz}) and (\ref{Yz}) to the sequence of QSDEs (\ref{dXktrans}) and (\ref{dYktrans}) with $k=1, 2, \ldots$ leads to
\begin{align}
\label{dXz1}
    \rd \cX_z
    & =
    A\cX_z \rd t + \frac{1}{z}B \rd \cY_z,\\
\label{dYz1}
    \rd (\cY_z-W)
    & = C\cX_z\rd t + \frac{1}{z}\rd \cY_z.
\end{align}
Here, use is also made of the convention $Y_0=W$ and the identities
$$
    \sum_{k=1}^{+\infty}
    z^{-k} Y_{k-1}
     =
    \frac{1}{z}
    \cY_z,
    \qquad
    \sum_{k=1}^{+\infty}
    z^{-k} Y_k
    =
    \cY_z - W.
$$
The QSDE (\ref{dYz}) with the matrices (\ref{CDz}) is obtained by solving (\ref{dYz1}) for $\rd \cY_z$ as
$$
    \rd \cY_z
    =
    \frac{z}{z-1}
    (C\cX_z\rd t + \rd W)
$$
and substituting the result into (\ref{dXz1}), which leads to  the QSDE (\ref{dXz}) with the matrices (\ref{ABz}).
\end{proof}
%%%%%%%%%%%%%%%%%%%%%%%%%%%%%%%%%%%%%%%%%%%%%%%%%%%%%%%%%%%%%%%%%%%%%%%%%%%%%%%%%%%%%%%%%%%%%%%%%%%%%%%%

In application to the matrices $A$ and $B$ of an individual oscillator   in (\ref{A}) and (\ref{B}),   the PR property (\ref{APRk}) takes the form
$$
    A \Theta + \Theta A^{\rT} + BJB^{\rT}  = 0
$$
and leads to a similar PR condition for the  matrices $\cA_z$ and $\cB_z$ in (\ref{ABz}):
\begin{equation}
\label{XzXvPR}
    \frac{1}{zv-1}(\cA_z \Theta + \Theta \cA_v^{\rT}) + \cB_z J \cB_v^{\rT} = 0,
\end{equation}
which is equivalent to the preservation of the CCRs (\ref{XzXvcomm}) under the QSDE (\ref{dXz}).

Now,
if the matrix $A$ in (\ref{A}) is Hurwitz, then so is the matrix $\cA_z$ in (\ref{ABz}) for all sufficiently  large $z$, and the set
\begin{equation}
\label{fZ}
    \fZ:=\{z\in \mC:\ \cA_z\ {\rm is\ Hurwitz}\}
\end{equation}
is nonempty and open. In this case, not only every finite subsystem of the infinite cascade has a unique Gaussian invariant state, but a similar property also holds for the process $\cX_z$ which involves all the system variables of the cascade.

In view of (\ref{FG}), (\ref{F}) and (\ref{G}), the transfer function from the input quantum Wiener process $W$ to the vector $X_k$ of system variables of the $k$th oscillator in the translation invariant cascade is $FG^{k-1}$ for all $k=1,2,\ldots$. Hence (see also (\ref{Xktilde}) and (\ref{Yktilde})),  the transfer function from $W$ to $\cX_z$ can be computed in the right-half plane $\Re s>0$  in terms of the transfer functions $F$ and $G$ and directly from the QSDE (\ref{dXz}) as
\begin{align}
\nonumber
    \Phi_z(s)
    & :=
    \sum_{k=1}^{+\infty}
    z^{-k} F(s)G(s)^{k-1}\\
\nonumber
    & =
    \frac{1}{z}
    F(s)
    \sum_{k=1}^{+\infty}
    \Big(\frac{1}{z}G(s)\Big)^{k-1}\\
\nonumber
    & =
    F(s)(zI_m - G(s))^{-1}\\
\label{Phi}
    & =
    (sI_n - \cA_z)^{-1} \cB_z
\end{align}
for any $z\in \mC$ satisfying
\begin{equation}
\label{zGbig}
      |z|> \|G\|_{\infty}.
\end{equation}
Therefore, if $\cA_z$ is Hurwitz (that is, $z\in \fZ$ in view of (\ref{fZ})), the last equality in (\ref{Phi}) implies that $\Phi_z$ belongs to the Hardy space $\cH_2$. On the other hand, under the assumption that the matrix $A$ is Hurwitz, the second to last equality  in (\ref{Phi}) leads to an upper bound for the $\cH_2$-norm of $\Phi_z$:
\begin{align}
\nonumber
    \|\Phi_z\|_2
    & :=
    \sqrt{\frac{1}{2\pi}\int_{-\infty}^{+\infty}\|\Phi_z(i\lambda)\|^2 \rd \lambda }\\
\nonumber
    & \< \|F\|_2\|(zI_m - G)^{-1}\|_{\infty}\\
\label{Phiup}
    & \< \frac{\|F\|_2}{|z|- \|G\|_{\infty}}
\end{align}
in view of (\ref{zGbig}).
In (\ref{Phiup}), we have also used the inequality\footnote{well known in the context of Banach algebras and their applications to the small gain theorem}  $\|(I_m - H)^{-1}\|_{\infty} \< \frac{1}{1-\|H\|_{\infty}}$ which holds under the condition $\|H\|_{\infty}<1$ and follows from the submultiplicativity of the $\cH_{\infty}$-norm.  
In particular, if, for given $z$ and $v$ from the set (\ref{fZ}), the corresponding processes $\cX_z$ and $\cX_v$ have finite initial second moments (for example, both $z$ and $v$ satisfy (\ref{zbig}) at time $t=0$), then they also have a steady-state quantum covariance matrix
\begin{align}
\nonumber
    \cP_{zv}
    & =
    \lim_{t\to +\infty} \bE(\cX_z(t)\cX_v(t)^{\rT})\\
\label{cPzv}
    & =
    \sum_{j,k=1}^{+\infty}
    z^{-j}v^{-k}P_{jk},
\end{align}
which is the unique solution of the ASE
\begin{equation}
\label{PzvASE}
    \cA_z \cP_{zv} + \cP_{zv} \cA_v^{\rT} + \cB_z \Omega \cB_v^{\rT} = 0.
\end{equation}
Note that (\ref{XzXvPR}) is part of this equation.
In accordance with (\ref{cPN}), the matrix $P_{jk}$ in (\ref{cPzv}) is the real part of the invariant quantum covariance matrix of the vectors $X_j$ and $X_k$ of system variables for the $j$th and $k$th oscillators.
In accordance with (\ref{bLvec}),  the vectorization of the ASE (\ref{PzvASE}) allows its solution  to be represented as
\begin{equation}
\label{Pzvvec}
    \vec{\cP}_{zv}
    =
    -(\cA_v \op \cA_z)^{-1} \vect(\cB_z \Omega \cB_v^{\rT}).
\end{equation}
In view of the rational dependence of the matrices $\cA_z$ and $\cB_z$ on $z$ in (\ref{ABz}), it follows from (\ref{Pzvvec}) that $\cP_{zv}$ is also a rational function of $z$ and $v$:
\begin{align}
\nonumber
    \vec{\cP}_{zv}
    = &
    -\frac{1}{(z-1)(v-1)}\\
\nonumber
    & \x
    \Big(
        I_{n^2}
        +
        \frac{1}{v-1}
        K
        +
        \frac{1}{z-1}
        L
    \Big)^{-1}\\
\label{Pzvvec1}
    & \x (A\op A)^{-1}
    \vect(B\Omega B^{\rT}).
\end{align}
Here, the matrix $A\op A$ is nonsingular (moreover, Hurwitz) due to $A$ being Hurwitz, and use is made of the auxiliary matrices
\begin{align}
\label{K}
    K & := (A\op A)^{-1}
    ((BC)\ox I_n),\\
\label{L}
    L & := (A\op A)^{-1}
    (I_n\ox (BC)).
\end{align}
The representation (\ref{Pzvvec1})--(\ref{L}) can, in principle, be employed as a generating function of the steady-state matrices $P_{jk}$ for the infinite cascade. However, its straightforward use is complicated by the fact that, in general, the matrices $K$ and $L$ do not commute. We will therefore consider an upper bound for the covariance matrices.

%%%%%%%%%%%%%%%%%%%%%%%%%%%%%%%%%%%%%%%%%%%%%%%%%%%%%%%%%%%%%%%%%%%%%%%%%%%%%%%%%%%%%%%%%%%%%%%%%%%%%%%%
\begin{theorem}
\label{th:Pkk}
Suppose the matrix $A$ of a component oscillator in the translation invariant cascade is Hurwitz. Then, for any     $k = 1,2,\ldots$,  the steady-state covariance matrix of the system variables in the $k$th OQHO satisfies
\begin{equation}
\label{Pkkupper}
    \Tr P_{kk} \<     2\|F\|_2^2    \|G\|_{\infty}^{2(k-1)},
\end{equation}
where $\|F\|_2$ and $\|G\|_{\infty}$ are the $\cH_2$ and $\cH_{\infty}$-norms of the transfer functions in (\ref{F}) and (\ref{G}).
\hfill$\square$
\end{theorem}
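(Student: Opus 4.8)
The plan is to reduce the bound to a frequency-domain estimate on the diagonal block $P_{kk}$ and then close it with elementary norm inequalities. First I would use the fact, recorded just before the theorem, that in the translation invariant cascade the transfer function from the input quantum Wiener process $W$ to the vector $X_k$ of the $k$th oscillator is $FG^{k-1}$, with $F$ and $G$ given by (\ref{F}) and (\ref{G}). Since $P_{kk}$ depends only on the first $k$ oscillators (spatial causality), the steady-state quantum covariance matrix of $X_k$ admits the same frequency-domain representation as in (\ref{Ppos1}) and (\ref{cPpos1}), so that
\[
    P_{kk} + i\Theta
    =
    \frac{1}{2\pi}
    \int_{-\infty}^{+\infty}
    (FG^{k-1})(i\lambda)\,\Omega\,(FG^{k-1})(i\lambda)^*
    \rd \lambda,
\]
which is precisely the $(k,k)$ diagonal block of (\ref{cPpos1}) for the composite of oscillators $1,\ldots,k$ restricted to the variables $X_k$.

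Next I would take the trace. Because $\Theta \in \mA_n$ is antisymmetric, $\Tr \Theta = 0$, so $\Tr P_{kk}$ equals the trace of the right-hand side. The key algebraic observation is to control the quantum Ito matrix $\Omega = I_m + iJ$: since $J$ is real antisymmetric with $J^2 = -I_m$, its eigenvalues are $\pm i$, whence $\Omega$ is Hermitian with eigenvalues $0$ and $2$, so that $0 \preccurlyeq \Omega \preccurlyeq 2I_m$. For any complex matrix $M$ this yields $M(2I_m - \Omega)M^* \succcurlyeq 0$, that is $M\Omega M^* \preccurlyeq 2MM^*$, and hence $\Tr(M\Omega M^*) \< 2\Tr(MM^*) = 2\|M\|^2$ by monotonicity of the trace on Hermitian matrices, with $\|\cdot\|$ the Frobenius norm as in (\ref{Phiup}).

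Applying this pointwise with $M = (FG^{k-1})(i\lambda)$ and integrating then gives
\[
    \Tr P_{kk}
    \<
    \frac{1}{2\pi}
    \int_{-\infty}^{+\infty}
    2\|(FG^{k-1})(i\lambda)\|^2
    \rd \lambda
    =
    2\|FG^{k-1}\|_2^2,
\]
where the integral is recognized as the squared $\cH_2$-norm of $FG^{k-1}$. Finally I would invoke the mixed estimate $\|FH\|_2 \< \|F\|_2\|H\|_{\infty}$ (valid since $\|F(i\lambda)H(i\lambda)\| \< \|F(i\lambda)\|\,\|H\|_{\infty}$ at each frequency) together with the submultiplicativity of the $\cH_{\infty}$-norm already used for (\ref{Phiup}), obtaining $\|FG^{k-1}\|_2 \< \|F\|_2\|G\|_{\infty}^{k-1}$. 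Combining the two displays establishes (\ref{Pkkupper}).

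I expect the only real obstacle to be the rigorous justification of the first step, namely that the diagonal block $P_{kk}$ of the infinite cascade genuinely has the stated finite-dimensional $\cH_2$ representation. This is secured by spatial causality (so that only the first $k$ oscillators enter $P_{kk}$) and by the single-OQHO formula (\ref{Ppos1}) applied to the composite of oscillators $1,\ldots,k$; the remaining steps are a routine chain of trace and norm inequalities.
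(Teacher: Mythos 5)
Your proposal is correct and follows essentially the same route as the paper: the frequency-domain representation $P_{kk}+i\Theta=\frac{1}{2\pi}\int F G^{k-1}\Omega (FG^{k-1})^*\rd\lambda$, the bound $\Omega\preccurlyeq 2I_m$, submultiplicativity of the $\cH_{\infty}$-norm, and $\Tr\Theta=0$ by antisymmetry. The only cosmetic difference is that you take the trace first and then apply scalar norm inequalities ($\|FG^{k-1}\|_2\<\|F\|_2\|G\|_{\infty}^{k-1}$), whereas the paper keeps the estimate at the level of a matrix inequality before tracing; the two are equivalent.
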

%%%%%%%%%%%%%%%%%%%%%%%%%%%%%%%%%%%%%%%%%%%%%%%%%%%%%%%%%%%%%%%%%%%%%%%%%%%%%%%%%%%%%%%%%%%%%%%%%%%%%%%%
\begin{proof}
A translation invariant version of the frequency-domain representation (\ref{cPpos1})--(\ref{Gk}) for the steady-state covariances implies that
\begin{equation}
\label{Pjktrans}
    P_{jk} + i\delta_{jk} \Theta
    =
    \frac{1}{2\pi}
    \int_{-\infty}^{+\infty}
    F(i\lambda) G(i\lambda)^{j-1}
    \Omega
    \big(
        F(i\lambda) G(i\lambda)^{k-1}
    \big)^*
    \rd \lambda
\end{equation}
for all $j,k=1, 2, \ldots$. By using the spectral radius $\lambda_{\max}(\Omega)=2$ of the quantum Ito matrix $\Omega$ from (\ref{YYk}) and submultiplicativity  of the $\cH_{\infty}$-norm, it follows from (\ref{Pjktrans}) that
 \begin{align}
 \nonumber
    P_{kk} + i\Theta
    = &
    \frac{1}{2\pi}
    \int_{-\infty}^{+\infty}
    F(i\lambda) G(i\lambda)^{k-1}\Omega (G(i\lambda)^{k-1})^* F(i\lambda)^*
    \rd \lambda\\
 \label{Pkkup}
    \preccurlyeq  &
    \frac{1}{\pi}
    \int_{-\infty}^{+\infty}
    F(i\lambda) F(i\lambda)^*
    \rd \lambda
    \|G\|_{\infty}^{2(k-1)}.
 \end{align}
 The inequality (\ref{Pkkupper}) can now be obtained by taking the trace on both sides of (\ref{Pkkup}) in view of the antisymmetry of the CCR matrix $\Theta$.
\end{proof}
%%%%%%%%%%%%%%%%%%%%%%%%%%%%%%%%%%%%%%%%%%%%%%%%%%%%%%%%%%%%%%%%%%%%%%%%%%%%%%%%%%%%%%%%%%%%%%%%%%%%%%%%

 In accordance with (\ref{Gprodnorm}),  the $\cH_{\infty}$-norm of the transfer function $G$ of an individual oscillator in (\ref{G}) satisfies $\|G\|_{\infty}\> 1$. Therefore, (\ref{Pkkupper}) can only bound the exponential growth of the steady-state covariances for distant oscillators along the cascade. Also note that, since the operator norms of the matrices $P_{jj}$, $P_{jk}$, $P_{kk}$ satisfy
 $$
    \|P_{jk}\|_{\infty} \< \sqrt{\|P_{jj}\|_{\infty}\|P_{kk}\|_{\infty}}
 $$
 (due to the positive semi-definiteness which the matrix $\small{\begin{bmatrix}P_{jj} & P_{jk}\\ P_{kj} & P_{kk}\end{bmatrix}}$ inherits from $\cP_N$ in (\ref{cPN})),  the upper bound (\ref{Pkkupper}) ensures absolute convergence of the series in (\ref{cPzv}) for all $z$ and $v$ satisfying (\ref{zGbig}), that is,   $|z|, |v|> \|G\|_{\infty}$.

\end{document}